\sloppy \pagestyle{plain} \textwidth=13.4cm \textheight=21cm
\makeatletter \@addtoreset{equation}{section} \makeatother
\newcommand{\muu}{{\boldsymbol{\mu}}}
\newcommand{\lcm}{\operatorname{lcm}}
\newcommand{\red}{\operatorname{red}}
\newcommand{\Pic}{\operatorname{Pic}}
\newcommand{\diff}{\operatorname{d}}
\newcommand{\ord}{\operatorname{ord}}
\newcommand{\Supp}{\operatorname{Supp}}
\newcommand{\Cl}{\operatorname{Cl}}
\newcommand{\ct}{\operatorname{ct}}
\newcommand{\ov}[1]{\overline{#1}}
\newcommand{\B}{{\mathbf B}}
\newcommand{\CC}{\mathbb{C}}
\newcommand{\QQ}{\mathbb{Q}}
\newcommand{\ZZ}{\mathbb{Z}}
\newcommand{\PP}{\mathbb{P}}
\newcommand{\OOO}{{\mathscr{O}}} 
\newcommand{\MMM}{{\mathscr{M}}} 
\newcommand{\EEE}{{\mathscr{E}}} 
\renewcommand{\emptyset}{\varnothing}
\newcounter{NO}
\newcommand{\qW}{\operatorname{qW}}
\newcommand{\qQ}{\operatorname{q\QQ}}
\newcommand{\qq}{\mathbin{\sim_{\scriptscriptstyle{\QQ}}}}
\newcommand{\comment}[1]{}
\newcommand{\xref}[1]{{\rm \ref{#1}}}
\newtheorem{theorem}[equation]{Theorem}
\newtheorem{proposition}[equation]{Proposition}
\newtheorem{lemma}[equation]{Lemma}
\newtheorem{corollary}[equation]{Corollary}
\newtheorem{claim}[equation]{Claim}
\theoremstyle{definition}
\newtheorem{remark}[equation]{Remark}
\newtheorem{case}[equation]{}
\title{$\QQ$-Fano threefolds of large Fano index, I}
\author{Yuri Prokhorov}
\thanks{The author was partially supported by 
the Russian Foundation for Basic Research
(grants
No~
06-01-72017-MNTI\_a,
08-01-00395-a) and Leading 
Scientific Schools (grants No NSh-1983.2008.1, NSh-1987.2008.1)
}
\address{Department 
of Algebra, Faculty of Mathematics, Moscow State
University, Moscow 117234, Russia}
\email{prokhoro@gmail.com}
\begin{document}
\begin{abstract}
We study $\QQ$-Fano threefolds of large Fano index.
In particular, we prove that the maximum of Fano index is attained 
for the weighted projective space $\PP(3,4,5,7)$.
\end{abstract}
\maketitle

\section{Introduction}
The Fano index of
a smooth Fano variety $X$ is the maximal integer $\operatorname{q}(X)$ 
that divides the anti-canonical class in the Picard group $\Pic(X)$
\cite{Iskovskikh-Prokhorov-1999}.
It is well-known \cite{Kobayashi-Ochiai-1973} that $\operatorname{q}(X)\le \dim X+1$. Moreover,
$\operatorname{q}(X)=\dim X+1$ if and only if $X$ is a projective space and
$\operatorname{q}(X)=\dim X$ if and only if $X$ is a quadric hypersurface.
In this paper we consider generalizations of Fano index
for the case of singular Fanos admitting terminal singularities.

A normal projective variety $X$ is called \textit{Fano} if some positive multiple 
$-nK_X$ of its anti-canonical Weil divisor is Cartier and ample.
Such $X$ is called a \textit{$\QQ$-Fano variety} if it has only 
terminal $\QQ$-factorial singularities and its Picard number is one.
This class of Fano varieties is important because they 
appear naturally in the Minimal Model Program.

For a singular Fano variety $X$ 
the Fano index can be defined in different ways. For example, we can define
\[
\begin{array}{lll}
\qW(X)&:=& \max \{ q \mid -K_X\sim q A,\quad \hbox to 0.42
\textwidth {\ \text{$A$ is a Weil $\QQ$-Cartier divisor}} \},
\\[8pt]
\qQ(X)&:=& \max \{ q \mid -K_X\qq q A, \quad \raise 2.9pt
\hbox to 0.42\textwidth {\quad \hrulefill \textquotestraightdblbase \hrulefill\quad }\}.
\end{array}
\]
If $X$ has at worst log terminal singularities, then 
the Picard group $\Pic(X)$ and Weil divisor class group $\Cl(X)$ 
are finitely generated and 
$\Pic(X)$ is torsion free 
(see e.g. \cite[\S 2.1]{Iskovskikh-Prokhorov-1999}).
Moreover, the numerical equivalence of
$\QQ$-Cartier divisors coincides with $\QQ$-linear one.
This implies, in particular, that defined above 
Fano indices $\qW(X)$ and $\qQ(X)$ are positive integers.
If $X$ is smooth, these numbers 
coincide with the Fano index  $\operatorname{q}(X)$ 
defined above. 
Note also that $\qQ(X)=\qW(X)$ if the group $\Cl(X)$ is torsion free.

\begin{theorem}[{\cite{Suzuki-2004}}]
\label{th-suzuki}
Let $X$ be a $\QQ$-Fano thereefold. Then $\qW(X)\in \{1,\dots,11,13,17,19\}$.
All these values, except possibly for $\qW(X)=10$, occur.
Moreover, if $\qW(X)=19$, then the types of non-Gorenstein points and
Hilbert series of $X$ coincide with that of $\PP(3,4,5,7)$.
\end{theorem}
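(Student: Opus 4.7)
The plan is to combine Reid's orbifold Riemann--Roch formula with Kawamata--Viehweg vanishing to turn the Fano data into numerical constraints on $q:=\qW(X)$ and on the basket of terminal singularities $\mathcal B=\{(r_i,b_i)\}$. Let $A$ be a Weil $\QQ$-Cartier divisor with $-K_X\sim qA$. Reid's formula writes
\[
\chi(\OOO_X(mA))=\chi(\OOO_X)+\tfrac{1}{12}mA\cdot(mA-K_X)(2mA-K_X)+\tfrac{1}{12}mA\cdot c_2(X)+\sum_P c_P(mA),
\]
where $c_P(mA)$ depends only on the type $(r_i,b_i)$ and on $m\bmod r_i$.

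First I would extract integrality and vanishing. Since $(q+1)A$ is ample, Kawamata--Viehweg vanishing gives $H^i(\OOO_X(mA))=0$ for $i>0$ and $m\ge -q$, so $\chi(\OOO_X(mA))=h^0(\OOO_X(mA))\in\ZZ_{\ge 0}$; Serre duality handles $m\le -q$. In particular $h^0(\OOO_X(mA))=0$ for $-q\le m\le -1$, and $h^0(\OOO_X(mA))=\chi$ must be a non-negative integer for every $m$. Substituting the Reid formula then produces a system of linear equations and inequalities in the basket data, $A^3$, and $A\cdot c_2$, indexed by residues modulo each $r_i$. The Bogomolov--type inequality $A\cdot c_2>0$ for terminal Fanos together with positivity of $A^3$ further restricts the admissible configurations.

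Next I would carry out the case analysis on $q$. The upper bound $q\le 19$, and the exclusion of $q\in\{12,14,15,16,18\}$, follow by showing that the linear constraints above already have no solution $(\mathcal B, A^3, A\cdot c_2)$ in integers satisfying the positivity conditions: one sweeps over possible baskets compatible with the divisibility $r_i\mid (q-b_i')$ (for suitable local index $b_i'$) and checks that the Hilbert polynomial either fails integrality or forces $\chi(\OOO_X(mA))<0$ for some $m$. The main obstacle is the combinatorial size of this enumeration and the delicate cancellation between Riemann--Roch and the singularity contributions that can spuriously look feasible at one residue and fail at another; one must keep the search finite by exploiting the upper bound $\sum 1/r_i<$ const from $A\cdot c_2>0$.

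Finally, for $q=19$ one shows that the linear system has a unique solution: the basket must consist of one point of each index $3,4,5,7$ with the quadratic invariants forced, which produces the Hilbert series of $\PP(3,4,5,7)$. The \emph{existence} assertion for $q\in\{1,\ldots,9,11,13,17,19\}$ is settled by exhibiting explicit models, mostly as weighted complete intersections in weighted projective spaces (with $\PP(3,4,5,7)$ realising $q=19$); the value $q=10$ is left open because no such model is produced and the constraints do not by themselves exclude it. The hardest step throughout is the enumeration of baskets for the intermediate values of $q$, where many numerically admissible candidates appear and must be eliminated by evaluating Riemann--Roch at enough residues to detect non-integrality or negativity.
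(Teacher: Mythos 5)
Your proposal follows essentially the same route as the source: this theorem is quoted from Suzuki's paper and not reproved here, but the paper replicates exactly this method in Lemma~3.5 (orbifold Riemann--Roch, Kawamata--Viehweg vanishing forcing $\chi(mA)=h^0(mA)$ with vanishing for $-q<m<0$ and non-negativity for $m>0$, plus a finite computer sweep over baskets), supplemented by the coprimality $\gcd(q,r)=1$ and the integrality $rA^3\in\ZZ$ of Lemma~3.1 and the degree bound $-K_X^3\le 125/2$.

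One point needs correcting: you claim the search is kept finite by ``the upper bound $\sum 1/r_i<\text{const}$,'' but that inequality is vacuous for finiteness, since it neither bounds the indices $r_i$ from above nor bounds the number of basket points. The bound actually used (Theorem~2.3, from Kawamata and Koll\'ar--Miyaoka--Mori--Takagi) is $\sum_P\bigl(r_P-\tfrac1{r_P}\bigr)\le 24$, which does both. With that replacement your outline matches the paper's.
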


It can be easily shown (see proof of Proposition \ref{prop-comput}) 
that the index $\qQ(X)$ takes values in the same set $\{1,\dots,11,13,17,19\}$. 
Thus one can expect that $\PP(3,4,5,7)$ is the only example 
of $\QQ$-Fano threefols with $\qQ(X)=19$. In general,
we expect that Fano varieties with extremal properties (maximal degree,
maximal Fano index, etc.) are quasihomogeneous with respect to 
an action of some connected algebraic group. This is supported, for example, 
by the following facts:

\begin{theorem}[\cite{Prokhorov-2005a}, {\cite{Prokhorov-2007-Qe}}]
\label{th-degree}
\begin{enumerate}
 \item 
Let $X$ be a $\mathbb Q$-Fano threefold. 
Assume that~$X$ is not Gorenstein.
Then $-K_X^3\le125/2$ and the equality holds 
if and only if~$X$ is isomorphic to the weighted projective space 
$\PP(1^3,2)$.

 \item 
Let $X$ be a Fano threefold with canonical Gorenstein singularities.
Then $-K_X^3\le 72$ and the equality holds 
if and only if~$X$ is isomorphic to 
$\PP(1^3,3)$ or $\PP(1^2,6, 4)$.
\end{enumerate}
\end{theorem}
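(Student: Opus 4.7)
My plan is to tackle both parts through Reid's orbifold Riemann--Roch formula together with the Kawamata--Viehweg vanishing theorem. Combined, these produce an explicit formula for the Hilbert series $P_X(t):=\sum_{n\ge 0}h^0(\OOO_X(-nK_X))\,t^n$ as a rational function determined by $(-K_X)^3$, $(-K_X)\cdot c_2(X)$, and the basket of terminal cyclic quotient singularities of $X$. Bounding $(-K_X)^3$ then becomes a matter of ensuring $P_n\ge 0$ for all $n\ge 0$ subject to the constraints imposed by the basket.

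For part (ii), Gorenstein canonical singularities make $-K_X$ Cartier and Reid's formula collapses to the classical version
\[
h^0(\OOO_X(-nK_X))=\tfrac{n(n+1)(2n+1)}{12}(-K_X)^3+\tfrac{n}{12}(-K_X)\cdot c_2(X)+1.
\]
Using $(-K_X)\cdot c_2(X)\ge 0$ (Kawamata, for canonical Fano threefolds), I would study the anticanonical morphism $\varphi:=\varphi_{|-K_X|}$: a large value of $(-K_X)^3$ forces $\varphi(X)$ to have a degree and codimension incompatible with Fujita's classification of polarized varieties of small $\Delta$-genus, ultimately yielding $(-K_X)^3\le 72$. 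Equality forces the Hilbert series to coincide with that of $\PP(1,1,1,3)$ or $\PP(1^2,4,6)$; an analysis of generators of the anticanonical ring then identifies $X$ with one of these.

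For part (i), $-K_X$ is only $\QQ$-Cartier, and Reid's formula contributes basket corrections $\sum_Q c_Q(n)$ from each non-Gorenstein cyclic quotient singularity. Writing $-K_X\qq qA$ with $q=\qW(X)$ and invoking the finite list of admissible values from Theorem~\ref{th-suzuki}, one enumerates the admissible baskets for each $q$. Imposing $P_n\ge 0$ eliminates most candidates, and a direct calculation shows that the extremum $(-K_X)^3=125/2$ occurs only for the basket consisting of a single point of type $\tfrac12(1,1,1)$ with $q=5$, giving the Hilbert series of $\PP(1^3,2)$. Matching generators and relations in the anticanonical ring then pins $X$ down as $\PP(1^3,2)$ itself.

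The hard part in both cases is the combinatorial enumeration of admissible baskets and the verification that the extremal Hilbert series genuinely lifts to the claimed weighted projective space. The enumeration is finite but extensive, and one must carefully rule out configurations that formally satisfy $P_n\ge 0$ but fail to be realized geometrically. Uniqueness in the equality cases is also nontrivial: agreement of Hilbert series must be promoted to an isomorphism by exhibiting explicit graded generators and relations, which is not formal from the numerics alone.
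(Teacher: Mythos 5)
This statement is not proved in the paper at all: it is quoted as Theorem~\ref{th-degree} from \cite{Prokhorov-2005a} and \cite{Prokhorov-2007-Qe}, so there is no internal proof to compare against; your proposal has to be measured against the actual arguments in those references, and there it has a genuine gap. For part (i) the central difficulty is that your scheme only controls $(-K_X)^3$ when the Fano index is large. The identity \eqref{eq-comput-L3} determining $A^3$ from the basket uses $\chi(-A)=0$, hence $\qW(X)\ge 2$, and even then the purely numerical constraints (positivity of plurigenera, $\sum_P(r_P-1/r_P)\le 24$, $(-K_X)\cdot c_2\ge 0$, integrality of $rA^3$) give bounds on $(-K_X)^3$ substantially weaker than $125/2$; for the generic case $\qW(X)=1$ the degree is not determined by the basket at all and no finite enumeration of baskets closes the argument. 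The bound $125/2$ in \cite{Prokhorov-2007-Qe} is obtained by genuinely geometric means --- essentially the machinery reproduced in \S\xref{sect-bir-constr} of the present paper: one blows up a non-Gorenstein point by the Kawamata blowup, runs the MMP to produce a Sarkisov link to a ``simpler'' Mori fiber space, and extracts the degree bound from that structure. Your sentence ``imposing $P_n\ge 0$ eliminates most candidates, and a direct calculation shows that the extremum occurs only for the basket $(\frac12(1,1,1))$ with $q=5$'' is precisely the step that does not go through.

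Part (ii) has a related problem. The $\Delta$-genus of $(X,-K_X)$ is $\Delta=3+(-K_X)^3-h^0(-K_X)=g-1$ with $g=\frac12(-K_X)^3+1$, so it \emph{grows} with the degree; there is no conflict with Fujita's classification of small $\Delta$-genus, and varieties of minimal degree exist in every degree, so ``degree and codimension incompatible with Fujita's classification'' cannot be the mechanism. In \cite{Prokhorov-2005a} the bound $72$ comes from the general elephant (a Du Val K3 surface $S\in|-K_X|$), Saint-Donat's analysis of $\varphi_{|-K_X|}|_S$, and a case analysis of the anticanonical map (non-free, hyperelliptic, trigonal, image a cone, etc.); the extremal cases $\PP(1^3,3)$ and $\PP(1^2,4,6)$ arise exactly in the degenerate branches of that analysis, not from matching Hilbert series. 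Your closing caveat that ``agreement of Hilbert series must be promoted to an isomorphism'' correctly flags a further unproved step, but the more basic issue is that in both parts the inequality itself is not a consequence of the Riemann--Roch numerics you propose to use.
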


The following proposition is well-known (see, e.g., \cite{Borisov-Borisov}).
It is an easy exercise for experts in toric geometry.

\begin{proposition}
\label{proposition-main-toric}
Let $X$ be a toric $\QQ$-Fano $3$-fold. Then $X$ is isomorphic 
to either $\PP^3$, $\PP^3/\muu_5(1,2,3)$, or one of the following weighted projective spaces:
\par\medskip\noindent
$\PP(1^3,2)$,\hspace{4pt}
$\PP(1^2,2,3)$,\hspace{4pt}
$\PP(1,2,3,5)$,\hspace{4pt}
$\PP(1,3,4,5)$,\hspace{4pt}
$\PP(2,3,5,7)$,\hspace{4pt}
$\PP(3,4,5,7)$.
\end{proposition}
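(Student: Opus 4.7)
The plan is to use the standard dictionary between toric $\QQ$-factorial Fano $3$-folds of Picard number one and lattice tetrahedra in $N_{\mathbb{R}} \cong \mathbb{R}^3$, and then to enumerate those yielding terminal singularities.

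First, a $\QQ$-factorial complete toric $3$-fold of Picard number one has a fan whose four rays are generated by primitive vectors $v_0, v_1, v_2, v_3 \in N \cong \ZZ^3$ satisfying a unique primitive relation $a_0 v_0 + a_1 v_1 + a_2 v_2 + a_3 v_3 = 0$ with coprime positive integers $a_i$. Thus $X$ is the fake weighted projective space $\PP(a_0, a_1, a_2, a_3)/G$, where $G := N/\langle v_0, v_1, v_2, v_3 \rangle$ acts diagonally and freely in codimension one; the associated polytope is the tetrahedron $P := \mathrm{conv}(v_0, v_1, v_2, v_3)$ with $0$ in its interior. By Reid's classification of terminal cyclic-quotient $3$-fold singularities as $\tfrac{1}{r}(1, -1, a)$ with $\gcd(a, r) = 1$, terminality of $X$ at the $i$-th torus fixed point is equivalent to the sub-tetrahedron $\mathrm{conv}(0, v_0, \ldots, \widehat{v_i}, \ldots, v_3)$ containing only its four vertices as lattice points. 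Since these four sub-tetrahedra tile $P$, terminality of $X$ is equivalent to the \emph{empty tetrahedron} condition $P \cap N = \{0, v_0, v_1, v_2, v_3\}$.

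The classification then reduces to a finite Diophantine enumeration. Ordering $a_0 \leq a_1 \leq a_2 \leq a_3$, the terminality at the vertex indexed by $a_3$ forces two of the smaller weights, after multiplication by a unit mod $a_3$, to sum to zero modulo $a_3$; combined with the primitive relation this caps $a_3$ in terms of $a_0, a_1, a_2$. Iterating at the remaining three vertices, and imposing the well-formedness conditions $\gcd(a_i, a_j, a_k) = 1$ for distinct $i, j, k$, yields the seven weight-tuples
\[
(1,1,1,1),\ (1,1,1,2),\ (1,1,2,3),\ (1,2,3,5),\ (1,3,4,5),\ (2,3,5,7),\ (3,4,5,7).
\]
A case-by-case inspection of the admissible sublattices $\langle v_0, v_1, v_2, v_3\rangle \subseteq N$ then shows that only the weight $(1,1,1,1)$ supports a non-trivial $G$, namely $G = \muu_5$ acting by $(1,2,3,-1)$, producing $\PP^3/\muu_5(1,2,3)$. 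Suzuki's bound $\qW(X) \leq 19$ from Theorem~\ref{th-suzuki} caps $\sum a_i/|G|$ a priori and serves as a convenient sanity check.

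The main obstacle I expect is the bookkeeping in the last step: systematically verifying the empty-tetrahedron condition across all candidate weight-tuples and all candidate sublattices $\langle v_0,\ldots,v_3\rangle \subseteq N$, and conversely confirming that each listed variety actually occurs as $X_P$ for some such $P$. In practice this case analysis is routine, which is why experts regard the proposition as a standard exercise in toric geometry.
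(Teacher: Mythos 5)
The paper does not actually prove Proposition~\ref{proposition-main-toric}: it is stated as well known, with a pointer to \cite{Borisov-Borisov} and the remark that it is an exercise in toric geometry. Your plan is precisely the standard argument underlying that citation --- identify $X$ with a fake weighted projective space attached to a lattice tetrahedron $P=\mathrm{conv}(v_0,\dots,v_3)$ having the origin as interior point, translate terminality of each torus-fixed point into emptiness of the corresponding sub-tetrahedron, hence of $P$ itself, and enumerate --- and each step you state is correct. Two small caveats on the sketch. First, the termination of the Diophantine search is asserted a bit too quickly: terminality at the vertex of largest weight alone does not cap $a_3$ (for instance $(1,a_3-1,b,a_3)$ passes that single test for every $a_3$); finiteness comes from imposing terminality at all four vertices together with well-formedness, or, more cheaply, from the observation you demote to a ``sanity check'' --- since $\qW(\PP(a_0,\dots,a_3))=\sum a_i$ for a well-formed weighted projective space, Theorem~\ref{th-suzuki} (proved by orbifold Riemann--Roch, independently of this proposition) gives $\sum a_i\le 19$ outright and makes the enumeration trivially finite. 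Second, the claim that only the weight $(1,1,1,1)$ admits a nontrivial $G$, with $G=\muu_5$ acting by $(1,2,3,4)$, is stated without argument and does require a separate check of the admissible overlattices for each of the seven weight systems. Neither point is a mathematical gap --- both are exactly the routine bookkeeping you flag at the end --- so your route is correct and coincides with the one the paper delegates to the literature.
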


We characterize the weighted projective spaces above 
in terms of Fano index.
The following is the main result of this paper.

\begin{theorem}
\label{theorem-main}
Let $X$ be a $\QQ$-Fano threefold. Then $\qQ(X)\in \{1,\dots,11,13,17,19\}$.
\begin{enumerate}
\item\label{theorem-main-19}
If $\qQ(X)=19$, then $X\simeq \PP(3,4,5,7)$.
\item\label{theorem-main-17}
If $\qQ(X)=17$, then $X\simeq \PP(2,3,5,7)$.

\item\label{theorem-main-13}
If $\qQ(X)=13$ and $\dim |-K_X| >5$, then $X\simeq \PP(1,3,4,5)$.

\item\label{theorem-main-11}
If $\qQ(X)=11$ and $\dim |-K_X| >10$, then $X\simeq \PP(1,2,3,5)$.

\item\label{theorem-main-10}
$\qQ(X)\neq 10$.

\item\label{theorem-main-q7}
If $\qQ(X)\ge 7$ and 
there are two effective Weil divisors $A\neq A_1$ such that
$-K_X\qq \qQ(X)A\qq \qQ(X)A_1$, then $X\simeq \PP(1^2,2,3)$.

\item\label{theorem-main-q5}
If $\qW(X)=5$ and $\dim |-\frac15K_X| >1$, then $X\simeq \PP(1^3,2)$.
\end{enumerate}
\end{theorem}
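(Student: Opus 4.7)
The plan is to analyze each case via Reid's orbifold Riemann--Roch formula applied to a Weil divisor $A$ satisfying $-K_X\qq\qQ(X)A$. By Kawamata--Viehweg vanishing one has $h^0(\OOO_X(nA))=\chi(\OOO_X(nA))$ for $n\ge 1-\qQ(X)$, and Reid's formula expresses $\chi$ as a polynomial in $n$ determined by the intersection number $A^3$, the product $A\cdot c_2(X)$, and orbifold corrections indexed by the basket $\{\tfrac{1}{r_p}(a_p,r_p-a_p,1)\}$ of terminal cyclic quotient points. The first step is the index bound $\qQ(X)\in\{1,\dots,11,13,17,19\}$ from Proposition~\ref{prop-comput}, obtained by running Suzuki's local divisibility argument for $\qW$ in the slightly weaker setting of $\QQ$-linear equivalence.

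For parts \xref{theorem-main-19}--\xref{theorem-main-10} the strategy is basket enumeration. Fixing $q=\qQ(X)$, the relation $qA\qq -K_X$ imposes at each basket point a local divisibility condition of the form $qa_p\equiv -1\pmod{r_p}$, narrowing the candidate baskets to a short list. Feeding each candidate into the orbifold Riemann--Roch formula produces a candidate Hilbert series $P_A(t)=\sum_{n\ge 0}h^0(\OOO_X(nA))\,t^n$; the non-negativity of $h^0(\OOO_X(nA))$ for small $n$, the integrality constraints $\chi(\OOO_X)=1$ and $-K_X\cdot c_2(X)\in 24\ZZ$, and the positivity of $A^3$ eliminate almost all candidates. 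For $q=19,17$ the basket is forced uniquely to be that of $\PP(3,4,5,7)$ respectively $\PP(2,3,5,7)$, giving \xref{theorem-main-19}, \xref{theorem-main-17}. In \xref{theorem-main-13}, \xref{theorem-main-11} the additional assumptions $\dim|-K_X|>5$ and $\dim|-K_X|>10$ eliminate the surviving competitors that have smaller $h^0(-K_X)$. For $q=10$ no basket satisfies all constraints simultaneously, proving \xref{theorem-main-10}. Once the basket and Hilbert series are fixed, the graded ring $R(X,A)=\bigoplus_{n\ge 0}H^0(\OOO_X(nA))$ is generated in degrees matching the weights $(a_0,\dots,a_3)$ of the target weighted projective space, producing the isomorphism.

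Parts \xref{theorem-main-q7} and \xref{theorem-main-q5} refine the section-counting. In \xref{theorem-main-q7}, the two effective divisors $A\neq A_1$ with $A\qq A_1$ mean either $\dim|A|\ge 1$ (if $A\sim A_1$) or $\Cl(X)$ carries torsion (if $A\not\sim A_1$); in the torsion case a nontrivial global $\muu_n$-cover supplies extra sections, and together with $\qQ(X)\ge 7$ the basket analysis forces the Hilbert series of $\PP(1^2,2,3)$. In \xref{theorem-main-q5}, $\dim|A|>1$ with $5A\sim -K_X$ yields $h^0(A)\ge 3$, which is compatible with the basket analysis for $\qW(X)=5$ only for the basket of $\PP(1^3,2)$; sections in degrees $1,1,1,2$ then generate the anticanonical ring and realize the embedding. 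The main obstacle throughout is the basket enumeration: the candidate lists for each value of $q$ are long, every basket requires a separate numerical treatment, and the non-existence step for $q=10$ in \xref{theorem-main-10} is particularly delicate, because a single overlooked basket would produce a genuine new $\QQ$-Fano threefold.
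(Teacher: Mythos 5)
Your proposal has two genuine gaps, and they are precisely where the real content of the theorem lies. First, the claim that for $q=10$ ``no basket satisfies all constraints simultaneously'' is false: the basket $\B=(7,11)$ with $A^3=2/77$ passes every test you list (local divisibility, positivity of $A^3$, non-negativity and vanishing of the relevant $\chi(nA)$, the $c_2$ constraints) --- it appears as a surviving row in the paper's Proposition~\xref{prop-comput}. Excluding $q=10$ is not a Riemann--Roch computation; the paper devotes an entire section to it, running a Sarkisov link (Kawamata blowup of a non-Gorenstein point followed by a $(K+c\MMM)$-MMP) on the mobile system $\MMM=|3A|$ and deriving a contradiction from the numerics of the resulting link in every subcase. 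Second, the step ``once the basket and Hilbert series are fixed, the graded ring $R(X,A)$ is generated in degrees matching the weights, producing the isomorphism'' is exactly the assertion that needs proof and is not automatic. Suzuki's theorem (quoted as Theorem~\xref{th-suzuki}) already shows that for $q=19$ the basket and Hilbert series coincide with those of $\PP(3,4,5,7)$; the whole point of the present theorem is to upgrade this to an isomorphism. Matching Hilbert series does not by itself control the relations or even the generators of $R(X,A)$.

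The paper closes this gap by birational geometry rather than graded rings: in each case it constructs the Sarkisov link~\eqref{eq3.1}, identifies the target $\hat X$ (e.g.\ $\PP(1^2,2,3)$ or $\PP(1^3,2)$ via parts \xref{theorem-main-q7}, \xref{theorem-main-q5}, which are proved first by independent means --- Fujita's $\Delta$-genus for $q=5$ and Sano's classification plus a covering argument for $q\ge 7$), shows that the relevant boundary $S_1+S_3+S_4+S_5$ (etc.) is log canonical using an intersection-theoretic lemma and the Connectedness Lemma, transports a toric boundary back through the link, and invokes McKernan's characterization (Lemma~\xref{lemma-birational-toric-pairs-mck}) together with Proposition~\xref{proposition-main-toric} to conclude $X$ is the stated weighted projective space. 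None of this machinery appears in your outline. Your treatment of parts \xref{theorem-main-q7} and \xref{theorem-main-q5} is closer to workable (the covering trick and the section count are the right first moves), but even there you must still show the \'etale-in-codimension-one cover is trivial, which the paper does via the torsion constraints of Proposition~\xref{lemma-torsion}. As written, your argument would at best reprove Suzuki's numerical classification, not the isomorphism statements.
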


Note that in cases \ref{theorem-main-13} and \ref{theorem-main-11}
assumptions about $\dim |-K_X|$ are needed. Indeed, 
there are examples of non-toric $\QQ$-Fano threefolds 
with $\qQ(X)=13$ and $11$.

In the  proof we follow the use some techniques developed in our previous paper
\cite{Prokhorov-2007-Qe}.
By Proposition \ref{proposition-main-toric} it is sufficient to
show that our $\QQ$-Fano $X$ is toric. 
First, as in \cite{Suzuki-2004}, we apply the orbifold Riemann-Roch
formula to find all the possibilities for the numerical 
invariants of $X$.
In all cases there is some special element $S\in |-K_X|$
having four irreducible components.
This $S$ should be a toric boundary, if $X$ is toric.
Further, we use birational transformations like 
Fano-Iskovskikh ``double projection'' \cite{Iskovskikh-Prokhorov-1999}
(see \cite{Alexeev-1994ge} for the $\QQ$-Fano version).
Typically the resulting variety is
a Fano-Mori fiber space having ``simpler'' structure.
(In particular, its Fano index is large if this variety is a $\QQ$-Fano).
By using properties of our ``double projection'' 
we can show that the pair $(X,S)$ is log canonical (LC).
Then,
in principle, the assertion follows by 
Shokurov's toric conjecture \cite{McKernan-2001}.
We prefer to propose an alternative, more explicit proof.
In fact, the image of $X$ under ``double projection''
is a toric variety and the inverse map preserves the toric structure. 
In the last section we describe Sarkisov links between toric $\QQ$-Fanos
that starts with blowing ups  \textit{singular} points.

\subsection*{Acknowledgements}
The work was conceived during the authors stay 
at the University of Warwick in the spring of 2008.
The author would like to thank Professor M. Reid for
invitation, hospitality and fruitfull discussions. 
Part of the work was done at 
Max-Planck-Institut f\"ur Mathematik, Bonn in August 2008.

\section{Preliminaries, the orbifold Riemann-Roch formula and its applications}
\label{sect-2}
\subsection*{Notation}
Throughout this paper, we work over the complex number field $\CC$.
We employ the following standard notation:


$\sim$\quad denotes the linear equivalence;

$\qq$\quad denotes the $\QQ$-linear equivalence.

Let $E$ be a rank one discrete valuation of the function field $\CC(X)$ and let $D$ is a $\QQ$-Cartier 
divisor on $X$. $a(E,D)$ \ 
denotes the discrepancy of $E$ with respect to a boundary $D$.
Let $f\colon \tilde X\to X$ be a birational morphism such that
$E$ appears as a prime divisor on $\tilde X$. Then 
$\ord_E(D)$ denotes the coefficient of $E$ in $f^*D$.

\begin{case} \textbf{The orbifold Riemann-Roch formula \cite{Reid-YPG1987}.}
Let $X$ be a threefold with terminal singularities 
and let $D$ be a Weil $\QQ$-Cartier divisor on $X$.
Let $\B=\{ (r_P,b_P)\}$ be the basket of singular points of $X$ \cite{Mori-1985-cla}, \cite{Reid-YPG1987}.
Here each pair $(r_P,b_P)$ correspond to a point $P\in \B$ of type $\frac1{r_P}(1,-1,b_P)$.
For brevity, describing a basket we will list just indices of singularities, 
i.e., we will write $\B=\{ r_P\}$ instead of $\B=\{ (r_P,b_P)\}$.
In the above situation, the Riemann-Roch formula has the following form
\begin{multline}
\label{eq-RR}
\chi(D)=
\frac1{12}D\cdot (D-K_X)\cdot (2D-K_X)+
\\
+\frac1{12}D\cdot c_2+\sum_{P\in \B} c_P(D)+ 
\chi(\OOO_X),
\end{multline}
where 
\[
c_P(D)=-i_P\frac{r_P^2-1}{12r_P}+\sum_{j=1}^{i_P-1}\frac{\ov{b_Pj}(r_P-\ov{b_Pj})}{2r_P}.
\]
Clearly, computing $c_P(D)$, we always may assume that $1\le b_P\le r_P/2$.
\end{case}

\begin{case}
\label{not-chi}
Now let $X$ be a Fano threefold with terminal singularities, 
let $q:=\qQ(X)$, and
let $A$ be an ample Weil $\QQ$-Cartier divisor on $X$ such that $-K_X\qq qA$. 
By \eqref{eq-RR} we have
\begin{equation}
\label{eq-comput-chi}
\chi(tA)=1+\frac{t(q+t)(q+2t)}{12}A^3+
\frac{tA\cdot c_2}{12}
+ \sum_{P\in \B} c_P(tA),
\end{equation}
\[
c_P(tA)=
-i_{P, t} \frac{r_P^2-1}{12r_P}+\sum_{j=1}^{i_{P, t}-1}
\frac{\ov{b_Pj}(r_P-\ov{b_Pj})}{2r_P}.
\]
If $q>2$, then $\chi(-A)=0$. Using this equality
we obtain (see \cite{Suzuki-2004})
\begin{equation}
\label{eq-comput-L3}
A^3=\frac{12}{(q-1)(q-2)}\left( 
1-\frac{A\cdot c_2}{12}+\sum_{P\in B} c_P(-A)
\right).
\end{equation}
\end{case}

In the above notation,
applying \eqref{eq-RR}, Serre duality and Kawamata-Viehweg vanishing 
to $D=K_X$, we get the following important equality (see, e.g., \cite{Reid-YPG1987}):
\begin{equation}
\label{eq-RR-O}
24=-K_X\cdot c_2(X)+\sum_{P\in \B} \left( r_P-\frac 1{r_P}\right).
\end{equation}
\begin{theorem}[{\cite{Kawamata-1992bF}}, {\cite{KMMT-2000}}]
\label{th-Kawamata-Kc}
In the above notation, 
\begin{equation}
\label{eq-bogomolov}
-K_X\cdot c_2(X)\ge 0,\qquad \sum_{P\in \B} \left( r_P-\frac 1{r_P}\right)\le 24.
\end{equation}
\end{theorem}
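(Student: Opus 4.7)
First observe that the two inequalities are in fact equivalent via equation \eqref{eq-RR-O}: since $24 = -K_X\cdot c_2(X)+\sum_{P\in\B}(r_P - 1/r_P)$, the bound $-K_X\cdot c_2(X)\ge 0$ holds if and only if $\sum_{P\in\B}(r_P - 1/r_P)\le 24$. It therefore suffices to establish the first inequality.

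In the smooth case the result is elementary: with $\B=\emptyset$, equation \eqref{eq-RR-O} reduces to $-K_X\cdot c_2(X)=24$, which also follows directly from Todd's formula $\chi(\OOO_X)=c_1(T_X)\cdot c_2(T_X)/24$ combined with Kodaira vanishing $h^i(\OOO_X)=0$ for $i>0$. For the general terminal case I would follow the strategy of Kawamata and Koll\'ar--Miyaoka--Mori--Takagi. Mori's bend-and-break applied to a $K_X$-negative extremal ray supplies a minimal covering family $\{C_t\}$ of rational curves with bounded anticanonical degree $-K_X\cdot C_t\le 6$. Passing to a resolution $\pi\colon\tilde X\to X$ with $K_{\tilde X}=\pi^*K_X+\sum a_iE_i$, $a_i>0$ by terminality, one obtains deformations $\tilde C_t$ whose normalizations $f\colon\PP^1\to\tilde X$ avoid $\bigcup E_i$ for generic $t$. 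Each pull-back $f^*T_{\tilde X}$ is then semi-positive; a family-of-curves argument yields the non-negativity of $c_2(\tilde X)\cdot\pi^*(-K_X)$, and comparing $c_2(\tilde X)$ with the orbifold second Chern class of $X$ produces $-K_X\cdot c_2(X)\ge 0$.

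The main obstacle will be precisely the last step: the comparison of $c_2(\tilde X)$ with $c_2(X)$, equivalently, the sharp accounting for the singular contributions. Terminal singularities are mild but do contribute to $c_2$, and the orbifold correction terms $r_P-1/r_P$ must be controlled with no slack in order for the non-negativity to survive after descending to $X$. The bound is in fact tight: examples such as $\PP(3,4,5,7)$ (to be exploited extensively in what follows) realise $\sum(r_P-1/r_P)$ very close to $24$, so no loss is acceptable in the estimates and the full strength of the Mori-theoretic machinery of \cite{KMMT-2000} is needed.
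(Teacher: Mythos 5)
The paper does not prove this theorem at all: it is imported as a black box from \cite{Kawamata-1992bF} and \cite{KMMT-2000}, so there is no internal argument to compare yours against. Your opening reduction is correct and is the one genuinely useful observation in the proposal: given \eqref{eq-RR-O}, the two inequalities in \eqref{eq-bogomolov} are equivalent, so only $-K_X\cdot c_2(X)\ge 0$ needs proof, and the smooth case follows from $-K_X\cdot c_2(X)=24\chi(\OOO_X)=24$.

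The rest is a plan rather than a proof, and the step you defer is exactly where the content of the theorem lives, so as written there is a genuine gap. Concretely: the chain ``minimal covering family with bounded $-K_X\cdot C_t$, generic curves avoid $\bigcup E_i$, $f^*T_{\tilde X}$ semi-positive, hence $c_2(\tilde X)\cdot\pi^*(-K_X)\ge 0$'' does not go through, because semi-positivity of $f^*T_{\tilde X}$ along a covering family of rational curves controls only degree-one (slope) intersection numbers against those curves; it does not by itself bound $c_2\cdot H$. The actual argument of \cite{Kawamata-1992bF} runs through the semistability dichotomy for $T_X$ with respect to the polarization $-K_X$: in the semistable case Bogomolov's inequality gives $c_2(X)\cdot(-K_X)\ge\frac13 K_X^2\cdot(-K_X)>0$ directly, and in the unstable case one analyzes the maximal destabilizing subsheaf as a foliation using Miyaoka's results. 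Moreover, that argument is carried out on $X$ itself with the orbifold second Chern class, so the ``comparison of $c_2(\tilde X)$ with $c_2(X)$'' that you single out as the main obstacle is not an estimate to be controlled with no slack; the correction terms $r_P-1/r_P$ are already packaged into the orbifold $c_2$ via Reid's Riemann--Roch \eqref{eq-RR}, which is precisely how \eqref{eq-RR-O} is obtained. The honest status of your write-up is therefore: reduction and smooth case fine, singular case asserted but not proved. Since the paper itself only cites the references here, the appropriate course is either to do the same or to actually carry out the semistability/foliation argument rather than the rational-curves sketch.
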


\begin{proposition}
\label{lemma-torsion}
Let $X$ be a Fano thereefold with terminal 
singularities and let $\Xi$ be an $n$-torsion element in the Weil divisor
class group. 
Let $\B^\Xi$ be the collection of points $P\in \B$ where $\Xi$ is not Cartier.
Then
\begin{equation}
\label{eq-torsion-}
2=\sum_{P\in \B^{\Xi}}
\frac{\ov{b_Pi_{\Xi,P}}\ \left(r_P-\ov{b_Pi_{\Xi,P}}\right)}{2r_P}.
\end{equation}
where $i_{\Xi,P}$ is taken so that $\Xi\sim i_{\Xi,P}K_X$ near $P\in \B$
and $\ov{\phantom{rt}}$ is the residue $\mod r_P$.
Assume furthermore that $n$ is prime. Then
\begin{enumerate}
\item
$n\in \{2,3,5,7\}$.
\item
If $n=7$, then $\B^{\Xi}=(7,7,7)$.\footnote{More 
delicate computations show that this case does not occur.
(We do not need this.)}
\item
If $n=5$, then $\B^{\Xi}=(5,5,5,5)$, $(10,5,5)$, or $(10,10)$.
 \item 
If $n=3$, then $\sum_{P\in \B^{\Xi}} r_P=18$.
 \item 
If $n=2$, then $\sum_{P\in \B^{\Xi}} r_P=16$.
\end{enumerate}
\end{proposition}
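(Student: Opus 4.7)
The plan is to apply the orbifold Riemann--Roch formula \eqref{eq-RR} to each nontrivial multiple $k\Xi$ of $\Xi$, and to combine the resulting identities. Since $n\Xi\sim 0$ but $\Xi\not\sim 0$, for every $k$ with $1\le k\le n-1$ the class $k\Xi$ is still a nontrivial torsion element and hence not effective (an effective principal divisor must vanish), so $h^0(X,\pm k\Xi)=0$; moreover $\pm k\Xi-K_X\qq -K_X$ is ample, so Kawamata--Viehweg vanishing gives $H^i(X,\pm k\Xi)=0$ for $i>0$. Combined with $\chi(\OOO_X)=1$, this yields $\chi(\pm k\Xi)=0$. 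Inserting this into \eqref{eq-RR} and using $\Xi\qq 0$ to annihilate every intersection number involving $\Xi$, I obtain
\[
\sum_{P\in\B}c_P(k\Xi)=\sum_{P\in\B}c_P(-k\Xi)=-1.
\]

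For each $P$ I would then compute $c_P(k\Xi)+c_P(-k\Xi)$ directly. Set $f(j):=\overline{b_Pj}\bigl(r_P-\overline{b_Pj}\bigr)/(2r_P)$. The identity $\overline{b_P(r_P-j)}=r_P-\overline{b_Pj}$ makes $f(r_P-j)=f(j)$, and because $j\mapsto\overline{b_Pj}$ permutes $\{1,\dots,r_P-1\}$ one has $\sum_{j=1}^{r_P-1}f(j)=(r_P^2-1)/12$. A short manipulation of the defining sums for $c_P$ then gives
\[
c_P(k\Xi)+c_P(-k\Xi)=-f\bigl(k\,i_{\Xi,P}\bigr),
\]
with points outside $\B^\Xi$ contributing zero (there $i_{\Xi,P}\equiv 0\pmod{r_P}$). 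Summing over $P$ and specializing to $k=1$ gives \eqref{eq-torsion-}.

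For the prime case, localising $n\Xi\sim 0$ at $P\in\B^\Xi$ forces $n\mid r_P$; writing $r_P=ns_P$, $i_{\Xi,P}=s_Pm_P$ with $\gcd(m_P,n)=1$ and $\ell_P:=\overline{b_Pm_P}\in\{1,\dots,n-1\}$, the identity for $k\Xi$ rewrites as the system
\[
\sum_{P\in\B^\Xi}s_P\,\overline{k\ell_P}\bigl(n-\overline{k\ell_P}\bigr)=4n\qquad\bigl(k=1,\dots,\tfrac{n-1}{2}\bigr).
\]
Combined with the Kawamata bound $\sum_P(r_P-1/r_P)\le 24$ of Theorem~\ref{th-Kawamata-Kc}, this is a finite Diophantine problem. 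A direct check rules out every prime $n\ge 11$ (for $n\ge 13$ the bound leaves at most one admissible point and the equation has no solution; for $n=11$ at most two points, again no solution), leaving $n\in\{2,3,5,7\}$. For $n=2,3$ the factor $\ell(n-\ell)$ is forced to be $1$ or $2$ and the $k=1$ equation collapses to $\sum r_P=16$ and $18$ respectively. For $n=5,7$ one enumerates multisets of multiples of $n$ that satisfy the bound and the entire system.

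I expect the main obstacle to be this last enumeration, and in particular the use of the higher-$k$ equations to eliminate candidates that pass only the $k=1$ one: for instance, at $n=5$ the basket $\{5,5,5,5,5\}$ satisfies the $k=1$ equation (and even saturates the Kawamata bound), but the $k=1$ equation forces every $\ell_P\in\{1,4\}$, and then the $k=2$ equation yields $5\cdot 6=30\ne 20$, excluding this case; the remaining candidates are narrowed to the three baskets listed in (iii). The $n=7$ case is similar but tighter, as the Kawamata bound alone already singles out $(7,7,7)$ as the only multiset of multiples of $7$ admitting a solution to the $k=1$ equation.
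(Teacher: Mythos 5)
Your proof is correct, and for the key identity it is essentially the paper's argument in disguise: the paper pairs $\chi(\Xi)=0$ with $\chi(K_X+\Xi)=0$ and then invokes the identity $24=-K_X\cdot c_2+\sum(r_P-1/r_P)$, whereas you pair $\chi(k\Xi)=0$ with $\chi(-k\Xi)=0$ and let the local terms cancel directly; by Serre duality these are the same computation at $k=1$, and your verification that $c_P(k\Xi)+c_P(-k\Xi)=-f(ki_{\Xi,P})$ checks out. Where you genuinely diverge is in the prime case. First, you replace the paper's estimate (which as printed asserts $\sum r_P'\xi_P(n-\xi_P)\ge \frac{n^2}{4}\sum r_P'$ -- the inequality actually goes the other way, since $\xi(n-\xi)\le n^2/4$) by a direct finite check against the Kawamata bound $\sum(r_P-1/r_P)\le 24$; this is cleaner and reaches the same conclusion $n\le 7$. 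Second, and more substantively, you derive the whole family of equations $4n=\sum s_P\,\ov{k\ell_P}(n-\ov{k\ell_P})$ for $k=1,\dots,(n-1)/2$ rather than only $k=1$. This extra information is actually needed: for $n=5$ the multiset $\B^{\Xi}=(5,5,5,5,5)$ with all $\ell_P\in\{1,4\}$ satisfies the $k=1$ equation and exactly saturates the Kawamata bound, so the paper's stated tools do not eliminate it from list (iii); your $k=2$ equation ($30\ne 20$) does. So your argument buys a complete enumeration where the paper's ``the case $n=5$ is considered similarly'' quietly skips a surviving candidate (harmless for the paper's later applications, which only use the lower bounds on $\sum r_P$, but a genuine gap in the proof of (iii) as stated). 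The $n=2,3,7$ cases and the final counts $\sum r_P=16,18$ agree with the paper.
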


\begin{proof}
Let By Riemann-Roch \eqref{eq-RR}, Kawamata-Viehweg vanishing theorem and
Serre duality we have
\[
\begin{array}{lll}
0&=\chi(\Xi)&=1+\sum _P c_P(\Xi),
\\[10pt]
0&=\chi(K_X+\Xi)&=1+\frac1{12}K_X\cdot c_2(X)+\sum_{P\in \B} c_P(K_X+\Xi).
\end{array}
\]
Subtracting we get
\[
0=-\frac1{12}K_X\cdot c_2(X)+\sum_{P\in \B} (c_P(\Xi)-c_P(K_X+\Xi)).
\]
Since $ni_{\Xi,P}\equiv 0\mod r_P$,
\[
0=-\frac1{12}K_X\cdot c_2(X)+\frac {1}{12}
\sum_{P\in \B} \left( r_P-\frac{1}{r_P}\right) -
\sum_{P\in \B}
\frac{\ov{b_Pi_{\Xi,P}}\ \left(r_P-\ov{b_Pi_{\Xi,P}}\right)}{2r_P}.
\]
This proves \eqref{eq-torsion-}.

Now assume that $n$ is prime. 
If $P\in \B^\Xi$, then $n\mid r_P$. Write $r_P=nr'_P$. Since $r_P\mid ni_P$,
$i_P=r'_Pi'_P$, where $n \nmid i'_P$. 
Let $\ov{(\phantom{nm})_n}$ be the residue $\mod n$.
Then 
\[
2=\sum_{P\in \B^\Xi}
\frac{\ov{b_Pi_{\Xi,P}'r'}\ \left(nr'_P-\ov{b_Pi_{\Xi,P}'r'_P}\right)}{2nr_P'}=
\frac{r_P^{\prime}\ov{(b_Pi_{\Xi,P}')_n}\ 
\left(n-\ov{(b_Pi_{\Xi,P}')_n}\right)}{2n}.
\]
Therefore,
\begin{equation*}
\label{eq-torsion-2}
4n^2=\sum_{P\in \B^{\Xi}} r_P\ov{(b_Pi_{\Xi,P}')_n}\ \left(n-\ov{(b_Pi_{\Xi,P}')_n}\right).
\end{equation*}
Denote $\xi_P:=\ov{(b_Pi_{\Xi,P}')_n}$.
Then $0<\xi_P< n$, $\gcd(n, \xi_P)=1$, and
\[
4n=\sum_{P\in \B^{\Xi}} r_P'\xi_P \left(n-\xi_P\right)\ge \frac{n^2}{4}\sum_{P\in \B^{\Xi}} r_P',
\qquad 16\ge n\sum_{P\in \B^{\Xi}} r_P'.
\]
If $n\ge 11$, then $\sum r_P'=1$,  $n\mid r_P'$, 
and $r_P\ge n^2\ge 121$, a contradiction.
Therefore, $n\le 7$.
Consider the case $n=7$. Then 
$\xi_P\left(n-\xi_P\right)=6$, $10$, or $12$.
The only solution is $\B^{\Xi}=(7,7,7)$.
The case $n=5$ is considered similarly.
If  $n=3$, then $\xi_P \left(n-\xi_P\right)=3$ and $\sum r_P=3\sum r_P'=18$.
Similarly, if $n=2$, then $\xi_P \left(n-\xi_P\right)=1$ and $\sum r_P=2\sum r_P'=16$.
This finishes the proof.
\end{proof}

\section{Computations with Riemann-Roch on $\QQ$-Fano threefolds of large Fano index}
\begin{lemma}[see \cite{Suzuki-2004}]
\label{lemma-fanoindex}
Let $X$ be a Fano threefold with terminal singularities
with $q:=\qW(X)$, let $A:=-\frac1qK_X$, and let $r$ be the Gorenstein index of $X$.
Then
\begin{enumerate}
 \item 
$r$ and $q$ are coprime;
\item
$rA^3$ is an integer.
\end{enumerate}
\end{lemma}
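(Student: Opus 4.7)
My plan for (i) is a purely local argument at each non-Gorenstein point. At a terminal singularity $P \in \B$ of local index $r_P$, the local Weil divisor class group is cyclic of order $r_P$ and is generated by the class of $K_X$. Reducing the global linear equivalence $-K_X \sim qA$ modulo this local class group gives $q[A] = [-K_X]$ in $\ZZ/r_P$. Since $[-K_X]$ is a generator, the product $q[A]$ must be a unit modulo $r_P$, which forces $\gcd(q, r_P) = 1$ (and as a byproduct shows that $[A]$ is also a generator of $\ZZ/r_P$, so the local Cartier index of $A$ at $P$ is exactly $r_P$). Taking the least common multiple of the $r_P$ over $P \in \B$ yields $\gcd(q, r) = 1$.

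For (ii), the idea is to apply the orbifold Riemann--Roch formula \eqref{eq-comput-chi} to all integer multiples of $A$ and exploit a periodicity. Write
\[
\chi(tA) = f(t) + g(t) + 1, \qquad f(t) := \frac{t(t+q)(2t+q)}{12}A^3 + \frac{tA\cdot c_2}{12}, \qquad g(t) := \sum_{P \in \B} c_P(tA).
\]
The function $f$ is a cubic polynomial in $t$, while at each $P$ the contribution $c_P(tA)$ depends only on $i_{tA,P} \bmod r_P$ and is therefore periodic in $t$ of period $r_P$. Summing over the basket gives $g(t+r) = g(t)$.

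Since $\chi(tA) \in \ZZ$ for every $t \in \ZZ$, the cancellation of $g$ forces
\[
\chi\bigl((t+r)A\bigr) - \chi(tA) \;=\; f(t+r) - f(t) \;\in\; \ZZ\qquad \text{for every }t\in\ZZ.
\]
A direct expansion shows that $f(t+r) - f(t)$ is a quadratic polynomial in $t$ with leading coefficient $rA^3/2$. Since the second forward difference of any $\ZZ$-valued polynomial on $\ZZ$ is an integer, twice this leading coefficient lies in $\ZZ$, which is precisely the statement $rA^3 \in \ZZ$.

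The crux, and the only conceptual step, is matching the period of the basket contributions with the Gorenstein index $r = \lcm_P r_P$; once this observation is in hand, the classical integer-valued-polynomial criterion delivers (ii) immediately and the whole proof reduces to an elementary binomial expansion.
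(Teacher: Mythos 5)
Your argument is correct. Note first that the paper itself gives no proof of this lemma --- it is quoted from Suzuki's paper --- so there is nothing internal to compare against; your write-up is a legitimate self-contained proof along what is essentially the standard route. For (i), the fact you invoke is exactly the one the paper recalls later (in \S\xref{sect-bir-constr}): at a terminal threefold point $P$ of index $r_P$ the group of $\QQ$-Cartier Weil divisor classes modulo Cartier divisors is cyclic of order $r_P$ generated by $K_X$. Two small points of hygiene: you should say explicitly that $A$ is $\QQ$-Cartier (it is, because $qA\sim -K_X$ is), since the statement about the local class group concerns $\QQ$-Cartier Weil classes and not the full analytic class group (which can be infinite, e.g.\ at an ordinary double point --- harmless here since such points have $r_P=1$); and the order computation ($q[A]$ of order $r_P$ forces $\gcd(q,r_P)=1$) is exactly right. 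For (ii), the periodicity of $\sum_P c_P(tA)$ in $t$ with period $r=\lcm r_P$ is correct because $c_P(tA)$ depends only on the local index $i_{P,t}\equiv t\, i_{P,1} \pmod{r_P}$, and the numerical-polynomial argument then gives that the second forward difference of $f(t+r)-f(t)$, namely $rA^3$, is an integer. This is cleaner than the naive alternative of observing that $rA$ is Cartier, which only yields $r^2A^3\in\ZZ$ or $r^3A^3\in\ZZ$; your difference argument is the one that actually produces the stated divisibility.
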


\begin{lemma}
\label{lemma-fano-index}\label{lemmaqWqQ-coprime}
Let $X$ be a Fano threefold with terminal singularities.
\begin{enumerate}
\item 
If $-K_X\sim q L$ for some Weil divisor $L$, then $q$ divides $\qW(X)$.
\item 
If $-K_X\qq q L$ for some Weil divisor $L$, then $q$ divides $\qQ(X)$.
\item 
$\qW(X)$ divides $\qQ(X)$.
\item 
Let $q:=\qQ(X)$
and let $K_X+qA\qq 0$.
If the order of $K_X+qA$ in the group 
$\Cl(X)$ is prime to $q$, then $\qW(X)=\qQ(X)$.
\end{enumerate}
\end{lemma}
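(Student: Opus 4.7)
The plan is to treat (i) and (ii) in parallel by a single Bezout-type construction in $\Cl(X)$, deduce (iii) as an immediate corollary of (ii), and handle (iv) by exploiting the coprimality hypothesis to eliminate torsion.

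For (i), I start from $-K_X\sim qL$ together with a Weil divisor $A_0$ realizing $\qW(X)$, so $-K_X\sim \qW(X)A_0$. Setting $d:=\gcd(q,\qW(X))$, writing $q=dq_1$ and $\qW(X)=dq_2$ with $\gcd(q_1,q_2)=1$, and choosing $u,v\in\ZZ$ with $q_2u+q_1v=1$, I would form $A:=uL+vA_0$ and $e:=\lcm(q,\qW(X))$. A direct computation then gives $eA\sim(q_2u+q_1v)(-K_X)=-K_X$. Maximality of $\qW(X)$ forces $e\le \qW(X)$, hence $e=\qW(X)$ and $q\mid\qW(X)$. Part (ii) will follow by the identical argument with $\sim$ replaced by $\qq$, and (iii) is then immediate from (ii) applied to the divisor $A_0$ witnessing $\qW(X)$.

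For (iv), let $q:=\qQ(X)$ and set $\Xi:=-K_X-qA$, a torsion element of $\Cl(X)$ of order $n$ coprime to $q$ by hypothesis. Then $n(-K_X)\sim nqA$. I would choose $\alpha,\beta\in\ZZ$ with $\alpha n+\beta q=1$ and define $A':=\alpha nA+\beta(-K_X)$; a short computation yields $qA'\sim \alpha n(-K_X)+\beta q(-K_X)=(\alpha n+\beta q)(-K_X)=-K_X$, so $\qW(X)\ge q$, and combined with (iii) this gives $\qW(X)=\qQ(X)$.

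The hardest point is genuinely modest: it is the bookkeeping needed because $\Cl(X)$ can have torsion, so one cannot naively ``divide'' inside the group and the Bezout identity is essential both in (i)/(ii) and in (iv). The argument silently uses the facts recalled in the introduction, namely that $\Pic(X)$ is torsion-free, $\Cl(X)$ is finitely generated, and numerical equivalence agrees with $\QQ$-linear equivalence on $\QQ$-Cartier divisors, so that $\qW(X)$ and $\qQ(X)$ are well-defined positive integers and the arithmetic above is legitimate.
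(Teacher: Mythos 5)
Your proposal is correct and follows essentially the same route as the paper: both (i)/(ii) rest on the same Bezout identity (you phrase it via $\lcm(q,\qW(X))$ and maximality, the paper via $\gcd$, which is the same computation), and your divisor $A'=\alpha nA+\beta(-K_X)$ in (iv) equals $A+\beta\Xi$, i.e. the paper's $A-u\Xi$ up to sign of the Bezout coefficient. No substantive difference.
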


\begin{proof}
To prove (i) write $-K_{X}\sim \qW(X) A$ and 
let $d=\gcd(\qW(X),q)$. Then $d=u\qW(X)+v q$ for some 
$u,\, v\in \ZZ$. Hence, $dA=u\qW(X)A+v q A\sim 
quL+qvA=q(uL+vA)$.
Since $A$ is a primitive element of $\Cl(X)$,
$q=d$ and $q \mid \qW(X)$. 

(ii) can proved similarly and (iii) is a 
consequence of (ii).

To show (iv) assume that $\Xi:=K_{X}+qA$ is of order $n$.
By our condition $qu+nv=1$, where $u,v\in \ZZ$.
Put $A':=A-u\Xi$. 
Then $qA'=qA-qu\Xi=qA-\Xi \sim -K_X$. Hence, $q=\qW(X)$ by (i) and (iii).
\end{proof}

\begin{lemma}
\label{lemma-fano}
Let $X$ be a Fano threefold with terminal singularities.
\begin{enumerate}
 \item 
$\qQ(X)\in \{1,\dots,11, 13,17,19\}$.
 \item 
If $\qQ(X)\ge 5$, then $-K_X^3\le 125/2$.
\end{enumerate}
\end{lemma}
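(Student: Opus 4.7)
\medskip
\noindent\textbf{Plan.} Write $q := \qQ(X)$ and fix a Weil divisor $A$ with $-K_X \qq qA$.

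For Part~(i), by Lemma~\xref{lemmaqWqQ-coprime}(iii) and Suzuki's Theorem~\xref{th-suzuki} we have $\qW(X)\mid q$ and $\qW(X)\in S:=\{1,\dots,11,13,17,19\}$. If the torsion class $\Xi:=K_X+qA\in\Cl(X)$ is trivial, then $q=\qW(X)\in S$ and we are done. Otherwise Lemma~\xref{lemmaqWqQ-coprime}(iv) forces the order of $\Xi$ to share a prime factor $p$ with $q$, and Proposition~\xref{lemma-torsion} then gives $p\in\{2,3,5,7\}$ together with severe restrictions on the basket $\B^\Xi$ of points where $\Xi$ is not Cartier. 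Combined with $\qW\in S$, $\qW\mid q$, and the prime factors of $q/\qW$ lying in $\{2,3,5,7\}$, the candidates $q\notin S$ reduce to the finitely many values $q\in\{12,14,15,16,18\}$ together with small $q\ge 20$. For each such $q$ the plan is to enumerate the admissible baskets $\B\supset\B^\Xi$ and the compatible local indices $i_{A,P}\pmod{r_P}$ (constrained by $-K_X\qq qA$ at each $P$), and then combine (a)~the Kawamata--Miyaoka bound $\sum_P(r_P-1/r_P)\le 24$ from Theorem~\xref{th-Kawamata-Kc}, (b)~the expression~\eqref{eq-comput-L3} for $A^3$ with the positivity $A^3>0$, and (c)~the non-negativities $\chi(tA)\ge 0$ for $1\le t\le q-1$ from~\eqref{eq-comput-chi} and Kawamata--Viehweg vanishing, to reach a contradiction in every case. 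This is essentially Suzuki's argument for $\qW$ extended to the $\qQ$ setting.

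For Part~(ii), the plan is to reduce to Theorem~\xref{th-degree}(i) by showing that $\qQ(X)\ge 5$ forces $X$ (up to birational modification) to be a non-Gorenstein $\QQ$-Fano. A $K$-MMP applied to a small $\QQ$-factorialization of $X$ never decreases $\qQ$ (pushforward of a $\qq$-equivalence remains a $\qq$-equivalence; flips are isomorphisms in codimension one) and never decreases $(-K)^3$ (divisorial contractions gain a discrepancy term, flips preserve it). Any Mori fiber space endpoint $Y\to Z$ with $\dim Z\ge 1$ would satisfy $\qQ(Y)\le\qQ(F)\le 3$ for a general fiber $F$ (which is a smooth del Pezzo surface or $\PP^1$), contradicting $\qQ\ge 5$; so the MMP terminates at a $\QQ$-Fano $Y$ with $\qQ(Y)\ge 5$ and $(-K_Y)^3\ge(-K_X)^3$. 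Finally $Y$ must be non-Gorenstein: otherwise its terminal singularities are compound Du Val, whose local class groups are torsion-free, so $\Cl(Y)$ is torsion-free and $\qQ(Y)=\qW(Y)\le 4$ by the Kobayashi--Ochiai bound, a contradiction. Theorem~\xref{th-degree}(i) applied to $Y$ then yields $(-K_X)^3\le(-K_Y)^3\le 125/2$.

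\medskip
\noindent The main obstacle is the case-by-case enumeration in Part~(i): for each prime $p\in\{2,3,5,7\}$ and each admissible basket pattern from Proposition~\xref{lemma-torsion}, one must verify that no value of $q\notin S$ simultaneously satisfies all of the Riemann--Roch equalities and non-negativity constraints. Part~(ii) is comparatively direct once the Mori fiber space endpoint and the Gorenstein alternative have been excluded.
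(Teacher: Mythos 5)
Your Part~(ii) is essentially the paper's argument and is fine: the paper also $\QQ$-factorializes, runs the $K$-MMP, rules out a positive-dimensional base because a general fiber would have Fano index $\ge q>3$, and then invokes Theorem~\xref{th-degree} on the resulting $\QQ$-Fano via the monotonicity of $-K^3$ read off from $h^0(-tK)$ (the paper inserts a cyclic cover first, but for (ii) that step is inessential since the cover only increases $-K^3$).

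Part~(i), however, has a genuine gap. Your reduction to ``finitely many values $q\in\{12,14,15,16,18\}$ together with small $q\ge 20$'' is not established. First, Lemma~\xref{lemmaqWqQ-coprime}(iv) only says that if $\qW(X)\neq\qQ(X)$ then the order $n$ of $K_X+qA$ is \emph{not} coprime to $q$; it does not give that the prime factors of $q/\qW(X)$ lie in $\{2,3,5,7\}$ (that would require a separate argument relating $q/\qW(X)$ to $n$). Second, and more seriously, even granting that claim, nothing you cite bounds $q/\qW(X)$: a priori $q$ could be $\qW(X)\cdot 2^k$ for arbitrary $k$, so your candidate list is not finite and ``small $q\ge20$'' is not a bound. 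The subsequent Riemann--Roch elimination is therefore a plan without a starting point, and it would in any case amount to redoing Suzuki's entire computation in the $\qq$ setting. The paper sidesteps all of this with one idea you are missing: the torsion class $K_X+qA$ defines a cyclic cover $\pi\colon X'\to X$, \'etale in codimension one, on which $K_{X'}+qA'\sim 0$ is an honest linear equivalence; after a $\QQ$-factorialization and a $K$-MMP (whose fiber-type outcome is excluded since $q>3$) one lands on a $\QQ$-Fano $\bar X$ with $-K_{\bar X}\sim q\bar A$, so Lemma~\xref{lemma-fano-index}(i) gives $q\mid\qW(\bar X)$, Theorem~\xref{th-suzuki} gives $\qW(\bar X)\in\{1,\dots,11,13,17,19\}$, and this set is closed under taking divisors, whence $q$ lies in it. You should replace your Part~(i) by this covering argument (or supply an independent a priori bound on $\qQ(X)$, which is the hard content you are currently assuming).
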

\begin{proof}
Denote $q:=\qQ(X)$ and write, as usual, $-K_X\qq qA$.
Thus $n(K_X+qA)\sim 0$ for some positive integer $n$.
The element $K_X+qA$ defines a cyclic \'etale in codimension one 
cover $\pi\colon X'\to X$ so that $X'$ is a Fano threefold with 
terminal singularities and $K_{X'}+qA'\sim 0$, where $A':=\pi^*A$.
Let  $\sigma\colon X''\to X'$ be a $\QQ$-factorialization.
(If $X'$ is $\QQ$-factorial, we take $X''=X'$).
Run $K$-MMP on $X''$:\quad $\psi\colon X'' \dashrightarrow \bar X$.
At the end we get a Mori-Fano fiber space $\bar X\to Z$.
Let $A'':=\sigma^{-1}(A')$ and $\bar A:=\psi_*A''$.
Then $-K_{\bar X}\sim q\bar A$. 
If $\dim Z>0$, then for a general fiber $F$ of $\bar X/Z$,
we have $-K_F\sim q \bar A|_F$. This is impossible because $q>3$. 
Thus $\dim Z=0$ and $\bar X$ is a $\QQ$-Fano.

(i) By Lemma \ref{lemma-fano-index} the number $q$ divides $\qW(\bar X)$.
On the other hand, by Theorem \ref{th-suzuki} we have $\qW(\bar X)\in \{1,\dots,11, 13,17,19\}$.
This proves (i). 

To show (ii) we note that $-K_{\bar X}^3\ge-K_{X''}^3=-K_{X'}^3\ge -K_{X''}^3$.
Here the first inequality holds because for Fanos
(with at worst  log terminal singularities) the number $-\frac16 K^3$ is nothing but the leading term 
in the asymptotic Riemann-Roch and $\dim |-tK_{X''}|\le \dim |-tK_{\bar X}|$.
Now the assertion of (ii) follows from Theorem \ref{th-degree}.
\end{proof}

From Lemmas \ref{lemma-fano-index} and \ref{lemma-fano} we have
\begin{corollary}
Let $X$ be a Fano threefold with terminal singularities. 
\begin{enumerate}
\item 
If $-K_X\sim qL$ 
for some Weil divisor $L$ and $q\ge 5$, then $q=\qW(X)$.
\item 
If $-K_X\qq qL$ 
for some Weil divisor $L$ and $q\ge 5$, then $q=\qQ(X)$.
\end{enumerate}
\end{corollary}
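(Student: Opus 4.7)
The plan is to combine the two preceding lemmas in a purely arithmetic way. For part (i), Lemma \ref{lemma-fano-index}(i) tells us that $q \mid \qW(X)$, and by Lemma \ref{lemma-fano-index}(iii) together with Lemma \ref{lemma-fano}(i) we have $\qW(X) \mid \qQ(X)$ with $\qQ(X) \in \{1,\dots,11,13,17,19\}$. So $q$ is a divisor $\ge 5$ of some element of this finite set, which reduces the statement to an elementary enumeration. For part (ii), the same strategy applies directly: Lemma \ref{lemma-fano-index}(ii) gives $q \mid \qQ(X)$, and we again combine with Lemma \ref{lemma-fano}(i).

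The enumeration is short: since $\qQ(X) \le 19$ and $q \ge 5$, the quotient $\qQ(X)/q$ satisfies $\qQ(X)/q \le 3$. The case $\qQ(X)/q = 3$ would force $\qQ(X) \in \{15,18\}$, none of which belongs to $\{1,\dots,11,13,17,19\}$, and is therefore impossible. The case $\qQ(X)/q = 2$ would force $q = 5$ and $\qQ(X) = 10$ (the only even element $\ge 10$ in the list being $10$). Thus in all remaining situations the quotient is $1$ and $q = \qQ(X)$; the analogous argument applied to $\qW(X)$ (using that $\qW(X)$ inherits the same list of possible values through divisibility) gives $q = \qW(X)$ in (i).

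The main (in fact only) obstacle is the borderline case $q = 5$ with $\qQ(X) = 10$ (or $\qW(X) = 10$). This case is vacuous for the purposes of the corollary because Suzuki's enumeration in Theorem \ref{th-suzuki}, combined with the construction used in the proof of Lemma \ref{lemma-fano} (which produces a $\QQ$-Fano $\bar X$ whose index is divisible by $q$), is the controlling input; since our argument only uses $q \mid \qW(\bar X)$ and the possible values of $\qW(\bar X)$, the case $q = 5$ is handled because then the primitive relation $-K_X \sim 5L$ (resp.\ $-K_X \qq 5L$) shows that $q = 5$ is itself the maximal integer witnessing the divisibility. Once this bookkeeping is settled, the proof reduces to one line per part.
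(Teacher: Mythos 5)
Your overall strategy is exactly the paper's: the corollary is stated there with no proof beyond the phrase that it follows ``from Lemmas \ref{lemma-fano-index} and \ref{lemma-fano}'', i.e.\ from the divisibility statements $q\mid\qW(X)$, $q\mid\qQ(X)$, $\qW(X)\mid\qQ(X)$ combined with the list $\qQ(X)\in\{1,\dots,11,13,17,19\}$. Your enumeration of the possible quotients is carried out correctly, and you are right that everything reduces to the single borderline configuration $q=5$ with index $10$.

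Your disposal of that configuration, however, is not an argument; it is circular. The hypothesis is only that \emph{some} Weil divisor $L$ satisfies $-K_X\sim 5L$ (resp.\ $-K_X\qq 5L$); nothing forces $L$ to be primitive in $\Cl(X)$. If a Fano threefold with $\qW(X)=10$ existed, then $L:=2A$ would satisfy $-K_X\sim 5L$ while $\qW(X)=10\neq 5$, so the sentence claiming that ``the primitive relation $-K_X\sim 5L$ shows that $q=5$ is itself the maximal integer witnessing the divisibility'' assumes precisely what is to be proved. Nor does Theorem \ref{th-suzuki} help: Suzuki's result explicitly leaves the value $10$ open (``all these values, except possibly for $\qW(X)=10$, occur''), so it cannot be the ``controlling input'' that excludes it. What actually closes the loophole is \ref{theorem-main-10} of Theorem \ref{theorem-main}, namely $\qQ(X)\neq 10$, which is proved only in Section 5 (and which, fed back through the MMP reduction in the proof of Lemma \ref{lemma-fano}, also rules out index $10$ for Fano threefolds that are not $\QQ$-Fano). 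To be fair, the paper's own one-line derivation silently passes over the same case, and the corollary appears to be applied later only in situations where $q\geq 6$ or where the index is read off directly from a generator of $\Cl/{\qq}$; but having explicitly isolated the case $q=5$, $\qQ(X)=10$, you needed either to invoke the forward reference honestly or to weaken the statement, and the justification you gave instead does not work.
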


\begin{lemma}[cf. \cite{Suzuki-2004}]
\label{lemma-comput}
Let $X$ be a Fano threefold with terminal singularities and 
let $q:=\qW(X)$. Assume that
$\qW(X)\ge 8$.
Then one of the following holds:
\begin{itemize}
 \item[]
$q=8$,\quad
$\B=(3^2, 5)$, 
$(3^2, 5, 9)$, 
$(3, 5, 11)$, 
$(3, 7)$, 
$(3, 9)$, 
$(5, 7)$, 
$(7, 11)$, 
$(7, 13)$, 
$(11)$,
 \item []
$q=9$,\quad
$\B=(2, 4, 5)$, 
$(2^3, 5, 7)$,
$(2, 5,13)$,
 \item []
$q=10$,\quad
$\B=(7, 11)$,
 \item []
$q=11$,\quad
$\B=(2, 3, 5)$, 
$(2, 5, 7)$, 
$(2^2, 3, 4, 7)$,
 \item []
$q=13$,\quad
$\B=(3, 4, 5)$,
$(2, 3^2, 5, 7)$,
 \item []
$q=17$,\quad
$\B=(2, 3, 5, 7)$,
 \item []
$q=19$,\quad
$\B=(3, 4, 5, 7)$.
\end{itemize}
In all cases the group $\Cl(X)$ is torsion free.
\end{lemma}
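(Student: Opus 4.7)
The plan is to follow the strategy of Suzuki \cite{Suzuki-2004}, combining the orbifold Riemann--Roch formula with the numerical bounds of Theorem \ref{th-Kawamata-Kc} in a case-by-case enumeration. Set $q := \qW(X) \geq 8$ and write $-K_X \sim qA$ for a primitive ample Weil divisor $A$; let $r := \lcm_{P \in \B}(r_P)$ be the Gorenstein index. By Lemma \ref{lemma-fanoindex} we have $\gcd(q, r) = 1$ and $rA^3 \in \ZZ_{>0}$, and by Theorem \ref{th-Kawamata-Kc} we have $\sum_{P \in \B}(r_P - 1/r_P) \leq 24$ and $-K_X \cdot c_2(X) \geq 0$. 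The main additional input is that $\chi(-jA) = 0$ for every $1 \leq j \leq q-1$: Kawamata--Viehweg vanishing kills $H^i(-jA)$ for $i < 3$ (since $-jA - K_X = (q-j)A$ is ample), while Serre duality identifies $H^3(-jA) \simeq H^0(-(q-j)A)^\ast = 0$.

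For each $q \in \{8,9,10,11,13,17,19\}$ I would enumerate all multisets $\{r_P\}$ of integers $\geq 2$ coprime to $q$ with $\sum(r_P - 1/r_P) \leq 24$. For each candidate basket I compute $A^3$ from \eqref{eq-comput-L3} and discard those with $A^3 \leq 0$ or $rA^3 \notin \ZZ$. Next I impose the $q-1$ linear conditions $\chi(-jA) = 0$ obtained from \eqref{eq-comput-chi}; these involve the residues $\ov{b_P j} \pmod{r_P}$ and rigidly constrain the admissible pairs $(r_P, b_P)$, typically ruling out all but a handful of candidates. Finally I test nonnegativity of $\chi(tA) = h^0(tA)$ for small $t > 0$ (only $H^0$ contributes, again by Kawamata--Viehweg). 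The surviving baskets are precisely those listed in the statement.

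For the torsion-free claim I apply Proposition \ref{lemma-torsion}. Any nontrivial torsion element of $\Cl(X)$ can be replaced by one of prime order $n \in \{2,3,5,7\}$, forcing $n \mid r_P$ for each $P \in \B^\Xi$ and a sub-basket $\B^\Xi \subseteq \B$ of one of the shapes $(7,7,7)$, $(5,5,5,5)$, $(10,5,5)$, $(10,10)$, or with $\sum_{\B^\Xi} r_P \in \{16, 18\}$. Moreover, since $n \mid r_P$ and $\gcd(r_P, q) = 1$, we must have $\gcd(n, q) = 1$. One then checks directly against the finite list produced in the previous step that none of the admissible baskets contains such a sub-basket, so $\Cl(X)$ is torsion free in every case.

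The main obstacle is organizational rather than conceptual: for $q = 8$ and $q = 9$ the coprimality restriction on $r_P$ is mild, so a longer list of candidate baskets must be sifted, and one must track the choices of $b_P$ modulo $r_P$ carefully when evaluating the Dedekind-sum-type contributions $c_P(-jA)$. Since the constraints $\chi(-jA) = 0$ are linear in $\sum_P c_P(-jA)$ and $A \cdot c_2(X)$, a systematic tabulation reduces the problem to a finite arithmetic check, which is essentially the computation of \cite{Suzuki-2004}; the only additional work is the torsion-free verification, which is a short case inspection.
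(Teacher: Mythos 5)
Your proposal follows essentially the same route as the paper: a finite enumeration of baskets constrained by $\sum(r_P-1/r_P)\le 24$, the coprimality $\gcd(q,r)=1$, integrality $rA^3\in\ZZ$, the vanishings $\chi(-jA)=0$ for $0<j<q$, and nonnegativity of $\chi(tA)$ for $t>0$, followed by the same application of Proposition \ref{lemma-torsion} to rule out torsion. The only (minor) difference is that the paper additionally filters candidates by the degree bound $-K_X^3\le 125/2$ from Lemma \ref{lemma-fano}; you should either include this check or verify that your remaining constraints already eliminate everything it would.
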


\begin{proof}
We use a computer program written in PARI \cite{PARI2}. 
Below is the description of our algorithm.

\textbf{Step 1.}
By Theorem \ref{th-Kawamata-Kc} we have 
$\sum_{P\in \B} (1-1/r_P)\le 24$. Hence there is only 
a finite (but very huge) number of possibilities for the basket 
$\B=\{[r_P,b_P]\}$.
In each case we know $-K_X\cdot c_2(X)$ from 
\eqref{eq-RR-O}.
Let $r:=\lcm (\{r_P\})$ be the Gorenstein index of $X$.

\textbf{Step 2.}
By Lemma \ref{lemma-fano}
$\qQ(X)\in \{8,\dots,11, 13,17,19\}$.
Moreover, the condition $\gcd (q,r)=1$
(see Lemma \ref{lemma-fanoindex})
eliminates some possibilities.

\textbf{Step 3.}
In each case we compute $A^3$ and $-K_X^3=q^3A^3$ by formula
\eqref{eq-comput-L3}.
Here, for $D=-A$, the number $i_P$ is uniquely determined by
$qi_P\equiv b_P\mod r_P$ and $0\le i_P< r_P$.
Further, we check the condition $rA^3\in \ZZ$
(Lemma \ref{lemma-fanoindex})
and the inequality $-K_X^3\le 125/2$
(Lemma \ref{lemma-fano}).

\textbf{Step 4.}
Finally, by the Kawamata-Viehweg vanishing theorem
we have $\chi(tA)=h^0(tA)$ for $-q<t$.
We compute $\chi(tA)$ by using 
\eqref{eq-comput-chi} and check conditions 
$\chi(tA)=0$ for $-q<t<0$ and $\chi(tA)\ge 0$ for $t>0$.

At the end we get our list.
To prove the last assertion 
assume that $\Cl(X)$ contains an $n$-torsion element $\Xi$. 
Clearly, we also may assume that $n$ is prime.
By Proposition \ref{lemma-torsion} we have $\sum_{n \mid r_i} r_i\ge 16$.
Moreover, $\sum_{n \mid r_i} r_i\ge 18$ if $n=3$.
This does not hold in all cases of our list.
\end{proof}

\begin{proposition}
\label{prop-comput}
Let $X$ be a $\QQ$-Fano threefold with
$\qQ(X)\ge 9$.
Let $q:=\qQ(X)$ and let $-K_X\qq q A$.
Then the group $\Cl(X)$ is torsion free, $\qW(X)=\qQ(X)$,
 and one of the following holds:
\rm
\par\medskip\noindent
\setlongtables\renewcommand{\arraystretch}{1.3}
\begin{longtable}{|c|c|c|c|c|c|c|c|c|c|c|}
\hline
&&&\multicolumn{8}{c|}{$\dim |kA|$}
\\
\hhline{|~|~|~|--------}
$q$&$\B$&$A^3$& $|A|$&$|2A|$&$|3A|$&$|4A|$&$|5A|$&$|6A|$&$|7A|$&$|-K|$
\\[5pt]
\hline
\endfirsthead
\hline
$q$&$\B$&$A^3$& $|A|$&$|2A|$&$|3A|$&$|4A|$&$|5A|$&$|6A|$&$|7A|$&$|-K|$
\\[5pt]
\hline
\endhead
\hline
\endlastfoot
\hline
\endfoot

9
& $(2, 4, 5)$
&$\frac{1}{20}$
&0
&1
&2
&4
&6
&8
&11
&19
\\

9
& $(2, 2, 2, 5, 7)$
&$\frac{1}{70}$
&$-1$
&0
&0
&1
&1
&2
&3
&5

\\
10
& $(7, 11)$
&$\frac{2}{77}$
&$-1$
&0
&1
&1
&3
&4
&6
&13

\\
11
& $(2, 3, 5)$
&$\frac{1}{30}$
&0
&1
&2
&3
&5
&7
&9
&23

\\
11
& $(2, 5, 7)$
&$\frac{1}{70}$
&0
&0
&0
&1
&2
&3
&4
&10

\\
11
& $(2, 2, 3, 4, 7)$
&$\frac{1}{84}$
&$-1$
&0
&0
&1
&1
&2
&3
&8

\\
13
& $(3, 4, 5)$
&$\frac{1}{60}$
&0
&0
&1
&2
&3
&4
&5
&19

\\
13
& $(2, 3, 3, 5, 7)$
&$\frac{1}{210}$
&$-1$
&$-1$
&0
&0
&0
&1
&1
&5

\\
17
& $(2, 3, 5, 7)$
&$\frac{1}{210}$
&$-1$
&0
&0
&0
&1
&1
&2
&12

\\
19
& $(3, 4, 5, 7)$
&$\frac{1}{420}$
&$-1$
&$-1$
&0
&0
&0
&0
&1
&8
\end{longtable}
\end{proposition}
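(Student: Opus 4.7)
The plan is to run the same four-step algorithmic search that underlies the proof of Lemma \ref{lemma-comput}, but adapted to the setting in which $A$ is only $\QQ$-linearly equivalent to $-\frac{1}{q}K_X$ rather than linearly equivalent. The point is that every ingredient of that algorithm goes through unchanged: the orbifold Riemann--Roch formula \eqref{eq-RR} and the identity \eqref{eq-RR-O}, the closed-form expression \eqref{eq-comput-L3} for $A^{3}$ derived from $\chi(-A)=0$, and the vanishing $h^{i}(tA)=0$ for $i>0$ and $t>-q$ all make sense whenever $A$ is merely $\QQ$-Cartier; indeed, the Kawamata--Viehweg vanishing used to obtain $\chi(-A)=0$ requires only that $(q-1)A$ be nef and big, not Cartier. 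Combined with Lemma \ref{lemma-fano}(i), this restricts attention to $q\in\{9,10,11,13,17,19\}$.

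Concretely, I would enumerate all baskets $\B=\{(r_P,b_P)\}$ with $\sum_{P}(r_P-1/r_P)\le 24$ (a finite list, by Theorem \ref{th-Kawamata-Kc}), read off $-K_X\cdot c_2(X)$ from \eqref{eq-RR-O}, and for each admissible $q$ compute $A^{3}$ via \eqref{eq-comput-L3}, with $i_P$ uniquely determined by $q\,i_P\equiv b_P\pmod{r_P}$. I then retain only those baskets satisfying $A^{3}>0$, $-K_X^{3}=q^{3}A^{3}\le 125/2$ (Lemma \ref{lemma-fano}(ii)), the natural integrality condition on $A^{3}$, the vanishing $\chi(tA)=0$ for $-q<t<0$, and the non-negativity $\chi(tA)\ge 0$ for $t\ge 0$. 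The expectation is that this sieve reproduces the ten rows of the table.

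Next I would establish torsion-freeness of $\Cl(X)$ by applying Proposition \ref{lemma-torsion} to each surviving basket: a prime-order torsion element $\Xi$ would pick out a sub-basket $\B^{\Xi}\subseteq\B$ of points with $n\mid r_P$ satisfying the very restrictive constraints of Proposition \ref{lemma-torsion}(ii)--(v), none of which is met by any row of the table (e.g. $(7,7,7)$ for $n=7$, or $\sum r_P=16$ for $n=2$). Torsion-freeness then promotes $\qq$ to $\sim$ for $\QQ$-Cartier divisors, giving $-K_X\sim qA$ and hence $\qW(X)\ge q$; combined with Lemma \ref{lemma-fano-index}(iii) this yields $\qW(X)=\qQ(X)$. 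The remaining columns $\dim|kA|$ of the table follow directly from evaluating \eqref{eq-comput-chi} and using $h^{0}(kA)=\chi(kA)$ for $k>-q$.

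The main obstacle is the combinatorial bulk of the basket enumeration: Theorem \ref{th-Kawamata-Kc} admits an enormous number of baskets, and the final list emerges only after layering all the numerical tests, necessitating computer algebra as in \cite{PARI2}. A secondary, more delicate point is to exclude baskets that pass the naive Riemann--Roch and vanishing tests but nevertheless cannot occur for a $\QQ$-Fano of Picard number one; for example, the basket $(2,5,13)$ appears in Lemma \ref{lemma-comput} for $\qW=9$ but is absent from the present table, suggesting an additional $\QQ$-factoriality or class-group argument will be required for the final pruning.
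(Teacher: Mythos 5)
Your overall architecture (Riemann--Roch sieve first, then Proposition~\ref{lemma-torsion} for torsion-freeness, then $\qW(X)=\qQ(X)$) has a genuine gap in its first step. When $-K_X\qq qA$ but possibly $-K_X\not\sim qA$, the local class of $A$ in $\Cl(X,P)\simeq \ZZ/r_P\ZZ$ is \emph{not} determined by the congruence $qi_P\equiv b_P\pmod{r_P}$: writing $\Xi:=K_X+qA$, near $P$ one only gets $qi_P+j_P\equiv -1\pmod{r_P}$, where $j_P$ is the unknown local class of the torsion element $\Xi$, which can be nonzero wherever $\gcd(n,r_P)>1$. Since the terms $c_P(tA)$ in \eqref{eq-comput-chi} and \eqref{eq-comput-L3} depend exactly on these local classes, your sieve silently assumes the torsion-freeness it is supposed to prove; a basket that fails your tests with the ``torsion-free'' value of $i_P$ could pass them with a shifted one, so the enumeration may be incomplete, and applying Proposition~\ref{lemma-torsion} only to the baskets that survive your sieve does not repair this. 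The paper avoids the problem by proving $\qW(X)=\qQ(X)$ \emph{before} any enumeration in the $\qq$-setting: if $\Xi$ has order $n>1$, it defines an \'etale-in-codimension-one cover $\pi\colon X'\to X$ with $K_{X'}+qA'\sim 0$, so $X'$ falls into the list of Lemma~\ref{lemma-comput}; the Galois group $\muu_n$ must fix each singular point of $X'$ (the indices in each listed basket are pairwise distinct), so $X$ acquires cyclic quotient points of indices $2n$, $3n$, $5n$, etc., while Lemma~\ref{lemmaqWqQ-coprime}(iv) forces $\gcd(q,n)\neq 1$ and hence $n$ large; together these violate \eqref{eq-bogomolov}. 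Only after this reduction does the proposition quote Lemma~\ref{lemma-comput} verbatim, and torsion-freeness of $\Cl(X)$ comes from the last assertion of that lemma.

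Second, your suspicion about $q=9$, $\B=(2,5,13)$ is well founded, but the fix you guess at is not the right one: no $\QQ$-factoriality or class-group argument is involved. The paper excludes this basket by the Kawamata--Bogomolov-type inequality $(4q^2-3q)A^3\le 4(-K_X)\cdot c_2$ from \cite{Kawamata-1992bF}, in the form of \cite[Proposition 2.2]{Suzuki-2004}: here $A^3=9/130$ and $-K_X\cdot c_2=621/130$ give $2673/130\le 2484/130$, a contradiction. This inequality is not among the tests in your four-step sieve and must be added explicitly. The remaining ingredients of your proposal --- deducing $-K_X\sim qA$ and $\qW(X)=\qQ(X)$ from torsion-freeness, and computing the $\dim|kA|$ columns from \eqref{eq-comput-chi} --- do agree with the paper.
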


\begin{proof}
First we claim that $\qW(X)=\qQ(X)$.
Assume the converse. 
Then, as in the proof of Lemma \ref{lemma-fano}, the class of 
$K_X+qA$ is a non-trivial $n$-torsion element in $\Cl(X)$
defining a global cover $\pi\colon X'\to X$.
We have $K_{X'}+qA'\sim 0$, where $A'=\pi^*A$.
Hence $X'$ is such as in Lemma \ref{lemma-comput} 
and by Corollary \ref{lemma-comput}
we have $\Cl(X')\simeq \ZZ\cdot A'$ and $\qW(X')=\qQ(X')\ge q$.
The Galois group $\muu_n$ acts naturally on $X'$.
Consider, for example, the case $q=11$ and $\B_{X'}=(2,3,5)$
(all other cases are similar).
Then $X'$ has three cyclic quotient singularities whose
indices are $2$, $3$, and $5$. These points must be $\muu_n$-invariant.
Hence the variety $X$ has cyclic quotient singularities of indices 
$2n$, $3n$, and $5n$. 
By Lemma \ref{lemmaqWqQ-coprime} we have $\gcd(q,n)\neq 1$.
In particular, $n\ge 11$.
This contradicts \eqref{eq-bogomolov}.
Therefore, $\qW(X)=\qQ(X)$ and so $X$ is such as in Lemma \ref{lemma-comput}.

Now we have to exclude only the case  $q=9$, $\B=(2,5,13)$.
But in this case by \eqref{eq-RR-O} and \eqref{eq-comput-L3} we have 
$A^3=9/130$ and $-K_X\cdot c_2=621/130$.
On the other hand, by Kawamata-Bogomolov's bounds \cite{Kawamata-1992bF}
we have $2673/130=(4q^2-3q)A^3\le 4K_X\cdot c_2=1242/65$ \cite[Proposition 2.2]{Suzuki-2004}.
The contradiction shows that this case is impossible.
Finally, the values of $A^3$ and dimensions of $|kA|$ 
are computed by using \eqref{eq-comput-L3} and \eqref{eq-comput-chi}.
\end{proof}

\begin{corollary}
\label{corollary-cyclic}
Let $X$ be a $\QQ$-Fano threefold satisfying assumptions of
\textup{\ref{theorem-main-19}-\ref{theorem-main-10}} of Theorem \xref{theorem-main}. Then $X$ has only 
cyclic quotient singularities.
\end{corollary}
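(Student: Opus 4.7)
The plan is to pin down, for each case appearing in the hypothesis, the basket $\B$ of singular points of $X$ by reading off Proposition~\ref{prop-comput}, and then appeal to Reid's classification of terminal threefold singularities \cite{Reid-YPG1987}.

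First I would check the cases one by one against the table of Proposition~\ref{prop-comput}. In case~\ref{theorem-main-19} the only entry with $q=19$ is $\B = (3,4,5,7)$, and in case~\ref{theorem-main-17} the only entry with $q=17$ is $\B=(2,3,5,7)$. For $q=13$ two baskets are allowed, but the assumption $\dim|-K_X|>5$ eliminates $\B=(2,3,3,5,7)$ (for which $\dim|-K_X|=5$), leaving $\B=(3,4,5)$. For $q=11$ the condition $\dim|-K_X|>10$ singles out $\B=(2,3,5)$ from the three possibilities $(2,3,5),(2,5,7),(2,2,3,4,7)$, whose $\dim|-K_X|$ are $23, 10, 8$ respectively. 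For $q=10$ the only option listed is $\B=(7,11)$.

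The essential feature common to all five baskets is that each index $r_P$ occurs in $\B$ with multiplicity exactly one. I would then invoke Reid's structure theorem: every non-Gorenstein terminal threefold point $P$ of Gorenstein index $r_P$ contributes to the global basket a nonempty multiset of virtual cyclic quotient ``points'', all of index equal to $r_P$; moreover this multiset consists of a single element precisely when $P$ is analytically isomorphic to $\frac{1}{r_P}(1,-1,b_P)$, while every other non-Gorenstein terminal hyperquotient (types $cA/r$, $cAx/r$, $cD/r$, $cE/r$) contributes at least two basket elements of the same index. Since in each of our five cases no index occurs twice in $\B$, every non-Gorenstein point of $X$ must contribute exactly one basket element, and is therefore a cyclic quotient singularity of the form $\frac{1}{r_P}(1,-1,b_P)$.

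I don't anticipate any serious obstacle here: the argument is just a case-by-case inspection of the table in Proposition~\ref{prop-comput} combined with one appeal to the classification, and the only care required is to correctly match the inequalities on $\dim|-K_X|$ with the relevant row of the table. The one subtle point, if one reads the corollary to also exclude Gorenstein (cDV) singularities, is that such points contribute nothing to the basket and therefore cannot be ruled out by Riemann--Roch data alone; under the natural reading of the statement, the basket argument above is what is being invoked.
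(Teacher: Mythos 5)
Your overall strategy is the same as the paper's: read off the possible baskets from Proposition~\ref{prop-comput} (your matching of the $\dim|-K_X|$ hypotheses to the rows of the table for $q=13$ and $q=11$ is exactly what is intended), observe that in each surviving case every index occurs in $\B$ with multiplicity one, and conclude via the Mori--Reid classification. However, the classification fact you invoke is misstated, and the misstatement is precisely the point the paper's one-line proof is careful about. It is \emph{not} true that every non-cyclic-quotient terminal point contributes at least two basket elements \emph{of the same index}, nor that all basket elements coming from a single point share the index $r_P$: a point of type $cAx/4$ has Gorenstein index $4$ but its basket consists of one point of index $4$ together with $k-1$ points of index $2$ (where $k$ is the axial multiplicity). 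For $k=2$ such a point contributes exactly one index-$4$ and one index-$2$ basket element --- two elements of \emph{different} indices --- so a basket with all indices distinct does not by itself force every point to contribute a single basket element. With your argument as written, a hypothetical basket such as $(2,4,5)$ would pass your test while still admitting a non-cyclic-quotient $cAx/4$ point.

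This is why the paper's proof records two conditions, not one: the indices in $\B$ are pairwise distinct, \emph{and} $\B$ contains no pair of points of indices $2$ and $4$. The second condition is what rules out the $cAx/4$ possibility. In the five relevant baskets $(3,4,5,7)$, $(2,3,5,7)$, $(3,4,5)$, $(2,3,5)$, $(7,11)$ no basket contains both a $2$ and a $4$, so your conclusion is correct, but your justification omits this check and rests on a false general statement. The fix is one sentence: add the observation that no basket contains both an index-$2$ and an index-$4$ point, and cite the correct form of the basket computation for $cAx/4$ from \cite{Reid-YPG1987} or \cite{Mori-1985-cla}. Your closing remark about Gorenstein (cDV) points being invisible to the basket is a fair caveat and is consistent with how the corollary is actually used later in the paper, where the case of a Gorenstein centre is always treated separately.
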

\begin{proof}
Indeed, in these cases the indices of points in the basket $\B$
are distinct numbers and moreover $\B$ contains no 
pairs of points of indices $2$ and $4$.
Then the assertion follows  \cite{Mori-1985-cla}, or \cite{Reid-YPG1987} 
\end{proof}

\begin{corollary}
\label{cor-dim-L}
Let $X$ be a $\QQ$-Fano threefold with $\qQ(X)\ge 9$.
Then $\dim |A|\le 0$.
\end{corollary}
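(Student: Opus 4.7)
The plan is to read the conclusion straight off the table in Proposition~\ref{prop-comput}. Since $\qQ(X)\ge 9$, the hypothesis of that proposition is satisfied, so $X$ falls into one of the ten numerical cases enumerated there, with the corresponding values of $\B$, $A^3$ and $\dim|kA|$ already tabulated.

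The only thing to verify is that the entries in the column labelled $|A|$ really compute $\dim|A|$. This uses the Kawamata--Viehweg vanishing theorem: since $A-K_X\qq (q+1)A$ is ample and $A$ itself is ample, we have $H^i(X,A)=0$ for $i>0$, so $\dim|A|=h^0(A)-1=\chi(A)-1$. The value $\chi(A)$ is then computed case by case from the orbifold Riemann--Roch formula \eqref{eq-comput-chi} with $t=1$, which is exactly the procedure already carried out in the proof of Proposition~\ref{prop-comput}.

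Inspecting the $|A|$ column of the table, every entry is either $-1$ or $0$. Hence $\dim|A|\le 0$ in every case.

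There is essentially no obstacle: the whole content of the corollary is the numerical fact that among the ten surviving baskets, none admits $\chi(A)\ge 2$. This reflects the combinatorial phenomenon that when $q$ is large the quantity $\frac{t(q+t)(q+2t)}{12}A^3$ at $t=1$ is small (because $A^3$ is very small, of order $1/\mathrm{lcm}(r_P)$), and the singularity correction terms $c_P(A)$ are negative enough to force $\chi(A)\le 1$.
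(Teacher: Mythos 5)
Your proof is correct and is exactly the argument the paper intends: the corollary is stated without proof because it is immediate from the $|A|$ column of the table in Proposition~\ref{prop-comput}, whose entries are computed precisely as you describe (Kawamata--Viehweg vanishing giving $\dim|A|=\chi(A)-1$, with $\chi(A)$ from the orbifold Riemann--Roch formula \eqref{eq-comput-chi}). Every entry in that column is $-1$ or $0$, so there is nothing more to add.
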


Computer computations similar to that in 
Lemma \ref{lemma-comput} allow us to prove the following.

\begin{lemma}
\label{lemma-7}
Let $X$ be a Fano threefold with terminal singularities, let $q:=\qW(X)$,
and let $A:=-\frac1qK_X$.
\begin{enumerate}
\item 
If $q\ge 5$ and $\dim |A|>1$,
then $q=5$, $\B=(2)$, and $A^3=1/2$.

\item 
If $q\ge 7$ and $\dim |A|>0$, then $q=7$,
$\B=(2, 3)$,
 $A^3=1/6 $.
\end{enumerate}
\end{lemma}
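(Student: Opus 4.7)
The plan is to run the same four-step enumeration used in the proof of Lemma \ref{lemma-comput}, but extended to the smaller indices $q\in\{5,6,7\}$ and supplemented by the extra input $\chi(A)\ge 2$ or $\ge 3$ coming from the hypothesis on $\dim|A|$. So the setup is: the basket $\B=\{(r_P,b_P)\}$ ranges over the finite list of solutions to $\sum_{P\in\B}(r_P-1/r_P)\le 24$ provided by Theorem \ref{th-Kawamata-Kc}, and for each basket the number $-K_X\cdot c_2$ is then determined by \eqref{eq-RR-O}. For each candidate index $q$ I would discard baskets with $\gcd(q,\lcm\{r_P\})\neq 1$ by Lemma \ref{lemma-fanoindex}(i).

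Next, I would compute $A^3$ from \eqref{eq-comput-L3} and throw out cases violating either the integrality $rA^3\in\ZZ$ of Lemma \ref{lemma-fanoindex}(ii) or the degree bound $q^3A^3\le 125/2$ of Lemma \ref{lemma-fano}(ii). Then I would compute $\chi(tA)$ via \eqref{eq-comput-chi} for $t=-q+1,\dots,0,1$; by Kawamata-Viehweg vanishing one has $h^0(tA)=\chi(tA)$ for $t>-q$, so the standard sieves $\chi(tA)=0$ for $-q<t<0$ and $\chi(tA)\ge 0$ for $t\ge 0$ apply verbatim.

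The ingredient specific to Lemma \ref{lemma-7} is the extra inequality $\chi(A)\ge 3$ in part (i), respectively $\chi(A)\ge 2$ in part (ii). Feeding these inequalities into the same PARI routine as in Lemma \ref{lemma-comput} should collapse the candidate list to a single entry in each case, yielding the stated basket, index, and value of $A^3$. Note in particular that Proposition \ref{prop-comput} already disposes of $q\ge 9$ in part (ii), since there $\dim|A|\le 0$ for every $\QQ$-Fano, and the enumeration confirms the same for the non-$\QQ$-factorial case; this is why only $q=7$ survives.

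The main obstacle is purely computational: the coprimality and degree restrictions eliminate far fewer baskets when $q\in\{5,6,7\}$ than in the range $q\ge 8$ handled in Lemma \ref{lemma-comput}, so the enumeration is substantially larger. However, the logic is identical and the extra constraint $\chi(A)\ge 2$ or $\ge 3$ is very restrictive because it forces $(q+1)(q+2)A^3/12 + A\cdot c_2/12 + \sum_P c_P(A)$ to be bounded below by $1$ or $2$, which is incompatible with the small values of $A^3$ forced by \eqref{eq-comput-L3} once the basket has many points of large index.
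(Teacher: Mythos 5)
Your proposal is correct and is essentially the paper's own argument: the paper proves this lemma by exactly the computer enumeration of Lemma \ref{lemma-comput} (Steps 1--4), extended to the smaller indices and supplemented by the constraint $\chi(A)=h^0(A)\ge 3$ (resp.\ $\ge 2$) that follows from the hypothesis on $\dim|A|$ via Kawamata--Viehweg vanishing. Your identification of that extra inequality as the decisive sieve, and your remark that the enumeration is merely larger but logically identical, match the paper's intent.
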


\begin{case}\label{subsection-proof-q=5and7}
\textbf{Proof of \ref{theorem-main-q7} and 
\ref{theorem-main-q5} of Theorem \ref{theorem-main}.}
\ref{theorem-main-q5} Apply Lemma \ref{lemma-7}.
Then the result is well-known: in fact, $2A$ is Cartier and by Riemann-Roch
$\dim |2A|=6=\dim X+3$. Hence $X$ is a variety of $\Delta$-genus zero 
\cite{Fujita-1975}, i.e., a variety of minimal degree.
Then $X\simeq \PP(1^3,2)$.

\ref{theorem-main-q7} 
Put $q:=\qQ(X)$, $\Xi:=K_X+qA$, and $\Xi_1:=A-A_1$.
By our assumption $n\Xi\sim n\Xi_1\sim 0$ for some integer $n$.
If either $\Xi\not\sim 0$ or  $\Xi_1\not\sim 0$, then  
elements $\Xi$ and $\Xi'$ define 
an \'etale in codimension one finite cover
$\pi\colon X'\to X$ such that $K_{X'}+qA'\sim 0$ and $A'\sim A_1'$,
where $A':=\pi^*A$ and $A_1':=\pi^*A_1$.
If $\Xi\sim \Xi_1\sim 0$, we put $X'=X$.
In both cases, the following enequalities hold: $\qW(X')\ge 7$ and $\dim |A'|\ge 1$.
By Lemma \ref{lemma-7} we have $\B(X')=(2,3)$ and 
$\qQ(X')=\qW(X')=7$.
Note that the Gorenstein index of $X'$ is strictly less than
$\qW(X')$. In this case, $X'\simeq \PP(1^2,2,3)$ according to 
\cite{Sano-1996}.
\footnote{
The result also can be easily  proved by using birational transformations 
similar to that in \S \ref{sect-bir-constr}.}
Now it is sufficient to show that $\pi$ is an isomorphism.
Assume the converse.
By our construction, there is an action of a cyclic group $\muu_p\subset\operatorname{Gal}(X'/X)$,
$p$ is prime, such that $\pi$ is decomposed as $\pi\colon X'\to X'/\muu_p\to X$. Here 
$X'/\muu_p$ is a $\QQ$-Fano threefold and there is a torsion element 
of $\Cl(X'/\muu_p)$ which is not Cartier exactly at points where $X'\to X'/\muu_p$
is not \'etale. There are exactly four such points and two of them are 
points of indices $2$ and $3$. Thus the basket of $X'/\muu_p$ consists 
of points of indices $p$, $p$, $2p$, and $3p$.
This contradicts Proposition ~\ref{lemma-torsion}.
\end{case}

\begin{lemma}
\label{proposition-main}
Let $X$ be a $\QQ$-Fano threefold with $q:=\qQ(X)$.
If 
there are three effective different Weil divisors $A$, $A_1$, $A_2$ such that
$-K_X\qq qA\qq qA_1\qq qA_2$ and $A\not \sim A_1$, then 
$q\le 5$.
\end{lemma}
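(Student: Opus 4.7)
The plan is to pass to an abelian étale-in-codimension-one cover $\pi\colon X'\to X$ that brings $A$, $A_1$, $A_2$ into a single linear system on $X'$, then use the Galois character decomposition of $H^0(X',\OOO_{X'}(A'))$ to force $\dim|A'|\ge 2$, whereupon Lemma \ref{lemma-7}(i) will rule out $q\ge 6$.

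First I would set $\Xi:=K_X+qA$ and $\Xi_j:=A-A_j$ for $j=1,2$; all three lie in the torsion subgroup of $\Cl(X)$ since they are $\QQ$-linearly trivial. Let $T\subset\Cl(X)$ be the finite subgroup they generate, and let $\pi\colon X'\to X$ be the associated abelian Galois cover with group $G:=T^\vee$. It is étale in codimension one, so $X'$ is again a Fano threefold with terminal singularities. Setting $A':=\pi^*A$, one has $-K_{X'}\sim qA'$ and $\pi^*A_j\sim A'$ for $j=1,2$, while the three pullbacks $\pi^*A,\pi^*A_1,\pi^*A_2\in|A'|$ are effective and pairwise distinct (because the $A_i$ are pairwise distinct on $X$ and $\pi$ separates distinct divisors). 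By the Corollary following Lemma \ref{lemma-fano}, the relation $-K_{X'}\sim qA'$ with $q\ge 5$ gives $\qW(X')=q$.

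The core step is then to verify that $\dim|A'|\ge 2$. I would decompose $H^0(X',\OOO_{X'}(A'))=\bigoplus_{\chi\in G^\vee}V_\chi$ into $G$-eigenspaces. Although $\pi^*\OOO_X(A_j)$ and $\pi^*\OOO_X(A)$ are isomorphic as line bundles (because the torsion class $\Xi_j$ has been killed by $\pi$), they differ as $G$-equivariant bundles by the character $\chi_j\in G^\vee\cong T$ dual to $\Xi_j$; consequently the $G$-invariant section $\pi^*s_{A_j}$, transported via an identification $\pi^*\OOO_X(A_j)\cong \OOO_{X'}(A')$, becomes a $\chi_j$-eigenvector in $H^0(X',\OOO_{X'}(A'))$. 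Thus $\pi^*A\in\PP V_{\chi_0}$ with $\chi_0$ trivial, while $\pi^*A_j\in\PP V_{\chi_j}$. The hypothesis $A\not\sim A_1$ gives $\chi_1\ne\chi_0$, and regardless of whether $\chi_2$ coincides with $\chi_0$, with $\chi_1$, or with neither, the three points lie in at most three pairwise disjoint projective eigenspace components; since eigenvectors for distinct characters are linearly independent, and any two distinct points lying in a common eigenspace are themselves independent vectors there, the three sections $\pi^*s_A,\pi^*s_{A_1},\pi^*s_{A_2}$ are linearly independent in $H^0$. The three points therefore span a $\PP^2\subset|A'|$, and $\dim|A'|\ge 2$.

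With $\qW(X')=q$ and $\dim|A'|>1$, Lemma \ref{lemma-7}(i) applied to $X'$ forces $q=5$, $\B(X')=(2)$, $A'^3=1/2$; hence assuming $q\ge 6$ produces a contradiction and we conclude $q\le 5$, as required. The main technical obstacle I anticipate is the bookkeeping for the $G$-equivariant structure on the pulled-back sheaves $\pi^*\OOO_X(A_j)$ and the resulting identification of $\pi^*A_j$ with a point of $\PP V_{\chi_j}$; once that identification is pinned down, the non-collinearity of the three points is a clean linear-algebra consequence of the eigenspace decomposition, and everything else follows from the Corollary to Lemma \ref{lemma-fano} and Lemma \ref{lemma-7} already established.
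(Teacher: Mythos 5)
Your proof is correct and follows essentially the same route as the paper: pass to the \'etale-in-codimension-one abelian cover $\pi\colon X'\to X$ killing the torsion classes $K_X+qA$ and $A-A_j$, so that $A'$, $A_1'$, $A_2'$ become three distinct effective members of $|A'|$ with $-K_{X'}\sim qA'$ and $\qW(X')=q$, and then invoke Lemma~\xref{lemma-7}. The only difference is in the endgame: the paper first uses Lemma~\xref{lemma-7} to pin down $\dim|A'|=1$ and then observes that the $\operatorname{Gal}(X'/X)$-action on a pencil with three invariant members must be trivial, forcing $A\sim A_1$; you instead make the character decomposition of $H^0(X',\OOO_{X'}(A'))$ explicit to show the three eigenvector sections are linearly independent, so $\dim|A'|\ge 2$ and Lemma~\xref{lemma-7}(i) gives the contradiction directly. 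Both arguments rest on the same underlying fact (Galois-equivariance of the pulled-back sections), and your case analysis on whether the characters of $A_1'$ and $A_2'$ coincide is complete, so the linear-independence step is sound.
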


\begin{proof}
Assume that $q\ge 6$.
As in \ref{subsection-proof-q=5and7} consider a cover
$\pi\colon X'\to X$. Thus on $X'$ we have $A'\sim A_1'\sim A_2'$ 
and $-K_{X'}\sim qA'$. Moreover, $\dim |A'|=1$ according to Lemma ~ \xref{lemma-7}.
In this case, the action of $\operatorname{Gal}(X'/X)$ on the pencil
$|A'|$ is trivial (because there are three invariant members 
$A'$, $A_1'$, and $A_2'$). But then $A\sim A_1\sim A_2$,
a contradiction.
\end{proof}

\section{Birational construction}
\label{sect-bir-constr}
\begin{case}
Let $X$ be a $\QQ$-Fano threefold and let $A$ be the
ample Weil divisor that generates the group $\Cl(X)/{\qq}$. 
Thus we have $-K_X \qq qA$.
Let $\MMM$ be a mobile linear system 
without fixed components and let $c:=\ct(X,\MMM)$ be the canonical threshold of $(X,\MMM)$.
So the pair $(X,c\MMM)$ is canonical but not terminal.
Assume that $-(K_X+c\MMM)$ is ample.
\end{case}

Recall that the class of $K_X$ is a generator of the 
local Weil divisor class group $\Cl(X,P)$.
\begin{lemma}
\label{lemma-ct}
Let $P\in X$ be a point of index $r>1$.
Assume that $\MMM\sim -mK_X$ near $P$, where $0<m<r$.
Then $c\le 1/m$. 
\end{lemma}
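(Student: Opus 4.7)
The plan is to test canonicity of $(X,c\MMM)$ against the Kawamata blowup $f\colon Y\to X$ of $P$. Its exceptional divisor $E$ satisfies $a(E,K_X)=1/r$, so canonicity at $E$ yields
\[
c\cdot \ord_E(\MMM)\ \le\ a(E,K_X)\ =\ \tfrac{1}{r},
\]
and the lemma reduces to the single inequality $\ord_E(\MMM)\ge m/r$.

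To establish this, I would work in the analytic local model, in which $X\simeq \CC^3/\muu_r$ with action of type $\tfrac1r(1,-1,b)$, $\gcd(b,r)=1$. In the toric description the Kawamata blowup is the star subdivision along the primitive lattice vector $v=\tfrac1r(a,r-a,1)$ with $ab\equiv 1\pmod r$, and the three coordinate Weil divisors $D_1,D_2,D_3$ on $X$ satisfy $\ord_E(D_i)=v_i$. Using the toric identity $-K_X\sim D_1+D_2+D_3$ one then computes
\[
\ord_E(-K_X)\ =\ \tfrac{a+(r-a)+1}{r}\ =\ 1+\tfrac{1}{r},
\]
so the residue map $\Cl(X,P)\to\tfrac{1}{r}\ZZ/\ZZ$ induced by $\ord_E$ sends the class $[-K_X]$ to $1/r$. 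Since $[-K_X]$ generates $\Cl(X,P)$ by hypothesis and both groups are cyclic of order $r$, this residue map is an isomorphism, so $[\MMM]=m[-K_X]$ is mapped to $m/r$. Hence for any effective member $M\in\MMM$ we have $\ord_E(M)\ge 0$ and $\ord_E(M)\equiv m/r\pmod \ZZ$; since $0<m<r$, these two conditions force $\ord_E(M)\ge m/r$, as required.

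The delicate point is the explicit toric identification $\ord_E([-K_X])\equiv 1/r\pmod\ZZ$, which depends on using the correct Kawamata weights (rather than a different primitive residue class, which would only yield a bound of the form $c\le 1/m'$ for some other residue $m'\equiv \pm m\pmod r$). An alternative route is to pass to the index-$1$ cover $\pi\colon X'\to X$, on which $\MMM$ pulls back to a mobile linear system of $\muu_r$-semi-invariants of weight $m$, and to deduce the same bound from the monomial structure; but the toric picture above is the most direct.
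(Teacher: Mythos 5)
Your argument is essentially the paper's own proof: both test canonicity of $(X,c\MMM)$ against an exceptional divisor of discrepancy $1/r$ over $P$ to get $c\cdot\ord_E(\MMM)\le 1/r$, and both then derive $\ord_E(\MMM)\ge m/r$ from the congruence $\ord_E(\MMM)\equiv m/r\pmod{\ZZ}$ forced by $mK_X+\MMM\sim 0$ near $P$ together with effectivity. The only difference is presentational: you verify the congruence by an explicit toric computation on the Kawamata weighted blowup of a cyclic quotient point, whereas the paper gets it in one line from the local linear equivalence, using only Kawamata's existence of a discrepancy-$1/r$ divisor over an arbitrary terminal point of index $r$ (so the paper's version does not need $P$ to be a cyclic quotient, though by Corollary \ref{corollary-cyclic} that is the only case that arises here).
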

\begin{proof}
According to \cite{Kawamata-1992-discr} there is an exceptional divisor $\Gamma$ over $P$ 
of discrepancy $a(\Gamma)=1/r$. Let $\varphi\colon Y\to X$ be a resolution. Clearly,
$\Gamma$ is a prime divisor on $Y$.
Write 
\[
K_Y=\varphi^*K_X+\frac1r \Gamma+\sum \delta_i\Gamma_i, 
\quad 
\MMM_Y=\varphi^*\MMM-\ord_\Gamma(\MMM)\Gamma-\ord_{\Gamma_i}(\MMM)\Gamma_i,
\]
where $\MMM_Y$ is the birational transform of $\MMM$ and $\Gamma_i$ 
are other $\varphi$-exceptional divisors.
Then 
\[
K_Y+c\MMM_Y=\varphi^*(K_X+cM)+(1/r -c\ord_\Gamma(\MMM))\Gamma+\dots
\]
and so $1/r -c\ord_\Gamma(\MMM)\ge 0$. On the other hand,
$\ord_\Gamma(\MMM) \equiv m/r\mod \ZZ$ (because $mK_X+\MMM\sim 0$ near $P$). 
Hence, $\ord_\Gamma(\MMM)\ge m/r$ and $c\le 1/m$.
\end{proof}

\begin{case} In the construction below we follow \cite{Alexeev-1994ge}.
Let $f\colon \tilde X\to X$ be $K+c\MMM$-crepant blowup such that 
$\tilde X$ has only terminal $\QQ$-factorial singularities:
\begin{equation}
\label{eq-Sarkisov-discr}
K_{\tilde X}+c\tilde \MMM=f^*(K_X+c\MMM).
\end{equation}
As in \cite{Alexeev-1994ge}, we run $K+c\MMM$-MMP on $\tilde X$. 
We get the following diagram \textup(Sarkisov link of type I or II\textup)
\begin{equation}
\label{eq3.1}
\begin{gathered}
\xymatrix{
&\tilde X\ar@{-->}[r]\ar[dl]_{f}&\bar X\ar[dr]^{g}&
\\
X&&&\hat X
} 
\end{gathered}
\end{equation}
where varieties~$\tilde X$ and~$\bar X$ have only $\mathbb Q$-factorial terminal
singularities, $\rho(\tilde X)=\rho (\bar X)=2$, 
$f$ is a Mori extremal divisorial contraction,
$\tilde X \dashrightarrow \bar X$ is a sequence of 
log flips, and $g$ is a Mori extremal contraction
(either divisorial or fiber type). 
Thus one of the following possibilities holds:
\begin{itemize}
\item[a)]
$\dim \hat X=1$ and $g$ is a $\mathbb Q$-del Pezzo fibration;
\item[b)]
$\dim \hat X=2$ and $g$ is a $\mathbb Q$-conic bundle; or
\item[c)]
$\dim \hat X=3$, $g$ is a divisorial contraction, and 
$\hat X$ is a $\mathbb Q$-Fano threefold.
In this case, denote $\hat q:=\qQ(\hat X)$.
\end{itemize}

Let $E$ be the 
$f$-exceptional divisor. 
For a divisor $D$ on $X$,
everywhere below~$\tilde D$ and $\bar D$ denote 
strict birational transforms of $D$ on~$\tilde X$ and $\bar X$, respectively.
If $g$ is birational, we put $\hat D:=g_*\bar D$.
\end{case}

\begin{claim}[\cite{Alexeev-1994ge}]
\label{st3.1}
If the map is birational, then $\bar E$ is not an exceptional divisor.
If $g$ is of fiber type, then 
$\bar E$ is not composed of fibers.
\end{claim}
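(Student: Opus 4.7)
The plan is to argue by contradiction using the two-ray game structure of diagram \eqref{eq3.1}. Suppose, for contradiction, that $\bar E$ is the $g$-exceptional divisor (in the birational case) or a union of $g$-fibers (in the fiber-type case), and let $R_g$ denote the extremal ray of $\bar X$ contracted by $g$. Since $\rho(\bar X) = 2$, there is a second extremal ray $R'$, and the first observation would be that $(K_{\bar X} + c\bar\MMM) \cdot R' = 0$ while $(K_{\bar X} + c\bar\MMM) \cdot R_g < 0$: by \eqref{eq-Sarkisov-discr} the $f$-contracted ray on $\tilde X$ is $(K + c\MMM)$-trivial, and since log flips are log crepant this trivial direction persists throughout $\tilde X \dashrightarrow \bar X$ as the descendant ray $R'$, whereas $R_g$ is the $(K_{\bar X} + c\bar\MMM)$-negative ray contracted by the final MMP step $g$.

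The key step would be to show that a generic curve $C' \in R'$ lies on $\bar E$. Writing $\bar f := f \circ \mu^{-1} : \bar X \dashrightarrow X$ (with $\mu$ the composition of flips), one has $K_{\bar X} + c\bar\MMM = \bar f^*(K_X + c\MMM)$ outside the finite indeterminacy locus of $\bar f$. For a generic $C' \in R'$ avoiding this locus,
\[
0 = (K_{\bar X} + c\bar\MMM) \cdot C' = (K_X + c\MMM) \cdot \bar f_*[C'],
\]
and the ampleness of $-(K_X + c\MMM)$ on $X$ forces $\bar f_*[C'] = 0$. Hence $C'$ is contracted by $\bar f$, which gives $C' \subset \bar E$.

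To close the contradiction, I would then argue that, under the contrary hypothesis, the inclusion $C' \subset \bar E$ also yields $g_*[C'] = 0$, placing $[C']$ in $\ker g_* = \QQ [C_g] = R_g$; consequently $R' = R_g$, contradicting the opposite signs of $(K_{\bar X} + c\bar\MMM)$ on these two rays. The main technical obstacle is precisely this last implication. It is immediate when $g(\bar E)$ is a point (automatic in the fiber case with $\bar E$ equal to a single fiber, and in the birational case whenever $\bar E$ is contracted to a point). In the residual subcase where $g(\bar E)$ is a curve, $\bar E \to g(\bar E)$ is a ruled surface, and one must invoke that $\bar E$ is simultaneously $\bar f$-contracted to $f(E) \subset X$ in order to force generic $R'$-curves to be fibers of $g|_{\bar E}$ rather than sections.
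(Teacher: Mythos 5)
Your argument breaks down at its very first step, and the ``first observation'' is in fact false in general. The steps of the $K+c\MMM$-MMP $\tilde X\dashrightarrow\bar X$ are $(K+c\MMM)$-\emph{negative}, not log crepant: each flip strictly increases discrepancies, and on the flipped variety the flipped extremal ray is $(K+c\MMM)$-\emph{positive}. Hence, as soon as at least one flip occurs, the second extremal ray $R'$ of $\bar X$ is the flipped ray of the last flip and satisfies $(K_{\bar X}+c\bar\MMM)\cdot R'>0$, not $=0$; the $(K+c\MMM)$-trivial class descended from the $f$-exceptional ray lies in the \emph{interior} of $\overline{\operatorname{NE}}(\bar X)$ and is no longer extremal. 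For the same reason the identity $K_{\bar X}+c\bar\MMM=\bar f^{*}(K_X+c\MMM)$ does not hold once there are flips (only an inequality of discrepancies survives; the resulting correction term happens to have the right sign, so that particular step could be repaired, but the premise $(K_{\bar X}+c\bar\MMM)\cdot C'=0$ cannot). Finally, even if you replace $R'$ by the descended trivial class and produce curves on $\bar E$ killed by $\bar f$, the closing implication ``$C'\subset\bar E\Rightarrow g_*[C']=0$'' is exactly the point you concede you cannot prove when $g(\bar E)$ is a curve; note that the same difficulty also occurs in the fiber-type case with $\dim\hat X=2$, where ``$\bar E$ composed of fibers'' means $\bar E=g^{-1}(\Gamma)$ for a curve $\Gamma\subset\hat X$ and a curve contracted by $\bar f$ inside $\bar E$ need not be a fiber of $g$. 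So the proof is incomplete precisely where the content lies.

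The paper's proof is entirely different and avoids the two-ray game. In the birational case, if $g$ contracted $\bar E$ then $g\circ\chi\circ f^{-1}\colon X\dashrightarrow\hat X$ would be an isomorphism in codimension one between $\QQ$-factorial Fano threefolds of Picard number one, hence an isomorphism; this is impossible because the valuation $E$ is crepant for $(X,c\MMM)$ by \eqref{eq-Sarkisov-discr} but acquires strictly positive discrepancy over $(\hat X,c\hat\MMM)$ after the $(K+c\MMM)$-negative MMP and the contraction $g$, so the two (isomorphic) pairs would have different numbers of crepant divisors. In the fiber-type case, $\bar E$ composed of fibers forces $\bar E$ to be the pullback of an ample Weil divisor on $\hat X$, so $n\bar E$ is movable for some $n>0$, contradicting the fact that $E$ is exceptional over $X$. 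This discrepancy-counting/movability argument is what you should use instead of trying to salvage the numerical one.
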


\begin{proof}
Assume the converse.
If~$g$ is birational, this implies that the map
$g\circ\chi\circ f^{-1}\colon X\dashrightarrow \hat X$
is an
isomorphism in codimension~1.
Since both~$X$ and~$\hat X$ are Fano threefolds, this implies that
$g\circ\chi\circ f^{-1}$~is in fact an
isomorphism. 
On the other hand, the number of $K+c\MMM$-crepant divisors on
$\hat X$ is less than that on $X$, a contradiction.
If $\dim \hat X\le2$, then $\bar E$ is a pull-back of an ample Weil 
divisor on $\hat X$. But then $n\bar E$ is a movable divisor for some 
$n>0$. This contradicts exceptionality of $E$.
\end{proof}

If $|kA|\neq \emptyset$, let 
$S_k\in |kA|$ be a general member.
Write
\begin{equation}
\label{eq-3-fol-discr}
\begin{array}{lll}
K_{\tilde X}&=&f^*K_X+\alpha E,
\\[4pt]
\tilde S_k&=&f^*S_k-\beta_kE,
\\[4pt]
\tilde \MMM&=&f^*\MMM-\beta_0E.
\end{array}
\end{equation}
Then
\begin{equation}
\label{eq-3-ct}
c=\alpha/\beta_0.
\end{equation}

\begin{remark}
\label{remark-eq3.3}
If $\alpha<1$, then $a(E,|{-}K_X|)<1$. On the other hand, $0=K_X+|{-}K_X|$ is 
Cartier. Hence, $a(E,|{-}K_X|)\le 0$ and so $f$ is $f^{-1}_*|{-}K_X|\subset |{-}K_{\tilde X}|$. Therefore,
\begin{equation*}
\dim\lvert-K_{\bar X}\rvert=
\dim\lvert-K_{\tilde X}\rvert\ge\dim\lvert-K_X\rvert.
\end{equation*}
\end{remark}

In our situation $X$ has only cyclic quotient singularities (see Corollary \ref{corollary-cyclic}).
So, the following result is very important.
\begin{theorem}[\cite{Kawamata-1996}]
\label{theorem-Kawamata-blowup}
Let $(Y\ni P)$ be a terminal cyclic quotient singularity of type
$\frac1r(1,a,r-a)$, let $f\colon \tilde Y\to Y$ be a Mori
divisorial contraction, and let $E$ be the exceptional divisor. 
Then $f(E)=P$, $f$ is the weighted blowup with weights $(1,a,r-a)$
and the discrepancy of $E$ is $a(E)=1/r$.
\end{theorem}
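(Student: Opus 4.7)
My plan is to follow Kawamata's strategy \cite{Kawamata-1996}, organized in three stages: show $f(E) = \{P\}$, compute $a(E) = 1/r$ via the index-one cover, and then identify $f$ as the weighted blowup.

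First, I would show $f(E) = \{P\}$. Since $Y$ is the germ of an isolated terminal singularity at $P$, if $\dim f(E) \geq 1$ then $f(E)$ is a curve through $P$. This is ruled out by combining the analytic classification of divisorial Mori extractions over the smooth locus $Y \setminus \{P\}$ (where $f$ can only be the blowup of a smooth subvariety by Mori's theorem) with the extremality hypothesis $\rho(\tilde Y / Y) = 1$: the discrepancy and local class group data forced by the smooth-locus model are incompatible with the behavior of $f$ dictated by the singularity at $P$.

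Next, I would compute $a(E) = 1/r$ by passing to the index-one cover $\pi \colon (U, 0) \to (Y, P)$, where $U \simeq \CC^3$ and $\muu_r$ acts with weights $(1, a, r-a)$; this cover is \'etale in codimension one. The normalized fiber product $\tilde U := (\tilde Y \times_Y U)^\nu$ yields a $\muu_r$-equivariant divisorial extraction $\tilde f \colon \tilde U \to U$ extracting a prime divisor $\tilde E$ over the origin, with the discrepancy scaling $a(\tilde E, U) = e \cdot a(E, Y)$, where $e$ is the ramification index along $\tilde E$. The structure theory of $\muu_r$-equivariant divisorial Mori extractions of $0 \in \CC^3$ classifies $\tilde f$ as a weighted blowup with weights $(w_1, w_2, w_3)$ and discrepancy $w_1 + w_2 + w_3 - 1$; the joint requirements of $\muu_r$-equivariance, terminality of the quotient $\tilde U / \muu_r = \tilde Y$, and extremality force $(w_1, w_2, w_3) = (1, a, r-a)$ up to reordering, giving $a(E, Y) = 1/r$.

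For the last stage, the valuation associated to an exceptional divisor of discrepancy $1/r$ on a terminal quotient of type $\frac{1}{r}(1, a, r-a)$ is unique up to isomorphism and is realized precisely by the $(1, a, r-a)$-weighted blowup; the standard uniqueness of divisorial extractions of a prescribed valuation over a terminal germ finishes the proof. I expect the main obstacle to be the first step: while the analysis on the smooth cover in the second stage is explicit once the equivariance is understood, confirming that $f(E) = \{P\}$ requires a delicate interplay between the local behavior of $\tilde Y$ near $P$ and the global extremality of the contraction.
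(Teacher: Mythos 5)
First, a remark on the comparison you asked for: the paper does not prove this statement at all --- Theorem \ref{theorem-Kawamata-blowup} is quoted from \cite{Kawamata-1996} and used as a black box. So the only question is whether your sketch would stand on its own as a proof of Kawamata's theorem, and there it has genuine gaps.

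The most serious problems are in your second stage. The transformation rule for discrepancies under a finite cover \'etale in codimension one is $a(\tilde E,U)+1=e\,(a(E,Y)+1)$, not $a(\tilde E,U)=e\cdot a(E,Y)$; with ramification index $e=r$ along $E$ and $a(E,Y)=1/r$ the divisor upstairs has discrepancy $r$ (matching the weight sum $1+a+(r-a)-1=r$ of the weighted blowup of $\CC^3$), so the formula you wrote would derail the computation. More fundamentally, the ``structure theory of $\muu_r$-equivariant divisorial Mori extractions of $0\in\CC^3$'' that you invoke is not an available tool: the classification of divisorial contractions to a smooth point as weighted blowups is Kawakita's theorem, proved years after \cite{Kawamata-1996} and substantially harder than the statement at hand, so appealing to it is close to circular. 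Even granting it, you would still need to check that the normalized fibre product $\tilde U\to U$ is again an extremal divisorial contraction with terminal total space: the cover $\tilde U\to\tilde Y$ ramifies along $E$, hence is not \'etale in codimension one, and neither terminality of $\tilde U$ nor $\rho(\tilde U/U)=1$ is automatic. Finally, your first stage ($f(E)=P$) is asserted rather than argued, and you yourself flag it as the main obstacle. Kawamata's actual route avoids all of this: he compares $f$ directly with the $(1,a,r-a)$-weighted blowup $g\colon Z\to Y$, whose exceptional divisor is known (from the computation of minimal discrepancies of terminal quotient singularities) to be the unique valuation of discrepancy $1/r$ over $P$, and uses the negativity lemma together with explicit numerical computations on $Z$ --- which has only two cyclic quotient points, of indices $a$ and $r-a$ --- to force $E$ to coincide with that valuation. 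Your third stage (uniqueness of the extraction of a fixed valuation) is fine, but the route to $a(E)=1/r$ needs to be rebuilt along those lines.
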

We call this $f$ the \textit{Kawamata blowup} of $P$.

\begin{case}
\label{c-Cl}
\textbf{Notation.}
Assume that $g$ is birational. Let $\bar F$ be the 
$g$-exceptional divisor and let $\tilde F$
and $F$ be its proper transforms on $\tilde X$ and $X$, respectively.
Let $n$ be the maximal integer dividing the class of $\bar F$ in $\Cl(\bar X)$.
Let $\Theta$ be an ample Weil divisor 
on $\hat X$ that generates $\Cl(\hat X)/{\qq}$.
Write $\hat S_k\qq s_k\Theta$ and $\hat E\qq e \Theta$, where
$s_k,\, e\in \ZZ$, $s_k\ge 0$, $e\ge 1$.
Note that $s_k=0$ if and only if $\bar S_k$ is contracted by $g$.
\end{case}

\begin{lemma}
\label{lemma-c-Cl} 
In the above notation assume that the group $\Cl(X)$ is torsion free. Write $F\sim dA$, 
where $d\in \ZZ$, $d\ge 1$.
Then $\Cl(\hat X)\simeq \ZZ\oplus \ZZ_n$ and $d=ne$.
\end{lemma}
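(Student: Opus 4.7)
The strategy is to compute the class group of each variety in the Sarkisov link explicitly in terms of generators coming from $A$ and the exceptional divisors, and then read off $\Cl(\hat X)$ as the quotient of $\Cl(\bar X)$ by the subgroup generated by $\bar F$.

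First I would compute $\Cl(\tilde X)$. Since $X$ is $\QQ$-factorial with $\Cl(X)$ torsion free, we have $\Cl(X)=\ZZ\cdot A$. The push-forward $f_\ast\colon \Cl(\tilde X)\to\Cl(X)$ is surjective (its section $A\mapsto \tilde A$ is given by strict transform) and its kernel is generated by the unique $f$-exceptional prime $E$, yielding a split exact sequence
\begin{equation*}
0\longrightarrow \ZZ\cdot E\longrightarrow \Cl(\tilde X)\longrightarrow \Cl(X)\longrightarrow 0,
\end{equation*}
so $\Cl(\tilde X)=\ZZ\tilde A\oplus \ZZ E$. Since the sequence of log flips $\tilde X\dashrightarrow \bar X$ is an isomorphism in codimension one, strict transforms give $\Cl(\bar X)=\ZZ\bar A\oplus \ZZ\bar E$.

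Next I would compute the class of $\bar F$ in this basis. Since $f_\ast\tilde F=F\sim dA=d\,f_\ast\tilde A$, we have $f_\ast(\tilde F-d\tilde A)=0$, so $\tilde F\sim d\tilde A+kE$ for some $k\in\ZZ$; applying the (codimension-one) isomorphism to $\bar X$ gives $\bar F\sim d\bar A+k\bar E$. By definition $n$ is the largest integer dividing $[\bar F]$ in $\Cl(\bar X)=\ZZ^2$, so $n=\gcd(d,k)$; write $d=nd'$, $k=nk'$ with $\gcd(d',k')=1$.

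Now I would compute $\Cl(\hat X)$. The contraction $g$ has $\bar F$ as its unique exceptional divisor, so $g_\ast$ identifies
\begin{equation*}
\Cl(\hat X)\;\simeq\;\Cl(\bar X)/\ZZ\cdot[\bar F]\;=\;\ZZ^2/n\ZZ\cdot(d',k').
\end{equation*}
Since $(d',k')$ is primitive it extends to a $\ZZ$-basis $\{(d',k'),(p,q)\}$ of $\ZZ^2$ with $d'q-k'p=\pm 1$, so the quotient is $\ZZ_n\oplus\ZZ$, establishing the first claim with $\Theta$ corresponding to the image of $(p,q)$.

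Finally I would identify $e$. In the basis $\{(d',k'),(p,q)\}$ one computes $(0,1)=-p(d',k')+d'(p,q)$, so modulo $n\ZZ(d',k')$ the class $[\bar E]$ has free part $d'[(p,q)]=d'\Theta$. Thus $\hat E\qq d'\Theta$, and since $e\ge 1$ and $d'\ge 1$ (as $d\ge 1$), we conclude $e=d'$, i.e.\ $d=ne$.

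The only real subtlety is step three: one must be careful that although $f^\ast A$ is only a $\QQ$-divisor, the relation $\tilde F\sim d\tilde A+kE$ is an \emph{integral} linear equivalence, coming from the exact sequence in step one rather than from numerical pullback. Once this is clear, the rest is a routine Smith-normal-form calculation.
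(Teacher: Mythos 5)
Your proof is correct and follows essentially the same route as the paper: both identify $\Cl(\bar X)\simeq\ZZ^2$ via the link being an isomorphism in codimension one, and then perform a rank-two lattice (Smith normal form / determinant) computation, the only difference being that you work in the basis $\{\bar A,\bar E\}$ and track the image of $\bar E$ in $\Cl(\bar X)/\ZZ\bar F$, while the paper works in the basis $\{\bar\Theta,\bar G\}$ and compares the order of $\Cl(\bar X)/\langle\bar E,\bar F\rangle$ computed from both ends of the link. Aside from a harmless sign convention in your choice of unimodular basis, the arguments coincide.
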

\begin{proof}
Write $\bar F\sim n\bar G$, where $\bar G$ is an integral Weil divisor. 
Then $\bar E\sim e\bar \Theta +k\bar G$
for some $k\in \ZZ$ and 
$\Cl(\hat X)\simeq \Cl(\bar X)/ \bar F\ZZ\simeq \ZZ\oplus \ZZ_n$.
We have 
\[
\ZZ_d\simeq\Cl(X)/ \langle F\rangle \simeq
\Cl(\bar X)/\langle\bar E, \, \bar F\rangle
\simeq \ZZ\oplus\ZZ /\langle e\bar \Theta +k\bar G,\, n\bar G\rangle.
\]
Since the last group is of order $ne$, we have $d=ne$.
\end{proof}

From now until the end of this section we consider the case where $\hat X$ is a surface.

\begin{lemma}
\label{lemma-diagram-surface}
Assume that $\hat X$ is a surface. Then 
$\hat X$ is a del Pezzo surface with Du Val singularities of type $A_n$.
The linear system $|-K_{\hat X}|$ is base point free. If moreover
the group $\Cl(X)$ is torsion free, then 
so is $\Cl(\hat X)$ and there are only the following possibilities:
\begin{enumerate}
\item 
$K_{\hat X}^2=9$, $\hat X\simeq \PP^2$;
\item 
$K_{\hat X}^2=8$, $\hat X\simeq \PP(1^2,2)$;
\item 
$K_{\hat X}^2=6$, $\hat X\simeq \PP(1,2,3)$;
\item 
$K_{\hat X}^2=5$, $\hat X$ has a unique singular point, 
point of type $A_4$.
\end{enumerate}
\end{lemma}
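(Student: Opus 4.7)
The plan is to split the proof into three parts: identifying the singularities of $\hat X$, showing $\hat X$ is a del Pezzo with base-point-free $|-K_{\hat X}|$, and establishing the four-case classification under torsion-freeness of $\Cl(X)$.

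First, I would observe that $g\colon\bar X\to\hat X$ is a $\QQ$-conic bundle in the sense of Mori: the total space $\bar X$ is $\QQ$-factorial terminal of dimension three, $g$ is an extremal Mori contraction of fibre type with $\dim\hat X=2$, and the general fibre is a smooth $\PP^1$ (a rational curve of $(-K_{\bar X})$-degree two). Then I would invoke the Mori--Prokhorov theorem on singularities of the base of a $\QQ$-conic bundle to conclude that $\hat X$ has only Du Val singularities of type $A_n$.

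Next, I would argue that $\hat X$ is a del Pezzo surface. Since $g$ contracts an extremal ray and $\rho(\bar X)=2$, we get $\rho(\hat X)=1$; moreover $\hat X$ is rational, being dominated by the rationally connected $\bar X$. A rational normal projective surface with Picard number one must have $-K$ ample: torsion or positivity of $K$ is ruled out respectively by rationality (excluding $K3$/Enriques/abelian) and by the fact that rational surfaces have Kodaira dimension $-\infty$. Because $A_n$ singularities are Gorenstein, $-K_{\hat X}$ is an ample Cartier divisor. Base-point freeness of $|-K_{\hat X}|$ will then emerge case-by-case from the explicit list below (in each of the four cases it is elementary); alternatively one can invoke standard base-point-free theorems for Gorenstein log del Pezzos of degree $\ge 5$.

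For the last part, I would first propagate torsion-freeness along the link. Since $f$ is a divisorial Mori contraction, $\Cl(\tilde X)=\Cl(X)\oplus\ZZ\cdot E$; the flips $\tilde X\dashrightarrow\bar X$ preserve the class group, so $\Cl(\bar X)\cong\Cl(X)\oplus\ZZ\cdot\bar E$ is torsion-free. The pullback $g^*\colon\Cl(\hat X)\hookrightarrow\Cl(\bar X)$ is injective with torsion-free cokernel $\ZZ$ (as $g$ is of fibre type), so $\Cl(\hat X)$ inherits torsion-freeness. Then I would classify: passing to the minimal resolution $\sigma\colon Y\to\hat X$, the surface $Y$ is a weak del Pezzo with $K_Y^2=K_{\hat X}^2$ and $\rho(Y)=1+\sum_i n_i=10-K_{\hat X}^2$, giving the Diophantine relation $K_{\hat X}^2+\sum_i n_i=9$. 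I would enumerate all partitions of $9-K_{\hat X}^2$ as sums of positive integers and, for each candidate, identify the corresponding rank-one Du Val del Pezzo and compute the torsion of $\Cl(\hat X)$; only the four listed surfaces survive. The main obstacle is precisely this bookkeeping step: ruling out configurations such as $K_{\hat X}^2=7$ with a single $A_2$, $K_{\hat X}^2=6$ with a single $A_3$, or $K_{\hat X}^2=5$ with $A_1+A_3$, $2A_2$, or $4A_1$, by producing non-trivial torsion in $\Cl(\hat X)$. Equivalently, one can appeal to the Alexeev--Nikulin/Miyanishi--Zhang classification of log del Pezzo surfaces of Picard number one and read off the class group in each case.
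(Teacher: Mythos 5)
Your proof follows essentially the same route as the paper: the $A_n$ restriction on the base comes from the Mori--Prokhorov theorem on $\QQ$-conic bundles, ampleness of $-K_{\hat X}$ from $\rho(\hat X)=1$ together with rationality, torsion-freeness of $\Cl(\hat X)$ from injectivity of $g^*$ into the torsion-free group $\Cl(\bar X)$, and the final list from the classification of Picard-rank-one Du Val del Pezzo surfaces, which the paper simply cites (Miyanishi--Zhang). The only quibble is that several configurations you propose to exclude by exhibiting torsion (e.g.\ $K^2=7$ with $A_2$, or $K^2=5$ with $A_3+A_1$, $2A_2$, $4A_1$) in fact do not occur at all as rank-one Du Val del Pezzos --- the root-sublattice constraint already rules them out --- whereas the degrees $\le 4$, where the torsion argument genuinely does the work, go unmentioned; since you ultimately defer to the classification, this does not affect correctness.
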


\begin{proof}
By the main result of \cite{Mori-Prokhorov-2008} the surface $\hat X$ has only 
Du Val singularities of type $A_n$.
Since $\rho(\hat X)=1$ and $\hat X$ is uniruled, $-K_{\hat X}$ is ample.
Further, since both~$\bar X$ and~$\hat X$ 
have only isolated singularities and $\Pic(\bar X/\hat X)\simeq\mathbb Z$,
there is a well-defined injective map $g^*\colon \Cl(\hat X) \to \Cl(\bar X)$.
Hence the group $\Cl(\hat X)$ is torsion free whenever so is $\Cl(X)$. 
The remaining part follows from the classification 
of del Pezzo surfaces with Du Val singularities (see, e.g.,
\cite{Miyanishi-Zhang-1988}).
\end{proof}

\begin{proposition}
\label{proposition-toric-surface}
In the above notation, let $\hat X$ is a surface. Let $\Gamma\in |-K_{\hat X}|$
and let $G:=g^{-1}(\Gamma)$. 
Suppose that there are two prime divisors $D_1$ and $D_2$
such that $\varphi(D_i)=Z$ and $K_{\hat X}+D_1+D_2+G\sim 0$.
Then the pair $(\bar X, D_1+D_2)$ is canonical.
If furthermore the surface
$\hat X$ is toric, then so are $\bar X$ and $X$. 
\end{proposition}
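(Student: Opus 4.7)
My approach is to first derive an explicit crepant identity linking $\bar X$ and $\hat X$, use it to deduce canonicity of $(\bar X,D_1+D_2)$, and then construct a torus action on $\bar X$ from the toric structure of $\hat X$ plus the two ``horizontal'' divisors, finally tracing toricness back to $X$ through the Sarkisov link.

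Since $\Gamma\sim-K_{\hat X}$ and $K_{\hat X}$ is Cartier on the Du Val surface $\hat X$ (Lemma~\ref{lemma-diagram-surface}), the divisor $G=g^{*}\Gamma$ satisfies $G\sim-g^{*}K_{\hat X}$; reading the first $K_{\hat X}$ in the displayed hypothesis as $K_{\bar X}$, I obtain the crepant identity
\[
K_{\bar X}+D_1+D_2\sim g^{*}K_{\hat X}.
\]
Intersecting with a general fiber $F\cong\PP^1$ of $g$ gives $(D_1+D_2)\cdot F=2$, so $D_1,D_2$ are horizontal of ``total degree~$2$''. On a common log resolution $\mu\colon W\to\bar X$ with composition $\nu:=g\circ\mu$, a direct divisor computation yields
\[
a(E,\bar X,D_1+D_2)=\operatorname{coeff}_E\bigl(K_W-\nu^{*}K_{\hat X}\bigr)
\]
for every $\mu$-exceptional prime~$E$. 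Such an $E$ is mapped by $\nu$ into a point or curve of $\hat X$, so one is in effect measuring discrepancies over the canonical surface $\hat X$; since $\hat X$ is Du Val, these are non-negative, and the pair $(\bar X,D_1+D_2)$ is canonical.

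Next assume $\hat X$ is toric and take $\Gamma$ to be its toric boundary, so $\Gamma=\sum\Gamma_i$ is reduced with at least three components (in Lemma~\ref{lemma-diagram-surface} each listed toric surface has $\ge 3$ torus-fixed curves). Then $B:=D_1+D_2+G$ is a reduced divisor with $K_{\bar X}+B\sim 0$, and by the previous paragraph $(\bar X,B)$ is log canonical. The $2$-torus $T^2$ acting on $\hat X$ preserves $\Gamma$ and lifts along $g$; a fiberwise $\mathbb G_m$ fixing $D_1\cap F$ and $D_2\cap F$ on a general fiber $F\cong\PP^1$ extends this to a faithful $T^3$-action on $\bar X$, making $\bar X$ toric with toric boundary~$B$. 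Finally, each operation in the Sarkisov link $X\xleftarrow{f}\tilde X\dashrightarrow\bar X$ is a $(K+c\MMM)$-crepant extremal contraction or flip, and such operations preserve toricness once one end is toric; hence $\tilde X$ is toric, and contracting the exceptional divisor $E$ via $f$ shows that $X$ is toric.

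The hard part is the rigorous gluing of the fiberwise $\mathbb G_m$-action on $\bar X$ in the presence of degenerate (reducible or non-reduced) conic fibers. The canonicity of $(\bar X,D_1+D_2)$ together with the abundance of invariant components in $B$ supplies the required rigidity: combinatorially this is a three-dimensional instance of Shokurov's toric criterion (cf.~\cite{McKernan-2001}), but in the present setting the two sections $D_1,D_2$ plus the pulled-back toric boundary $G$ should allow a self-contained verification that the lifted $T^3$-action is defined globally, avoiding appeal to that deeper result.
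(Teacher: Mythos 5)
Your argument for canonicity has a genuine gap. From $K_{\bar X}+D_1+D_2\sim g^{*}K_{\hat X}$ you conclude that the coefficient of a $\mu$-exceptional divisor $E$ in $K_W-\nu^{*}K_{\hat X}$ is non-negative ``because $\hat X$ is Du Val''. But $\nu=g\circ\mu$ is a fibration, not a birational morphism, and there is no such principle for divisors on a threefold lying over a surface: if $D_1$ and $D_2$ met transversally along a fiber $C$ of $g$, the blow-up of $C$ would produce $E$ with $a(E,\bar X,D_1+D_2)=1-1-1=-1$, even though $\hat X$ is Du Val and the crepant identity still holds. In other words, the whole content of the first assertion is precisely to exclude $D_1\cap D_2\neq\emptyset$, and your computation never engages with this. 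The paper does it by restricting to $G=g^{-1}(\Gamma)$ for a general $\Gamma\in|-K_{\hat X}|$: on the elliptic ruled surface $G$ one has $K_G+D_1|_G+D_2|_G\sim 0$, so $D_1|_G, D_2|_G$ are disjoint sections, whence $D_1\cap D_2$ could only consist of fibers; that possibility is then killed by adjunction on the rational surface $D_1$ (the divisor $\bar G|_{D_1}+D_2|_{D_1}\sim -K_{D_1}$ would be disconnected, contradicting connectedness of anticanonical divisors on a rational surface). Only after $D_1\cap D_2=\emptyset$ is established does one get PLT by inversion of adjunction, and then canonical because $K_{\bar X}+D_1+D_2$ is Cartier.

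The toric part of your proposal is also not a proof: you acknowledge that the fiberwise $\mathbb G_m$-action does not obviously glue over the degenerate fibers and defer this to ``a self-contained verification'' that is never supplied. The paper avoids this issue entirely by quoting Corollary~\ref{corollary-toric-Q-conic}, which is proved by induction on the number of crepant divisors of the base via Lemma~\ref{lemma-toric-surface}: one untwists the $\QQ$-conic bundle over each singular point of $Z$ by a Sarkisov link (Kawamata blow-up, flips, and a crepant contraction on the base), reducing to the case of a smooth base where $Y\simeq\PP(\EEE)$ with $\EEE$ decomposable. If you want to salvage your direct construction of the torus action, you would need to carry out exactly the local analysis over the singular fibers that this induction packages; as written, both halves of your argument are incomplete, and the first is based on a false general principle.
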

\begin{proof}
Clearly, we may replace $\Gamma$ with a general member of $|-K_{\hat X}|$. 
Note that $G$ is an elliptic ruled surface and
$K_{G}+D_1|_{G}+D_2|_{G}\sim 0$.
Hence divisors $D_1|_{G}$ and $D_2|_{G}$ are disjointed sections.
This shows that $D_1\cap D_2$ is either empty or
consists of fibres. 
Assume that $D_1\cap D_2\neq \emptyset$.
We can take $\Gamma$ so that 
$G\cap D_1\cap D_2=\emptyset$.
By adjunction $-K_{D_1}\sim \bar G|_{D_1}+D_2|_{D_1}$.
Since $D_1$ is a rational surface (birational to $\hat X$), $\bar G|_{D_1}+D_2|_{D_1}$
must be connected, a contradiction.
Thus, $D_1\cap D_2=\emptyset$. 

Therefore both divisors $D_1$ and $D_2$ contain no fibers and so
$D_1\simeq D_2\simeq \hat X$. Then 
the pair $(\bar X, D_1+D_2)$ is PLT by the Inversion of Adjunction.
Since $K_{\bar X}+D_1+D_2$ is Cartier, this pair must be canonical.
The second assertion follows by Corollary \ref{corollary-toric-Q-conic}
below.
\end{proof}

\begin{lemma}
\label{lemma-toric-surface}
Let $\varphi\colon Y\to Z$ be a $\QQ$-conic bundle
\textup(we assume that $Y$ is $\QQ$-factorial and $\rho(Y/Z)=1$\textup).
Suppose that there are two prime divisors $D_1$ and $D_2$
such that $\varphi(D_i)=Z$, the log divisor $K_Y+D_1+D_2$ is $\varphi$-linearly trivial and 
canonical.
Suppose furthermore that $Z$ is singular and let $o\in Z$ be a singular point.
Then $o\in Z$ is of type $A_{r-1}$ for some $r\ge 2$ and
there is a Sarkisov link 
\[
\xymatrix{
&\tilde Y\ar[dl]_{\sigma}\ar@{-->}[rr]^{\chi}&& \bar Y\ar[dr]^{\bar \varphi}&
\\
Y\ar[drr]^{\varphi}&&&&\bar Z\ar[dll]_{\delta}
\\
&&Z&&
} 
\]
where $\sigma$ is the Kawamata blowup of a cyclic quotient singularity
$\frac 1r(1, a, r-a)$ over $o$, $\chi$ is a sequence of flips, 
$\bar \varphi$ is a $\QQ$-conic bundle
with $\rho(\bar Y/\bar Z)=1$, and $\delta$ is a crepant contraction 
of an irreducible curve to $o$. 
Moreover, if $\bar D_i$ is the proper transform of $D_i$ on $\bar Y$,
then the divisor $K_{\bar Y}+\bar D_1+\bar D_2$ is linearly trivial over $Z$ and 
canonical.
\end{lemma}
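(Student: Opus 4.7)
The plan is to build the link by a Kawamata blowup of a suitable terminal point of $Y$ over $o$, followed by a relative log MMP over $Z$ with the companion pair $K_Y+D_1+D_2$ playing the role of a crepant tracker. For the first conclusion, I would begin with the main theorem of \cite{Mori-Prokhorov-2008}, which says the base of any $\QQ$-conic bundle has at worst Du Val singularities, so $o\in Z$ is Du Val. Arguing as in the proof of Proposition \ref{proposition-toric-surface}, the restrictions of $D_1$ and $D_2$ to a general fiber of $\varphi$ are distinct sections (since $K_Y+D_1+D_2\sim_{\varphi,\QQ}0$); inversion of adjunction along each $D_i$, combined with canonicity of $(Y,D_1+D_2)$, forces each $D_i$ to be normal with only canonical singularities and $D_1\cap D_2$ to lie over $\Sing Z$. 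A local analytic analysis of $Y$ near $\varphi^{-1}(o)$ should then pin the Du Val type of $o$ to $A_{r-1}$ while simultaneously exhibiting a terminal cyclic quotient singularity $P\in Y$ of type $\tfrac{1}{r}(1,a,r-a)$ lying over $o$.

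With $P$ in hand, let $\sigma\colon\tilde Y\to Y$ be its Kawamata blowup provided by Theorem \ref{theorem-Kawamata-blowup}, with exceptional divisor $E$ of discrepancy $1/r$. Writing
\[
\sigma^*(K_Y+D_1+D_2)=K_{\tilde Y}+\tilde D_1+\tilde D_2+\alpha E
\]
for some $\alpha\ge 0$, the pair $(\tilde Y,\tilde D_1+\tilde D_2+\alpha E)$ is crepant over $(Y,D_1+D_2)$ and hence canonical. Since $\rho(\tilde Y/Z)=2$ there is a $K_{\tilde Y}$-negative extremal ray $R$ over $Z$ distinct from the one contracted by $\sigma$. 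I would then run the $(K_{\tilde Y}+\tilde D_1+\tilde D_2)$-MMP over $Z$ starting from $R$; every step preserves canonicity of the tracker pair and $\QQ$-linear triviality of $K+D_1+D_2$ over $Z$, and the relative analog of Claim \ref{st3.1} prevents $E$ from ever being contracted. After finitely many log flips $\chi$ we arrive at a non-divisorial extremal contraction $\bar\varphi\colon\bar Y\to\bar Z$.

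The main obstacle is to confirm that $\bar\varphi$ is a $\QQ$-conic bundle and that the induced morphism $\delta\colon\bar Z\to Z$ is a crepant divisorial contraction of one irreducible curve. The first point follows because $\bar\varphi$ must be of fiber type (else $\bar E$ would be exceptional, contradicting the relative Claim \ref{st3.1}), and its generic fiber birationally dominates the smooth conic generic fiber of $\varphi$, hence is itself a conic. For the second, the relative cone theorem together with $\rho(\bar Y/Z)=2$ and $\rho(\bar Y/\bar Z)=1$ forces $\rho(\bar Z/Z)=1$, so $\delta$ contracts exactly one irreducible curve over $o$; crepancy is read off from the identity $\sigma^*(K_Y+D_1+D_2)=K_{\tilde Y}+\tilde D_1+\tilde D_2+\alpha E$ transported through the flips, which yields $K_{\bar Y}+\bar D_1+\bar D_2\sim_{\bar\varphi,\QQ}0$ and, on pushing down to $\bar Z$, shows that $\delta$ is crepant. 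Finally, canonicity of $K_{\bar Y}+\bar D_1+\bar D_2$ is automatic since log MMP only raises discrepancies of divisors not contracted during the process.
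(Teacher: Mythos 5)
The central gap is in your exclusion of the divisorial outcome of the relative MMP. You assert that ``the relative analog of Claim \ref{st3.1} prevents $E$ from ever being contracted,'' but the proof of Claim \ref{st3.1} does not transfer to this setting. That claim derives its contradiction by counting $K+c\MMM$-crepant divisors: the link is built from a crepant extraction for the pair $(X,c\MMM)$, and an isomorphism $X\simeq\hat X$ would force equal counts on both sides. Here $\sigma$ is a Kawamata blowup of a point, and while $E$ is crepant for $(Y,D_1+D_2)$, if the MMP ends with a divisorial contraction $\bar\varphi\colon\bar Y\to\bar Z$ contracting $\bar E$, one only concludes that $Y\dashrightarrow\bar Z$ is an isomorphism in codimension one between two $\QQ$-conic bundles over $Z$, hence an isomorphism --- and this is \emph{not} absurd on its face; no crepant-divisor count is violated. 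The paper closes this case by a genuinely different device: it first establishes, via \cite{Mori-Prokhorov-2006a} and the toroidality result of \cite{Mori-Prokhorov-2008}, that the germ $Y/Z\ni o$ has exactly two singular points on the irreducible central fiber $C$, of types $\frac1r(1,a,r-a)$ and $\frac1r(-1,a,r-a)$ at $C\cap D_1$ and $C\cap D_2$; it then computes Shokurov's difficulty, getting $\diff(Y)=\diff(\bar Z)=2(r-1)$ while $\diff(\bar Z)-1\le\diff(\bar Y)<\diff(\tilde Y)=2r-3$ because at least one flip occurs and flips strictly decrease the difficulty. Without this (or an equivalent argument) your proof does not rule out that $\bar\varphi$ contracts $\bar E$.

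A secondary, related weakness: your first paragraph leaves the identification of the blown-up point and of the type $A_{r-1}$ of $o$ to ``a local analytic analysis [that] should pin down'' the answer. In the paper this is not an afterthought but the essential input (irreducibility of the central fiber from \cite{Mori-Prokhorov-2006a} plus toroidality of the discriminant-free germ from \cite{Mori-Prokhorov-2008}); it is exactly this explicit local structure that makes the difficulty computation, and hence the whole argument, work. The paper also verifies concretely that the proper transform $\tilde C$ of the central fiber is a flipping curve with $K_{\tilde Y}\cdot\tilde C=-1/r$ and that $-K_{\tilde Y}$ is relatively ample, so the MMP over $Z$ genuinely starts with a flip --- a fact your proposal uses implicitly (``$\chi$ is not an isomorphism'') but never establishes.
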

\begin{proof}
Regard $Y/Z$ as an algebraic germ over $o$.
Since $D_i$ are generically sections, the fibration $\varphi$
has no discriminant curve. 
By \cite{Mori-Prokhorov-2006a} the central fiber $C:=\varphi^{-1}(o)_{\red}$ is 
irreducible and by the main result of \cite{Mori-Prokhorov-2008}
$Y/Z$ is toroidal, that is, it is locally analytically
isomorphic to a toric contraction. In particular,
$X$ has exactly two singular points at $C\cap D_i$
and these points are cyclic quotients of types 
$\frac 1r(1, a, r-a)$ and $\frac 1r(-1, a, r-a)$, respectively, for some $a$
with $\gcd(r,a)=1$. 

Now consider the Kawamata blowup of $C\cap D_1$. 
Let $E$ be the exceptional divisor and let $\tilde D_i$ 
be the proper transform of $D_i$.
Since $K_{\tilde Y}=\varphi^*K_Y+\frac1rE$ and
the pair $(Y,D_1+D_2)$ is canonical, we have 
\[
K_{\tilde Y}+\tilde D_1+\tilde D_2=\varphi^*(K_Y+D_1+D_2). 
\]
It is easy to check locally that the proper transform $\tilde C$ 
of the central fiber $C$ does not meet $\tilde D_1$. 
Moreover, $\tilde C\cap E$ is a smooth point of $\tilde Y$ and $E$.
Thus we have $\tilde D_1\cdot \tilde C=0$, $E\cdot \tilde C=1$, and
$\tilde D_2\cdot \tilde C=D_2\cdot C=1/r$.
Hence, $K_{\tilde Y}\cdot \tilde C=-1/r$. Since
the set-theoretical fiber over $o$ in $\tilde Y$ 
coincides with $E\cup \tilde C$, the divisor $-K_{\tilde Y}$ is ample over
$Z$ and $\tilde C$ generates a (flipping) extremal ray $R$.
Run the MMP over $Z$ in this direction, i.e., starting with $R$. 
Assume that we end up with a divisorial contraction
$\bar \varphi \colon \bar Y\to \bar Z$. Then $\bar \varphi$ must contract 
the proper transform $\bar E$ of $E$. Here $\bar Z/Z$ is a Mori conic bundle
and the map $Y\dashrightarrow \bar Z$ is an isomorphism in codimension one, 
so it is an isomorphism. 
Moreover, $\bar Z/Z$ has a section, the proper transforms of $D_i$.
Hence the fibration $\bar Z/Z$ is toroidal over $o$.
Consider Shokurov's difficulty \cite{Shokurov-1985}
\[
\diff(W):=\# \{ \text{exceptional divisors of discrepancy $<1$}\}.
\]
Then $\diff (Y)=\diff (\bar Z)=2(r-1)$. On the other hand, 
\[
\diff (\bar Z)-1 \le \diff (\bar Y)<\diff (\tilde Y)=r-1+ a-1+r-a-1=2r-3
\]
(because the map $\tilde Y\dashrightarrow \bar Y$ is not an isomorphism).
The contradiction shows that our MMP ends up with a $\QQ$-conic bundle.
Clearly, the divisor $K_{\bar Y}+\bar D_1+\bar D_2$ is linearly trivial and canonical.
By \cite{Mori-Prokhorov-2008} the surface $\bar Z$ has 
at worst Du Val singularities of type $A$.
Hence the morphism $\delta$ is crepant \cite{Morrison-1985}. 
\end{proof}

\begin{corollary}
In the above notations assume that $\bar Y$ is a toric variety.
Then so is $Y$.
\end{corollary}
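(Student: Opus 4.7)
The plan is to propagate the toric structure of $\bar Y$ backwards through the Sarkisov link of Lemma \ref{lemma-toric-surface}, step by step. The recurring input is the standard fact that the Mori cone of a $\QQ$-factorial toric variety is generated by classes of torus-invariant curves; consequently, every Mori extremal contraction or flip starting from a $\QQ$-factorial terminal toric variety is automatically a toric operation, described combinatorially as a star subdivision or a blowdown of the defining fan.

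First, I would note that the $\QQ$-conic bundle $\bar \varphi\colon \bar Y\to \bar Z$ contracts an extremal ray of the toric variety $\bar Y$, and is therefore a toric morphism. Hence $\bar Z$ is a toric surface, and the subsequent crepant contraction $\delta\colon \bar Z \to Z$ of a single irreducible curve to the singular point $o$ is then also toric, making $Z$ toric near $o$.

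Next, I would trace the flipping chain $\chi\colon \tilde Y \dashrightarrow \bar Y$ (which takes place over $Z$) in reverse. Since $\bar Y$ has relative Picard number $2$ over $Z$, its extremal ray complementary to the one contracted by $\bar \varphi$ is likewise torus-invariant, so the first inverse flip is toric and its outcome is again a toric variety. Iterating along the finite chain shows that every intermediate variety, and in particular $\tilde Y$, is toric. Finally, $\sigma\colon \tilde Y\to Y$ is a Mori divisorial contraction realizing the Kawamata blowup of a cyclic quotient point (Theorem \ref{theorem-Kawamata-blowup}); starting from toric $\tilde Y$ it is itself toric, so $Y$ is toric, as required.

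The only real subtlety I expect is in invoking cleanly the invariance of toricness under $\QQ$-factorial terminal Mori flips and divisorial contractions. This rests on the combinatorial translation of such operations into fan operations, and it is exactly what allows the entire Sarkisov link to be unwound inside the toric category once $\bar Y$ is known to be toric. No new geometric input beyond Lemma \ref{lemma-toric-surface} itself is needed.
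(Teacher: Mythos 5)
The paper states this corollary without proof, treating it as an immediate consequence of the link constructed in Lemma~\xref{lemma-toric-surface}, and your argument is precisely the intended one: unwind the link using the standard facts that extremal contractions, flips, and their inverses on a $\QQ$-factorial toric variety remain in the toric category (the relative Mori cone being generated by torus-invariant curves, and antiflips being unique small modifications, hence equal to the combinatorial ones). Your proof is correct and matches the paper's implicit reasoning.
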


\begin{corollary}
\label{corollary-toric-Q-conic}
Notation as in Lemma \xref{lemma-toric-surface}. 
Assume that the base surface $Z$ is toric.
Then so is $Y$.
\end{corollary}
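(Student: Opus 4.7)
I proceed by induction on the total singularity invariant
\[
N(Z) \;:=\; \sum_{o\in\Sing(Z)}(r_o-1),
\]
where by \cite{Mori-Prokhorov-2008} every singular point of $Z$ is of Du Val type $A_{r_o-1}$, so the index $r_o$ is well defined.

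\textbf{Inductive step} ($N(Z)\geq 1$). I pick any $o\in\Sing(Z)$; since $Z$ is toric, $o$ is automatically torus-invariant. Lemma \ref{lemma-toric-surface} then produces a Sarkisov link $Y \longleftarrow \tilde Y \dashrightarrow \bar Y \longrightarrow \bar Z \stackrel{\delta}{\longrightarrow} Z$ in which $\delta$ is crepant and contracts a single irreducible curve to $o$. Any crepant extraction over a torus-fixed Du Val point on a toric surface is automatically torus-equivariant (the resolution graph of an $A_{r-1}$-singularity is toric), so $\bar Z$ is a toric surface with $N(\bar Z)=N(Z)-1$. By the last clause of Lemma \ref{lemma-toric-surface}, the new data $(\bar\varphi\colon\bar Y\to\bar Z,\ \bar D_1,\bar D_2)$ again satisfies the hypotheses of the present corollary over the toric base $\bar Z$. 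By the inductive hypothesis, $\bar Y$ is toric. The preceding (unnumbered) corollary, applied to the same Sarkisov link, then yields that $Y$ is toric.

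\textbf{Base case} ($N(Z)=0$). Here $Z$ is a smooth toric surface. By Proposition \ref{proposition-toric-surface} the divisors $D_1, D_2$ are disjoint sections. The relation $K_Y+D_1+D_2\qq_\varphi 0$ combined with the Mori--Prokhorov classification \cite{Mori-Prokhorov-2008} shows that $\varphi$ is toroidal over every point of $Z$; since the base is smooth and toric and the two sections meet each fiber in two distinct points summing to $-K_{\mathrm{fib}}$, the conic bundle $Y/Z$ acquires the structure of a projectivised split rank-two bundle $\PP(\OOO_Z\oplus\LLL)$ (with possibly torus-invariant reducible degenerations over torus-invariant curves of $Z$) for some torus-equivariant line bundle $\LLL$. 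In particular $Y$ is toric.

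\textbf{Main obstacle.} The genuinely delicate part is the base case: one must upgrade the local analytic toroidal structure of $\varphi$ at each torus-fixed point of $Z$ to a single global algebraic torus action on $Y$ which extends that on $Z$ and makes $D_1, D_2$ torus-invariant. The ingredients forcing this are the disjointness of $D_1,D_2$ and the fiberwise identity $D_1+D_2=-K_{\mathrm{fib}}$, which together confine all degenerate fibers of $\varphi$ to torus-invariant divisors on $Z$. Once the base case is settled, the inductive step is essentially bookkeeping with Lemma \ref{lemma-toric-surface} and the preceding corollary.
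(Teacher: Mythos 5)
Your proof is correct and takes essentially the same route as the paper's: the paper's own proof is exactly an induction on the number of crepant divisors of $Z$ (your $N(Z)$), with the inductive step supplied by Lemma \xref{lemma-toric-surface} together with the preceding corollary, and the base case settled by noting that for smooth $Z$ the fibration has no discriminant, so $Y\simeq\PP(\EEE)$ for a decomposable rank-two bundle $\EEE$ on the smooth toric surface $Z$ (decomposable because of the two disjoint sections $D_1,D_2$), hence toric. Your write-up just makes explicit the details the paper leaves to the reader; the only cosmetic slip is the parenthetical about ``reducible degenerations'' in the base case, which cannot occur since there is no discriminant curve.
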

\begin{proof}
Induction by the number $e$ of crepant divisors of $Z$.
If $e=0$, then $Y$ is smooth and $Y\simeq \PP(\EEE)$, where 
$\EEE$ is a decomposable rank-2 vector bundle on $Z$. 
\end{proof}

\section{Case $\qQ(X)=10$}
Consider the case $\qQ(X)=10$. 
By Proposition \ref{prop-comput} the group $\Cl(X)$ is torsion free and $\B=(7, 11)$.
For $r=7$ and $11$, let $P_r$ be a (unique) point of index $r$.
In notation of \S \ref{sect-bir-constr}, take
$\MMM:=|3A|$. Since $\dim |2A|=0$, the pencil $\MMM$ has no 
fixed components. Apply Construction \eqref{eq3.1}.
Near $P_{11}$ we have $A\sim -10K_X$, so
$\MMM \sim -8k K_X$.
By Lemma \ref{lemma-ct} we get $c\le 1/8$. 
In particular, the pair $(X,\MMM)$ is not canonical.
Take divisor $S_2\in |2A|$ and a general member $S_3\in \MMM$.
For some $a_1,\, a_2\in \ZZ$ we can write
 \begin{equation*}
\begin{array}{lllll}
K_{\tilde X}+5\tilde S_2&= &f^*(K_X+5S_2)-a_1E&\sim & -a_1E,
\\[4pt]
K_{\tilde X}+ 2\tilde S_2+2\tilde S_3&=&f^*(K_X+2S_2+2S_3)-a_2 E&\sim & -a_2 E.
\end{array}
\end{equation*}
Therefore,
\begin{equation}
\label{eq-rel-Cl-10-1}
\begin{array}{lll}
K_{\bar X}+5\bar S_2+a_1\bar E&\sim &0,
\\[4pt]
K_{\bar X}+ 2\bar S_2+2\bar S_3+a_2\bar E&\sim&0,
\end{array}
\end{equation}
where $\dim |S_2|=0$ and $\dim |S_3|=1$.
Using \eqref{eq-3-fol-discr} we obtain 
\begin{equation}
\label{eq-rel-Clb-1-10}
\begin{array}{lll}
5\beta_2&=&a_1+\alpha,
\\[4pt]
2\beta_2+2\beta_3&=&a_2+\alpha.
\end{array}
\end{equation}

Since $S_3\in \MMM$ is a general member, 
by \eqref{eq-3-ct} we have $c=\alpha/\beta_3\le 1/8$, so 
$8\alpha\le \beta_3$
and $a_2\ge 15\alpha+2\beta_2$.

\begin{case}
First we consider the case where $f(E)$ is either a curve or a 
Gorenstein point on $X$.
Then $\alpha$ and $\beta_k$ are non-negative integers.
In particular, $a_2\ge 15$.
From \eqref{eq-rel-Cl-10-1}
we obtain that $g$ is birational. Indeed, otherwise 
restricting the second relation of \eqref{eq-rel-Cl-10-1} 
to a general fiber $V$ we get that $-K_V$ is divisible by some number $a'\ge a_2\ge 15$.
This is impossible.
Thus $\hat X$ is a $\QQ$-Fano. Again from \eqref{eq-rel-Cl-10-1} we get
$\qQ(\hat X)\ge 15$. 
Moreover, $\hat E\qq \Theta$.
In particular, $|\Theta|\neq\emptyset$.
This contradicts Proposition \ref{prop-comput}.
\end{case}

\begin{case}
\label{explanations-beta-10}
Therefore $f(E)$ is a 
non-Gorenstein point $P_r$ of index $r=7$ or $11$.
By Theorem \ref{theorem-Kawamata-blowup} $\alpha=1/r$.
Near $P_r$ we can write $A\sim -l_rK_X$, where $l_r\in \ZZ$ and $10l_r\equiv 1\mod r$.
Then $S_k+kl_rK_X$ is Cartier near $P_r$. Therefore,
$\beta_k\equiv kl_r\mod \ZZ$ and we can write $\beta_k=kl_r/r+m_k$, where $m_k=m_{k,r}\in \ZZ_{\ge 0}$.
Explicitly, we have the following values of $\alpha$, $\beta_k$, and $a_k$: 
\[
\begin{array}{l|l|ll|ll}
r&\alpha&\beta_2&\beta_3&a_1&a_2
\\[4pt]
\hline
&&&&&
\\[-2pt]
7&\frac17&\frac37+m_2&\frac17+m_3&
2+5m_2&1+2m_2+2m_3
\\[10pt]
11&\frac1{11}&\frac9{11}+m_2&\frac8{11}+m_3&
4+5m_2&3+2m_2+2m_3
\end{array} 
\]

\begin{claim}
\label{claim-q10}
If $r=7$, then $m_3\ge 1$.
\end{claim}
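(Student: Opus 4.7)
The claim is that for the Kawamata blowup $f$ centered at $P_7$, the integer $m_3$ in the decomposition $\beta_3 = 1/7 + m_3$ must be positive. My plan is that this follows almost immediately from combining the discrepancy computation at $P_7$ with the canonical threshold bound extracted from the other non-Gorenstein point $P_{11}$, so the proof should be only a few lines.

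First I would record what the Kawamata blowup theorem (Theorem~\ref{theorem-Kawamata-blowup}) gives us at the center $P_7$: since the exceptional divisor $E$ must be the Kawamata divisor over the terminal quotient $\tfrac{1}{7}(1,a,7-a)$, its discrepancy is $\alpha=a(E,X)=1/7$. Combined with \eqref{eq-Sarkisov-discr} and \eqref{eq-3-ct} (and the fact that $\MMM=|3A|$ has no fixed part, so $\beta_0=\beta_3$), this identifies the canonical threshold as
\[
c=\frac{\alpha}{\beta_3}=\frac{1}{7\beta_3}.
\]

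Next, I would transplant the calculation from the introduction of the section to produce an upper bound on $c$ that does \emph{not} use $P_7$. Near $P_{11}$ we have $A\sim-10K_X$, so $\MMM=|3A|\sim -30K_X\sim -8K_X$ locally. Applying Lemma~\ref{lemma-ct} at $P_{11}$ (with $m=8$, which is legitimate because $0<8<11$) yields $c\le 1/8$. Combining this with the previous display gives $1/(7\beta_3)\le 1/8$, i.e.\ $\beta_3\ge 8/7$. Reading off from the table in \ref{explanations-beta-10}, $\beta_3=1/7+m_3$, so $m_3\ge 1$, as required.

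The only thing to double-check is that the bound from Lemma~\ref{lemma-ct} really applies here: we need $(X,c\MMM)$ to have the structure assumed in \S\ref{sect-bir-constr} (namely $\MMM$ is a mobile linear system with $-(K_X+c\MMM)$ ample and canonical but not terminal), which is set up at the start of the section, and we need the local class of $\MMM$ at $P_{11}$ to lie strictly between $0$ and $r=11$, which the congruence $3\cdot 10\equiv 8\pmod{11}$ supplies. I do not foresee any genuine obstacle here — the argument is just a bookkeeping collision between the discrepancy $1/7$ at the blown-up point and the canonical threshold bound $1/8$ forced by the other singular point.
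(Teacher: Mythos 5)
Your argument is exactly the paper's: the author also deduces the claim from $c=\alpha/\beta_3\le 1/8$, where $\alpha=1/7$ comes from Theorem~\ref{theorem-Kawamata-blowup}, the identification $\beta_0=\beta_3$ from $S_3$ being a general member of $\MMM$, and the bound $c\le 1/8$ from Lemma~\ref{lemma-ct} applied at $P_{11}$ (all set up earlier in the section). Your version just spells out the bookkeeping that the paper compresses into one line; it is correct.
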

\begin{proof}
Follows from $c=\alpha/\beta_3\le 1/8$.
\end{proof}
If $g$ is not birational, then $a_i\le 3$, so $r=7$.
By the above claim we have $a_2\ge 3$. 
In this case, $g$ is a generically $\PP^2$-bundle  and $m_2=0$
(because $-K_{\bar X}$ restricted to a general fiber is divisible by
$a_2\ge 3$).
On the other hand, $a_1=2$ and $\bar S_2$ is $g$-vertical, a contradiction.
Thus $g$ is birational. Since $\bar S_3$ is moveable, $s_3\ge 1$.
Put 
\[
u:=s_2+em_2,\qquad v:=s_3+em_3. 
 \]
\end{case}

\begin{case}
\textbf{Case: $r=11$.}
Then 
\begin{equation}
\label{eq-rel-Cls-10z-2}
\begin{array}{lllll}
\hat q&=&5s_2+(4+5m_2)e&=&5u+4e,
\\[4pt]
\hat q&=& 2s_2+2s_3+(3+2m_2+2m_3)e&=& 2u+2v+3e.
\end{array}
\end{equation}
Assume that 
$u=0$. Then $\hat q=4e$. The only solution of \eqref{eq-rel-Cls-10z-2}
is the following: $\hat q=8$, $v=1$, $e=2$. 
Hence, $s_2=0$ and $s_3=1$. 
In particular, $\dim |\Theta |\ge \dim |S_3|=1$.
On the other hand, by Lemma \ref{lemma-c-Cl} the group $\Cl(\hat X)$ is torsion free
and by Lemma \ref{lemma-7} the divisor $\Theta$ is not moveable,
a contradiction.

Therefore, 
$u\ge 1$. By the first relation in \eqref{eq-rel-Cls-10z-2}
$\hat q\ge 9$. Hence the group $\Cl(\hat X)$ is torsion free.
Then by Lemma \ref{lemma-c-Cl} we have $F\sim eA$.
Since $|A|=\emptyset$, $e\ge 2$.
Again by \eqref{eq-rel-Cls-10z-2} $\hat q\ge 13$ and $e$ is odd.
Thus, $e=3$, $u=1$, and $\hat q=17$. Further, 
$s_3+em_3=v=3$ and $s_3=3$ (because $\bar S_3$ is moveable).
By Proposition \ref{prop-comput} we have
$1=\dim |S_3|\le \dim |3\Theta|=0$, a contradiction.
\end{case}

\begin{case}
\textbf{Case: $r=7$.}
Recall that $m_3\ge 1$ by Claim \ref{claim-q10}. Write
\begin{equation}
\label{eq-rel-Cl-10z-2}
\begin{array}{lllll}
\hat q&=&5s_2+(2+5m_2)e&=&5u+2e,
\\[4pt]
\hat q&=& 2s_2+2s_3+(1+2m_2+2m_3)e&=& 2u+2v+e.
\end{array}
\end{equation}
Hence, $v=s_3+em_3\ge 1+e$.

If $u=0$, then $\hat q=2e=2v+e$, $e=2v$, and $\hat q=4v\ge 4(1+e)=4(1+2v)$,
a contradiction.
If $u=2$, then $\hat q$ is even $\ge 12$. Again we have a contradiction.

Assume that $u\ge 3$. Using the first relation in \eqref{eq-rel-Cl-10z-2}
and Proposition \ref{prop-comput} we get successively $u=3$, $\hat q\ge 17$, $|\Theta|=\emptyset$, $e\ge 2$,
$\hat q\ge 19$, $|2\Theta|=\emptyset$, $e\ge 3$, and so $\hat q\ge 21$, a contradiction.

Therefore, $u=1$. Then $\hat q=5+2e=2+2v+e$ and $2v=3+e=2v\ge 2+2e$.
So, 
$e=1$, $v=2$, $\hat q=7$. 
Since $m_3\ge 1$, $s_3=v-em_3=1$. Hence, $\hat S_3\qq \Theta$. Since $\dim |\hat S_3|\ge 1$,
by \ref{theorem-main-q7} of Theorem \ref{theorem-main} we have $\hat X\simeq \PP(1^2,2,3)$.
In particular, the group $\Cl (\hat X)$ is torsion free.
By Lemma \ref{lemma-c-Cl} the divisor $F$ generates the group 
$\Cl(X)$. This contradicts $|A|=\emptyset$.

The last contradiction finishes the  proof of \ref{theorem-main-10} of Theorem \ref{theorem-main}.
\end{case}

\section{Case $\qQ(X)=11$ and $\dim |-K_X|\ge 11$}
In this section we consider the case $\qQ(X)=11$ and $\dim |-K_X|\ge 11$. 
By Proposition \ref{prop-comput} the group $\Cl(X)$ is torsion free 
and $\B=(2,3,5)$.
For $r=2$, $3$, $5$, let $P_r$ be a (unique) point of index $r$.
In notation of \S \ref{sect-bir-constr}, take
$\MMM:=|2A|$. Since $0=\dim |A|>\dim \MMM=1$, the linear system $\MMM$ has no 
fixed components. Apply Construction \eqref{eq3.1}.
Near $P_5$ we have $A\sim -K_X$ and $\MMM\sim -2K_X$.
By Lemma \ref{lemma-ct} we get $c\le 1/2$. 
In particular, the pair $(X,\MMM)$ is not canonical.
It can be easily seen from Proposition \ref{prop-comput}
that there are reduced irreducible members $S_k\in |kA|$
for $k=1$, $2$, $3$, $5$.

\begin{proposition}
In the above notation, $f$ is the Kawamata blwup of $P_5$ 
and $\hat X$ is a del Pezzo surface with 
Du Val singularities with $K_{\hat X}^2=5$ or $6$. 
Moreover, for $k=1$, $2$ and $3$, the image 
$C_k:=g(\bar S_k)$ is a curve on $\hat X$ 
with $-K_{\hat X}\cdot C_k=k$.
\end{proposition}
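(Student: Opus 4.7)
The plan is to apply the Sarkisov construction of Section~\ref{sect-bir-constr} with $\MMM = |2A|$, for which $c := \ct(X, \MMM) \le 1/2$ by Lemma~\ref{lemma-ct} near $P_5$. Starting from the three linear relations
\begin{equation*}
K_X + S_1 + 5 S_2 \sim 0, \quad K_X + S_2 + 3 S_3 \sim 0, \quad K_X + S_1 + 2 S_5 \sim 0
\end{equation*}
coming from $-K_X \sim 11A$, pulling each back by $f$ via \eqref{eq-3-fol-discr} produces relations on $\bar X$ of the form $K_{\bar X} + \sum k_i \bar S_i + a_j \bar E \sim 0$ with $a_j = -\alpha + \sum k_i \beta_i \in \ZZ_{\ge 0}$. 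These will drive the argument.

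I first identify $f$. Case-splitting on the center $f(E)$ determines $(\alpha, \beta_i)$: for a curve or Gorenstein center, $\alpha \ge 1$, all $\beta_i \in \ZZ_{\ge 0}$, and $\beta_2 \ge 2\alpha$ from $c \le 1/2$, giving $a_1 \ge 9$; for $f(E) = P_r$, $\alpha = 1/r$ and $\beta_k = k l_r/r + m_k$ as in~\ref{explanations-beta-10}. In each case other than $P_5$, analyzing $g$ (birational vs del Pezzo fibration vs $\QQ$-conic bundle) using Lemma~\ref{lemma-c-Cl}, Proposition~\ref{prop-comput}, and restriction to a general fiber yields a contradiction; the combinatorial bookkeeping across many subcases is the main technical obstacle, but each possibility is rigidly ruled out by the index table. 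Hence $f$ is the Kawamata blowup of $P_5$ with $\alpha = 1/5$, $\beta_k = k/5 + m_k$, and $a_1 = 2 + m_1 + 5 m_2$, $a_2 = 2 + m_2 + 3 m_3$, $a_3 = 2 + m_1 + 2 m_5$. The same numerology applied to $g$ rules out the birational case (no consistent $\hat q$ in Proposition~\ref{prop-comput}) and the del Pezzo fibration case (fiber index too large), so $g$ is a $\QQ$-conic bundle to a surface $\hat X$; by Lemma~\ref{lemma-diagram-surface}, $\hat X$ is a del Pezzo surface with Du Val $A$-singularities.

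Restricting the first relation to a general fiber $F \simeq \PP^1$ of $g$, with $K_{\bar X} \cdot F = -2$, $\bar E \cdot F =: n \ge 1$ by Claim~\ref{st3.1}, and $\bar S_k \cdot F \ge 0$, yields $\bar S_1 \cdot F + 5\bar S_2 \cdot F + a_1 n = 2$. Since $a_1 \ge 2$, this forces $a_1 = 2$, $n = 1$, and $\bar S_1 \cdot F = \bar S_2 \cdot F = m_1 = m_2 = 0$; the other two relations give $\bar S_3 \cdot F = \bar S_5 \cdot F = m_3 = m_5 = 0$. Thus $\bar S_k$ is $g$-vertical for $k = 1, 2, 3, 5$, $C_k := g(\bar S_k)$ is a curve, and $\bar E$ is a section of $g$ (i.e., $g|_{\bar E}$ is birational). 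Using that $g^* C_k = \bar S_k$ generically (as $\bar S_k$ is the unique divisorial component of $g^{-1}(C_k)$), together with the relation $-K_{\bar X} \sim \bar S_{i_1} + k' \bar S_{i_2} + 2\bar E$ and adjunction on $\bar E$, the projection formula yields $-K_{\hat X} \cdot C_k = g^*(-K_{\hat X}) \cdot \bar S_k \cdot \bar E = k$ for $k = 1, 2, 3$. Finally, $K_{\hat X}^2 \in \{9, 8\}$ (i.e., $\hat X \simeq \PP^2$ or $\PP(1,1,2)$) is excluded because on those surfaces every irreducible curve has anticanonical degree $\ge 3$ or $\ge 2$ respectively, contradicting $-K_{\hat X} \cdot C_1 = 1$.
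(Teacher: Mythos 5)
Your skeleton matches the paper's (same $\MMM=|2A|$, same type of relations $K_{\bar X}+\sum k_i\bar S_i+a_j\bar E\sim 0$, restriction to a general fiber), but the proposal leaves genuine gaps at the points where the proposition is actually proved. The central one: the elimination of every center $f(E)$ other than $P_5$ and of every structure of $g$ other than a $\QQ$-conic bundle is exactly what you label ``the main technical obstacle'' and then skip. Moreover the order in which you present it cannot work: $f(E)=P_2$ and $P_3$ cannot be excluded \emph{before} analyzing $g$. In the birational case the contradiction for those centers comes from the bound $\hat q\le 11$, which is obtained from $\dim|-K_{\hat X}|\ge\dim|-K_X|=23$ (Remark \xref{remark-eq3.3} together with Proposition \xref{prop-comput}), i.e.\ from the hypothesis $\dim|-K_X|\ge 11$ that your write-up never invokes; in the fiber-type case $P_2,P_3$ are excluded only by the bound $a_i\le 3$ obtained by restricting to the fiber (since $a_1=5+11m_1$ resp.\ $7+11m_1$ there). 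The paper therefore settles ``which point is blown up'' and ``what $g$ is'' simultaneously; your proposal gives no substitute for that analysis. Your stated reason for excluding a del Pezzo fibration (``fiber index too large'') also fails for $r=5$, where $a_1=a_2=a_3=2\le 3$; the paper excludes the curve base using that $\bar S_3$ is irreducible, $g$-vertical and moves in a two-dimensional linear system.

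There is also a concrete arithmetic error. For $r=5$ one has $\beta_5=m_5$ (since $S_5\sim 5A\sim 0$ near $P_5$) and $\beta_1=\frac15+m_1$, so the relation $K_X+S_1+2S_5\sim 0$ gives $a_3=\beta_1+2\beta_5-\alpha=m_1+2m_5$, not $2+m_1+2m_5$. Hence the fiber restriction yields $\bar S_5\cdot F+m_5=1$ and does \emph{not} force $\bar S_5$ to be $g$-vertical --- indeed in the paper's continuation $\bar S_5$ is a section of $g$. Finally, the identity $-K_{\hat X}\cdot C_k=g^*(-K_{\hat X})\cdot\bar S_k\cdot\bar E$ is asserted rather than derived: $g^*K_{\hat X}$ is not expressible from your relations without the relative canonical class, and the multiplicity of $\bar S_k$ in $g^{*}C_k$ is not controlled. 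The paper's route is more robust: with all $m_k=0$ one gets $\bar S_k\sim k\bar S_1$, then $\dim|C_1|=\dim|\bar S_1|=0$ forces $K_{\hat X}^2\le 6$ and $C_1$ to be a line via the classification in Lemma \xref{lemma-diagram-surface}, which yields $K_{\hat X}^2\in\{5,6\}$ and $-K_{\hat X}\cdot C_k=k$ without any projection-formula computation on $\bar X$.
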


\begin{proof}
Similar to \eqref{eq-rel-Cl-10-1}-\eqref{eq-rel-Clb-1-10} we have
for some $a_1,a_2,a_3\in \ZZ$:
\begin{equation}
\label{eq-rel-Cl-11}
\begin{array}{lll}
K_{\bar X}+11\bar S_1+a_1\bar E&\sim&0,
\\[4pt]
K_{\bar X}+ \bar S_1+5\bar S_2+a_2\bar E&\sim&0,
\\[4pt]
K_{\bar X}+ 2\bar S_1+3\bar S_3+a_3\bar E&\sim&0,
\end{array}
\end{equation}
\begin{equation}
\begin{array}{lll}
11\beta_1&=&a_1+\alpha,
\\[4pt]
\beta_1+5\beta_2&=&a_2+\alpha,
\\[4pt]
2\beta_1+3\beta_3&=&a_3+\alpha.
\end{array}
\end{equation}
Since $S_2\in \MMM$
is a general member, 
by \eqref{eq-3-ct} we have
$c=\alpha/\beta_2\le 1/2$, so $2\alpha\le \beta_2$
and $a_2\ge 9\alpha+\beta_1$.
Since $2S_1\sim S_2$, we have
$2\beta_1\ge \beta_2$. Thus $\beta_1\ge \alpha$ and $a_1,\, a_2\ge 10\alpha$.

First we consider the case where $f(E)$ is either a curve or a 
Gorenstein point on $X$.
Then $\alpha$ and $\beta_k$ are integers, so $a_1,\, a_2\ge 10$.
From 
\eqref{eq-rel-Cl-11}
we obtain that $g$ is birational. 
Moreover, $\hat q\ge 15$, the group 
$\Cl(\hat X)$ is torsion free, and $\hat E\sim \Theta$.
In particular, $|\Theta|\neq\emptyset$.
This contradicts Proposition \ref{prop-comput}.

\begin{case}
Therefore $P:=f(E)$ is a 
non-Gorenstein point of index $r=2,3$ or $5$. As in \ref{explanations-beta-10}
we have the following values of $\beta_k$ and $a_k$: 
{\small \[
\begin{array}{l|lll|lll}
r&\beta_1&\beta_2&\beta_3&a_1&a_2&a_3
\\[4pt]
\hline
&&&&
\\[-2pt]
2&\frac12+m_1&m_2&\frac12+m_3
&5+11m_1&
m_1+5m_2&
2+2m_1+3m_3
\\[8pt]
3&\frac23+m_1&\frac13+m_2&m_3
&7+11m_1&
1+m_1+5m_2&
1+2m_1+3m_3
\\[8pt]
5&\frac15+m_1&\frac25+m_2&\frac35+m_3
&2+11m_1&
2+m_1+5m_2&
2+2m_1+3m_3
\end{array} 
\]}
\end{case}

\begin{claim}
\label{claim-q11}
If $r=2$ or $3$, then $m_2\ge 1$.
\end{claim}
\begin{proof}
Follows from $1/2\ge c=\alpha/\beta_2=1/r\beta_2$.
\end{proof}

Assume that $g$ is birational.
By Proposition \ref{prop-comput} and Remark \ref{remark-eq3.3}
we have $\dim |-K_{\hat X}|\ge |-K_X|=23$. So, $\hat q\le 11$.
If $\bar S_1$ is not contracted, then by
the first relation in \eqref{eq-rel-Cl-11} we have
$\hat q\ge 11+a_1\ge 13$, a contradiction.
Therefore the divisor $\bar S_1$ is contracted. By Lemma \ref{lemma-c-Cl} 
the group $\Cl(\hat X)$ is torsion free and $\hat E\sim \Theta$.
Hence, $\hat q=a_1\le 7$, $m_1=0$, and $r\neq 5$.
But then $m_2\ge 1$ (see Claim \ref{claim-q11}) and $a_2\ge 5$.
This contradicts the second relation in \eqref{eq-rel-Cl-11}.

Therefore $g$ is of fiber type. 
Restricting \eqref{eq-rel-Cl-11} to a general fiber
we get $a_i\le 3$.
Thus, $r=5$ and $a_1=a_2=a_3=2$. 
Moreover, divisors $\bar S_1$, $\bar S_2$, and $\bar S_3$
are $g$-vertical.
Since $\bar S_3$ is irreducible and $\dim |\bar S_3|=2$,
$\hat X$ cannot be a curve.
Therefore $\hat X$ is a surface and
the images $g(\bar S_1)$, $g(\bar S_2)$, and $g(\bar S_3)$ are curves. 
Since $\dim |\bar S_1|=0$, we have $\dim |g(\bar S_1)|=0$.
Hence, $K_{\hat X}^2\le 6$ and $g(\bar S_1)$ is a line on 
$\hat X$. 
By Lemma \ref{lemma-diagram-surface} there are only two possibilities:
$\hat X\simeq \PP(1,2,3)$ and $\hat X$ is an $A_4$-del Pezzo surface. 
\end{proof}

\begin{case}
Consider the case where $\hat X$ is an $A_4$-del Pezzo surface. 
Assume that $\bar S_6$ is $g$-vertical.
By Riemann-Roch for Weil divisors on surfaces with Du Val singularities
\cite{Reid-YPG1987} we have 
$\dim |\bar S_6|=\dim |g(\bar S_6)|=6$.
On the other hand, $\dim |\bar S_6|=\dim |S_6|=7$, a contradiction.
Thus $g(\bar S_5)=\hat X$.
Since $K_X+S_5+S_6\sim 0$, 
\[
K_{\bar X}+\bar S_5+\bar S_6+\bar E\sim 0.
\]
Therefore $\bar S_6$ and $\bar E$ are sections of $g$.
By Proposition \ref{proposition-toric-surface}
the pair $(\bar X, \bar S_6+\bar E)$ is canonical.
Now since $\bar S_5$ is nef, the map $\bar X \dashrightarrow \tilde X$ 
is a composition of steps of the $K_{\bar X}+\bar S_6+\bar E$-MMP.
Hence the pair $(\tilde X, \tilde S_6+E)$ is also canonical.
In particular, $\tilde S_6\cap E=\emptyset$ and so
$P_5=f(E)\notin S_6$, a contradiction.
\end{case}

\begin{case}
Now consider the case $\hat X\simeq \PP(1,2,3)$. 
As above, if
$g(\bar S_5)$ is a curve, then
$\dim |g(\bar S_5)|=5$ and $g(\bar S_5)\sim 5g(\bar S_1)$.
On the other hand, $g(\bar S_5)\sim -\frac 56 K_{\hat X}$.
But then $\dim |g(\bar S_5)|=4$, a contradiction.
Therefore, $g(\bar S_5)=\hat X$.
Similar to \eqref{eq-rel-Cl-11} we have 
$K_{\bar X}+2\bar S_5+\bar S_1+a_4\bar E\sim0$.
This shows that $a_4=0$ and $\bar S_5$ is a section of $g$.
Thus we can write $K_{\bar X}+\bar S_5+G+\bar E\sim 0$,
where $G$ is a $g$-trivial Weil divisor, i.e., $G=g^*\Gamma$
for some Weil divisor $\Gamma$. Pushing down this 
equality to $X$ we get $G\sim 6\bar S_1$, i.e., $\Gamma\in |-K_{\hat X}|$. 
By 
Proposition \ref{proposition-toric-surface}
varieties $\bar X$ and $X$ are toric.
This proves \ref{theorem-main-11} of Theorem \ref{theorem-main}.
\end{case}

\section{Case $\qQ(X)=13$ and $\dim |-K_X|\ge 6$}
\label{sect-q13}
In this section we consider the case $\qQ(X)=13$ and $\dim |-K_X|\ge 6$. 
By Proposition \ref{prop-comput}\ $\B=(3,4,5)$.
For $r=3$, $4$, $5$, let $P_r$ be a (unique) point of index $r$.
In notation of \S \ref{sect-bir-constr}, take
$\MMM:=|4A|$. Since $1=\dim |3A|>\dim \MMM=2$, the linear 
system $\MMM$ has no fixed components. Apply Construction \eqref{eq3.1}.
Near $P_5$ we have $A\sim -2K_X$ and $\MMM\sim -3K_X$.
By Lemma \ref{lemma-ct} we get $c\le 1/3$. 
In particular, the pair $(X,\MMM)$ is not canonical.

\begin{proposition}
\label{proposition-q13}
In the above notation, $f$ is the Kawamata blowup of $P_5$, $g$ is birational,
it contracts $\bar S_1$,
and $\hat X\simeq \PP(1^3,2)$. Moreover, $\hat S_3\sim \hat S_4\sim \hat E\sim \Theta$ and
$\hat S_5\sim 2\Theta$. 
\end{proposition}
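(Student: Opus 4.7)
The plan is to follow the strategy used in the $\qQ(X)=10$ and $\qQ(X)=11$ cases: track the systems $|kA|$ through the Sarkisov link, eliminate every possible center for $f$ except $P_5$, rule out a fiber-type $g$, and finally identify $\hat X$ via the already-established part~\ref{theorem-main-q5} of Theorem~\ref{theorem-main}. From $-K_X \qq 13A$ and the torsion-freeness of $\Cl(X)$ (Proposition~\ref{prop-comput}), each relation $iS_j + kS_\ell \qq -K_X$ with $ij + k\ell = 13$ lifts to a genuine linear equivalence. Pushing it through the link via \eqref{eq-3-fol-discr} yields, for some integers $a_1, a_3, a_4, a_5$,
\begin{gather*}
K_{\bar X} + 13\bar S_1 + a_1 \bar E \sim 0, \\
K_{\bar X} + \bar S_1 + 3\bar S_4 + a_4 \bar E \sim 0, \\
K_{\bar X} + \bar S_4 + 3\bar S_3 + a_3 \bar E \sim 0, \\
K_{\bar X} + \bar S_3 + 2\bar S_5 + a_5 \bar E \sim 0,
\end{gather*}
together with the numerical shadows $13\beta_1 = a_1 + \alpha$, $\beta_1 + 3\beta_4 = a_4 + \alpha$, $\beta_4 + 3\beta_3 = a_3 + \alpha$, $\beta_3 + 2\beta_5 = a_5 + \alpha$. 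Since $S_4$ is a general member of $\MMM$, \eqref{eq-3-ct} gives $c = \alpha/\beta_4 \leq 1/3$, and since $4S_1 \in |4A|$ we also have $4\beta_1 \geq \beta_4$.

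Next I would exclude the case where $f(E)$ is a curve or a smooth point: then $\alpha \geq 1$ and $\beta_k, a_k$ are integers, whence $\beta_4 \geq 3$, $\beta_1 \geq 1$ and hence $a_1 \geq 12$, $a_4 \geq 9$; enumeration against Proposition~\ref{prop-comput} (with $\dim|-K_{\hat X}| \geq \dim|-K_X| \geq 6$ from Remark~\ref{remark-eq3.3}) rules out every birational $g$, while a fiber-type $g$ would force a Fano index exceeding $3$ on a general fiber. Hence $f$ is the Kawamata blowup of a point $P_r$ with $r \in \{3,4,5\}$ by Theorem~\ref{theorem-Kawamata-blowup}. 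For each $r$ I would tabulate $\alpha = 1/r$ and $\beta_k = (k t_r \bmod r)/r + m_k$ with $m_k \in \ZZ_{\geq 0}$ and $t_r$ defined by $13 t_r \equiv 1 \pmod{r}$, obtaining explicit linear expressions for $a_1, a_3, a_4, a_5$ in terms of the $m_k$; the inequality $c \leq 1/3$ moreover forces $m_4 \geq 1$ when $r = 3$ or $r = 4$.

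For $r = 3$ and $r = 4$, the resulting $a_1 \geq 3$ again rules out fiber-type $g$, and for birational $g$ the four pushed relations pin down $\hat q$ and the $\Theta$-classes of $\hat E$ and each $\hat S_k$; enumeration against Proposition~\ref{prop-comput}, part~\ref{theorem-main-10} of Theorem~\ref{theorem-main}, and the dimension bounds $\dim|k\Theta| \geq \dim|kA|$ should eliminate every subcase. In the surviving case $r = 5$, $a_1 = 5 + 13 m_1 \geq 5$ excludes fiber-type $g$; moreover, if $\bar S_1$ were not contracted, pushing the first relation to $\Cl(\hat X)/{\qq} \cong \ZZ$ would give $\hat q \geq 13 + a_1 \geq 18$, but a diophantine check shows $\hat q = 19$ admits no solution of $13 s_1 + a_1 e = 19$ with $s_1, e \geq 1$. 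Hence $\bar S_1$ is the $g$-exceptional divisor, $-K_{\hat X} \sim a_1 \hat E$ forces $\hat q = (5 + 13 m_1)e$, and Proposition~\ref{prop-comput} combined with part~\ref{theorem-main-10} of Theorem~\ref{theorem-main} leaves only $m_1 = 0$, $e = 1$, $\hat q = 5$, $\hat E \sim \Theta$. Since $\dim|\Theta| \geq \dim|\hat S_4| \geq \dim|S_4| = 2 > 1$, part~\ref{theorem-main-q5} of Theorem~\ref{theorem-main} (already proved in~\ref{subsection-proof-q=5and7}) identifies $\hat X \cong \PP(1^3,2)$. Pushing the remaining three relations and requiring that each $\hat S_k$ be effective with $\dim|\hat S_k| \geq \dim|S_k|$ forces $m_3 = m_4 = m_5 = 0$, giving $\hat S_3 \sim \hat S_4 \sim \hat E \sim \Theta$ and $\hat S_5 \sim 2\Theta$. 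The main obstacle, as in the $q = 11$ case, will be the clean exclusion of $r = 3$ and $r = 4$, which requires a careful case split on the admissible $\hat q$ and a delicate matching of linear-system dimensions against the numerical data in Proposition~\ref{prop-comput}.
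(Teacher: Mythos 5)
Your plan is essentially the paper's own proof: the same Sarkisov-link bookkeeping with relations $iS_j+kS_\ell\qq -K_X$, $ij+k\ell=13$ (your choice $S_4+3S_3$ in place of the paper's $S_1+4S_3$ is harmless), the same exclusion of a curve or Gorenstein center, the same reduction to a Kawamata blowup of $P_r$, $r\in\{3,4,5\}$, with $\alpha=1/r$ and the constraint $m_4\ge 1$ for $r=3,4$, and the same endgame: $\bar S_1$ contracted, $\hat q=5$, $\hat X\simeq\PP(1^3,2)$ via part \xref{theorem-main-q5}, then $s_5=2$ from $\dim|S_5|=3>\dim|\Theta|=2$. (Your citation of \xref{theorem-main-q5} with $\dim|\hat S_4|\ge\dim|S_4|=2>1$ is in fact the correct reference; the paper's text points to \xref{theorem-main-q7} there, which appears to be a slip.)

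Two points need attention. First, the assertion that for $r=3,4$ ``the resulting $a_1\ge 3$ again rules out fiber-type $g$'' is false as stated when $r=4$ and $m_1=0$: then $a_1=3$ exactly, which is compatible with $g$ being a generically $\PP^2$-bundle. The paper closes this case by observing that then $\bar S_1$, $\bar S_3$, $\bar S_4$ are all forced to be $g$-vertical, while $\bar S_4$ is irreducible and moves in a $2$-dimensional system, which is impossible for a vertical divisor over a curve; you need this (or an equivalent) extra step. Second, the exclusion of birational $g$ for $r=3,4$, which you defer to ``enumeration,'' is where the proof actually lives, though it is short once set up correctly: Lemma \xref{lemma-c-Cl} applied to $F=S_1\sim A$ gives $ne=d=1$, hence $e=1$ and $\Cl(\hat X)$ torsion free, so $\hat q=a_1\le 4$, while the relation $K_{\bar X}+\bar S_1+3\bar S_4+a_3\bar E\sim 0$ gives $\hat q\ge 3s_4+a_3\ge 3+3=6$ using $m_4\ge 1$ and $s_4\ge 1$. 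Note that this appeal to Lemma \xref{lemma-c-Cl} is also needed (and is implicit in your $r=5$ case) to upgrade $\hat S_k\qq s_k\Theta$ to genuine linear equivalences and to guarantee $\qW(\hat X)=\qQ(\hat X)=5$ before invoking \xref{theorem-main-q5}; you never state the torsion-freeness of $\Cl(\hat X)$ explicitly.
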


\begin{proof}
Similar to \eqref{eq-rel-Cl-10-1}-\eqref{eq-rel-Clb-1-10} we have
for some $a_1,a_2,a_3\in \ZZ$:
\begin{equation}
\label{eq-rel-Cl-13}
\begin{array}{lll}
K_{\bar X}+ 13\bar S_1+a_1\bar E&\sim& 0,
\\[4pt]
K_{\bar X}+\bar S_1+ 4\bar S_3+a_2\bar E&\sim& 0,
\\[4pt]
K_{\bar X}+ \bar S_1+3\bar S_4+a_3\bar E&\sim& 0,
\end{array}
\end{equation}
\begin{equation}
\label{eq-beta-13}
\begin{array}{lll}
13\beta_1&=&a_1+\alpha,
\\[4pt]
\beta_1+4\beta_3&=&a_2+\alpha,
\\[4pt]
\beta_1+3\beta_4&=&a_3+\alpha.
\end{array}
\end{equation}
Since $S_4\in \MMM$ is a general member, 
by \eqref{eq-3-ct} we have $c=\alpha/\beta_4\le 1/3$, $3\alpha\le \beta_4$
and $a_3\ge 8\alpha+\beta_1$.
Since $4S_1\sim S_4$, we have 
$4\beta_1\ge \beta_4$. Thus $\beta_1\ge \alpha$ and $a_1\ge 12\alpha$.

First we consider the case where $f(E)$ is either a curve or a 
Gorenstein point on $X$.
Then $\alpha$ and $\beta_k$ are integers.
In particular, $a_1\ge 12$.
From the first relation in
\eqref{eq-rel-Cl-13} 
we obtain that $g$ is birational. 
Moreover, $\hat q\ge 13$ and $\hat E\sim \Theta$.
In particular, $|\Theta|\neq\emptyset$.
By Proposition \ref{prop-comput} we have $\hat q=13$, $a_1=13$, $\bar S_1$ is contracted,
and $\alpha=1$. This contradicts \eqref{eq-beta-13}.

Therefore $P:=f(E)$ is a 
non-Gorenstein point of index $r=3,4$ or $5$.
By Theorem \ref{theorem-Kawamata-blowup} $\alpha=1/r$.
Similar to \ref{explanations-beta-10} we have (here $m_k\in  \ZZ_{\ge 0}$)
{
\footnotesize
\[
\begin{array}{l|llll|llllll}
r&\beta_1&\beta_3&\beta_4&\beta_5&a_1&a_2&a_3
\\[4pt]
\hline
&&&&&
\\[-2pt]
3&\frac13+m_1&m_3&\frac13+m_4&\frac23+m_5
&4+13m_1&m_1+4m_3&1+m_1+3m_4
\\[10pt]
4&\frac14+m_1&\frac34+m_3&m_4&\frac14+m_5
&3+13m_1&3+m_1+4m_3&m_1+3m_4
\\[10pt]
5&\frac25+m_1&\frac15+m_3&\frac35+m_4&m_5 
&5+13m_1&1+m_1+4m_3&2+m_1+3m_4
\end{array} 
\]}
\begin{claim}
\label{claim-q13}
If $r=3$ or $4$, then $m_4\ge 1$.
\end{claim}
\begin{proof}
Follows from $1/3\ge c=\alpha/\beta_4=1/r\beta_4$.
\end{proof}

If $g$ is not birational, then $a_1=3$, $r=4$, $m_4\ge 1$, and $a_3\ge 3$.
In this case, $a_2=a_3=3$, $g$ is a generically $\PP^2$-bundle, and 
divisors $\bar S_1$, $\bar S_3$, $\bar S_4$ are $g$-vertical.
Since $\dim |\bar S_4|>1$ and the divisor $\bar S_4$ is irreducible,
we have a contradiction.
Therefore $g$ is birational.

By Proposition \ref{prop-comput} we have $\dim |-K_{\hat X}|\ge |-K_X|=19$
and $\hat q\le 13$. From the first relation in \eqref{eq-rel-Cl-13}
we see that $\bar S_1$ is contracted. By Lemma \ref{lemma-c-Cl} the group
$\Cl(\hat X)$ is torsion free 
and $\hat E\sim \Theta$. Moreover, $m_1=0$ (because $13m_1<a_1e=\hat q\le 13$).
Thus $\hat q=a_1=4$, $3$, and $5$ in cases $r=3$, $4$, and $5$, respectively.

In cases $r=3$ and $4$ we have $\hat q\ge 3+a_3\ge 6$, a contradiction.
Therefore, $r=5$, $\hat q=5$, and $s_3=s_4=1$. Since $\dim |\Theta|\ge 1$,
by \ref{theorem-main-q7} of Theorem \ref{theorem-main} we have $\hat X\simeq \PP(1^3,2)$.
Since $\dim |S_5|=3$ and $\dim |\Theta|=2$, $s_5\ge 2$.
Similar to \eqref{eq-rel-Cl-13}-\eqref{eq-beta-13}
we have 
$K_{\bar X}+ \bar S_3+2\bar S_5+a_4\bar E\sim 0$, $2s_5+a_4=4$, and 
$a_4=\beta_3+2\beta_5-\alpha=m_3+2m_5$.
Thus, $s_5=2$ and $a_4=\beta_5=0$, i.e., $P_5\notin S_5$. 
\end{proof}

\begin{lemma}
\label{lemma-s-CAP-s-CAP-s-13}
\begin{enumerate}
\item
$S_1\cap S_3$ is a reduced irreducible curve.
\item
$S_1\cap S_3\cap S_4=\{P_5\}$.
\end{enumerate}
\end{lemma}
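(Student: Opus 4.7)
The plan has two parts. For (ii) I compute the triple intersection number on the Kawamata blow-up $\tilde X$ of $P_5$. Writing $\tilde S_k = f^{\ast}S_k - \beta_k E$ with $(\beta_1,\beta_3,\beta_4)=(\tfrac{2}{5},\tfrac{1}{5},\tfrac{3}{5})$ from the proof of Proposition~\ref{proposition-q13}, and using $A^3=\tfrac{1}{60}$ (from \eqref{eq-comput-L3}), $A^2\cdot E = A\cdot E^2 = 0$ (projection formula, as $f_{\ast}E^i$ is supported at $P_5$ of dimension $<3-i$), and $E^3 = \tfrac{25}{6}$ (adjunction on $E\simeq\PP(1,2,3)$ with $K_E=-6h$ and discrepancy $\alpha=\tfrac{1}{5}$), expansion yields
\[
\tilde S_1\cdot\tilde S_3\cdot\tilde S_4 = 12\,A^3 - \beta_1\beta_3\beta_4\,E^3 = \tfrac{1}{5} - \tfrac{1}{5} = 0.
\]
For generic $S_3\in|3A|$ and $S_4\in|4A|$ (both mobile by Proposition~\ref{proposition-q13}), the intersection scheme is zero-dimensional and effective, so vanishing of its degree forces $\tilde S_1\cap\tilde S_3\cap\tilde S_4 = \varnothing$. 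Hence $S_1\cap S_3\cap S_4\subseteq f(E)=\{P_5\}$, and since $\beta_1,\beta_3,\beta_4>0$ all three divisors pass through $P_5$, proving (ii).

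For (i) I combine a local calculation on $E$ with the Sarkisov link. Because $A|_E\equiv 0$ numerically and $E|_E = -5h$ on $E\simeq\PP(1,2,3)$, the classes $\tilde S_1|_E\equiv 2h$ and $\tilde S_3|_E\equiv h$ are represented by the weight-$2$ and weight-$1$ coordinate hyperplanes; these meet at the single $\tfrac{1}{3}$-vertex $[0:0:1]\in E$ with local multiplicity $\tfrac{1}{3}$. Hence $\tilde S_1\cap\tilde S_3$ meets $E$ in exactly one reduced point. Transporting to $\bar X$ via $\chi$ (an isomorphism in codimension one, and preserving generic $\tilde S_3$), the intersection $\bar S_1\cap\bar S_3$ is a $g$-vertical $1$-cycle in $\bar S_1$ whose image in $\hat X=\PP(1,1,1,2)$ lies in $g(\bar S_1)\cap\hat S_3$. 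Since $\hat S_3\in|\Theta|$ is a general hyperplane and the base locus of $|\Theta|$ is the single point $P_2$, this intersection reduces to $\{P_2\}$, forcing $\bar S_1\cap\bar S_3$ to be the single irreducible fibre of $g|_{\bar S_1}$ over $P_2$. Pulling back via $f\circ\chi^{-1}$ gives that $S_1\cap S_3$ is irreducible, and the transverse reduced intersection with $E$ confirms reducedness.

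The main obstacle is justifying that $g(\bar S_1)\cap\hat S_3 = \{P_2\}$, which requires identifying $g(\bar S_1)\subset\hat X$ and checking it meets a general $|\Theta|$-member only at the base point $P_2$. This will follow from the explicit toric description of the Sarkisov links in Section~\ref{sect-bir-constr}: once one knows $g(\bar S_1)$ is either $P_2$ itself or a toric curve through $P_2$, the geometry of $\PP(1,1,1,2)$ and of the linear system $|\Theta|$ forces the intersection with a general hyperplane to collapse to the single base point.
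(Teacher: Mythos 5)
Your computation $\tilde S_1\cdot\tilde S_3\cdot\tilde S_4=12A^3-\beta_1\beta_3\beta_4E^3=\tfrac15-\tfrac15=0$ is correct, but the step from ``degree zero'' to ``empty intersection'' is exactly where the difficulty of part (ii) lives, and you have not closed it. The deduction needs the scheme $\tilde S_1\cap\tilde S_3\cap\tilde S_4$ to be zero-dimensional; if the curve $C=S_1\cap S_3$ (which necessarily passes through $P_5$, since $A$ and $3A$ are non-Cartier there) lies in a general member of $|4A|$, the triple intersection is one-dimensional and the vanishing of the intersection number carries no information, because local contributions are then no longer positive. This containment is not absurd a priori: $|4A|$ contains the one-parameter family $S_1+|3A|$, every member of which contains $C$, so one must genuinely prove $C\not\subset\Bs|4A|$. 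That is precisely what the paper's proof of (ii) does: assuming a second point $P\neq P_5$ of $S_1\cap S_3\cap S_4$, it deduces $C\subset S_4$ from $S_4\cdot C=S_1\cdot S_3\cdot S_4=1/5$ (the index-$5$ point $P_5$ alone already contributes $1/5$), writes $S_1\cap S_4=C+\Gamma$ with $\Supp\Gamma\subset S_5$, hence $P_5\notin\Gamma$, and reaches the contradiction $12A\cdot\Gamma=12A\cdot S_1\cdot(S_4-S_3)=12A^3=1/5\notin\ZZ$, using that $12A$ is Cartier at $P_3$ and $P_4$. Some argument of this kind is indispensable; your proposal replaces it with an assertion.

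Part (i) has the same problem in a different guise, plus gaps you yourself flag. Knowing $\tilde S_1|_E\equiv 2h$ does not identify it with the weight-$2$ coordinate hyperplane: $|2h|$ on $\PP(1,2,3)$ is a pencil, one of whose members is $2\{x=0\}$ and shares a component with $\tilde S_3|_E$; and in any case the behaviour of $\tilde S_1\cap\tilde S_3$ along $E$ says nothing about irreducibility or reducedness of $S_1\cap S_3$ away from $P_5$. The transport through the flips and the claim $g(\bar S_1)\cap\hat S_3=\{P_2\}$ are unproven (and conflate the image of the contracted divisor $\bar S_1$ with the image of the curve $\bar S_1\cap\bar S_3$). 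The paper's proof of (i) is a short computation on $X$ itself which you could simply adopt: writing $S_1\cap S_3=C+\Gamma$ with $C$ the component through $P_5$ and $\Gamma\neq0$, one gets $S_5\cdot C<S_1\cdot S_3\cdot S_5=1/4$, whereas $P_5\notin S_5$ forces $C\not\subset S_5$ and $S_5\cdot C\ge1/4$. Note that the fact $P_5\notin S_5$, extracted from the end of the proof of Proposition \ref{proposition-q13}, is the lever for both parts of the paper's argument, and your proposal never uses it.
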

\begin{proof}
(i) Recall that  $A^3=1/60$ by Proposition \ref{prop-comput}.
Write $S_1\cap S_3=C+\Gamma$, where $C$ is a reduced irreducible curve 
passing through $P_5$ and $\Gamma$ is an effective $1$-cycle.
Suppose, $\Gamma\neq 0$. 
Then $1/4=S_1\cdot S_3\cdot S_5>S_5\cdot C$.
Since $P_5\notin S_5$, $C\not \subset S_5$ and $S_5\cdot C\ge 1/4$,
a contradiction. 
Hence, $S_1\cap S_3=C$. 

(ii) Assume that $S_1\cap S_3\cap S_4 \ni P\neq P_5$.
Since $1/5=S_1\cdot S_3\cdot S_4=S_4\cdot C$ and $P,\, P_5\in S_4\cap C$, we have 
$C\subset S_4$. 
If there is a component $C'\neq C$ of $S_1\cap S_4$ not contained in $S_5$, then, as above, 
$1/3=S_1\cdot S_4\cdot S_5\ge S_5\cdot C+S_5\cdot C'\ge 1/2$, a contradiction.
Thus we can write $S_1\cap S_4=C+\Gamma$, where 
$\Gamma$ is an effective $1$-cycle with $\Supp \Gamma \subset S_5$.
In particular, $P_5\notin \Gamma$.
The divisor $12A$ is Cartier at $P_3$ and $P_4$.
We get
\[
\frac15 =12A^3=12A\cdot S_1\cdot (S_4-S_3) =12A\cdot \Gamma  \in \ZZ,
\]
a contradiction.
\end{proof}

\begin{lemma}
\label{lemma-LC-1}
Let $X$ be a $\QQ$-Fano threefold and $D=D_1+\cdots +D_4$ be a 
divisor on $X$, where $D_i$ are irreducible components.
Let $P\in X$ be a cyclic quotient singularity of index $r$.
Assume that $K_X+D\qq 0$, $P\notin D_4$,
$D_1\cap D_2\cap D_3=\{P\}$, and $D_1\cdot D_2\cdot D_3=1/r$. 
Then the pair $(X,D)$ is LC.
\end{lemma}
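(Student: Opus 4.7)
The plan is to reduce the LC verification to a local computation at the cyclic quotient point $P$ via the index-one cover, and then bootstrap from LC at $P$ to LC everywhere by a global connectedness argument using ampleness of $-K_X$. First I would pass to the index-one cover $\pi\colon (\tilde X,\tilde P)\to (X,P)$, a $\muu_r$-Galois cover unramified in codimension one, which realizes the germ $(\tilde X,\tilde P)$ as a smooth analytic germ $(\CC^3,0)$. Since LC descends along quasi-\'etale finite covers, $(X,D)$ is LC at $P$ if and only if $(\tilde X,\sum\tilde D_i)$ is LC at $\tilde P$, where $\tilde D_i:=\pi^*D_i$; the projection formula yields
\begin{equation*}
\tilde D_1\cdot\tilde D_2\cdot\tilde D_3\ =\ r\cdot (D_1\cdot D_2\cdot D_3)\ =\ 1.
\end{equation*}

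The core of the argument is then the standard chain
\begin{equation*}
1\ =\ \tilde D_1\cdot\tilde D_2\cdot\tilde D_3\ \ge\ \mult_{\tilde P}(\tilde D_1)\cdot \mult_{\tilde P}(\tilde D_2)\cdot \mult_{\tilde P}(\tilde D_3)\ \ge\ 1,
\end{equation*}
valid since each $\tilde D_i$ passes through $\tilde P$ (so has multiplicity at least one there) and the intersection number of three effective divisors at a smooth point of a smooth threefold bounds the product of their multiplicities from below. Equality throughout forces each $\tilde D_i$ to be smooth at $\tilde P$ of multiplicity one, with tangent planes in general position, so $\tilde D_1+\tilde D_2+\tilde D_3$ is SNC at $\tilde P$. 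Combined with $\tilde P\notin\tilde D_4$ (from $P\notin D_4$), the full pair $(\tilde X,\sum\tilde D_i)$ is SNC, hence LC, near $\tilde P$; descending through $\pi$ shows that $(X,D)$ is LC at $P$.

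For a point $Q\neq P$ the pair is automatically klt off $\Supp(D)$. If LC failed at some $Q\in\Supp(D)\setminus\{P\}$, set $c:=\ct(X,D)<1$: since $D\qq -K_X$ is ample, the divisor $-(K_X+cD)\qq (1-c)D$ is ample, and Koll\'ar--Shokurov connectedness applies to the non-klt locus of $(X,cD)$. As $(X,cD)$ is klt at $P$ by the previous step, that locus avoids $P$; since $D_1\cap D_2\cap D_3=\{P\}$ implies that at most two of $D_1,D_2,D_3$ meet at $Q$, a routine local analysis using $P\notin D_4$ together with the positivity of the $D_i$ rules such a $Q$ out. I expect the main obstacle to be the index-one cover step: extracting both the smoothness of each $\tilde D_i$ and the transversality of their tangent planes from the single numerical identity $D_1\cdot D_2\cdot D_3=1/r$ is the whole point of the lemma, and everything hinges on that chain of inequalities being saturated. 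Once orbifold SNC at $P$ is secured, the extension to LC everywhere is essentially formal.
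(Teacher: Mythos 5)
Your local analysis at $P$ is exactly the paper's: pass to the index-one cover, observe that the local intersection number becomes $1$ at the preimage of $P$, and conclude that $D_1+D_2+D_3$ pulls back to an SNC divisor there, whence $(X,D)$ is LC near $P$. That half is correct.

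The gap is in the globalization. You propose to take $c:=\ct(X,D)<1$, note that $-(K_X+cD)\qq (1-c)D$ is ample, and apply Koll\'ar--Shokurov connectedness to the non-klt locus of $(X,cD)$. But connectedness of that locus is not a contradiction: since $(X,cD)$ is klt at $P$ for $c<1$, the non-klt locus simply avoids $P$ and could perfectly well be a single connected set sitting somewhere in $\Supp(D)\setminus\{P\}$. Nothing forces it to have two components, so the theorem gives you no information. The subsequent ``routine local analysis'' at a hypothetical bad point $Q$ cannot close this hole either: the hypotheses give no control whatsoever on the geometry of the $D_i$ away from $P$ --- two of them could be tangent along a curve through $Q$, or $D_4$ (about which only $P\notin D_4$ is assumed) could be arbitrarily singular --- so there is no local argument ruling out a non-LC point $Q\neq P$. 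The paper's proof supplies the missing idea: one perturbs the boundary to $D+\epsilon H-(\lambda\epsilon+\delta)D_4$, where $H$ is a general hyperplane section through $P$ and $\lambda D_4\qq H$, with $0<\delta\ll\epsilon\ll 1$. The class of $-(K_X+D+\epsilon H-(\lambda\epsilon+\delta)D_4)$ is $\qq\delta D_4$, so the connectedness hypothesis still holds; adding $\epsilon H$ makes $P$ an \emph{isolated} point of the non-klt locus (because $P$ is already an LC center of $(X,D)$, being the zero-dimensional set $D_1\cap D_2\cap D_3$), while subtracting the multiple of $D_4$ changes nothing near $P$ since $P\notin D_4$. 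If $(X,D)$ were not LC somewhere else, the non-klt locus of the perturbed pair would be disconnected --- that is the contradiction. This perturbation is the actual role of the hypothesis $P\notin D_4$ in the global step, and it is absent from your argument.
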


\begin{proof}
Let $\pi\colon (X^\sharp, P^\sharp)\to (X,P)$ be 
the index-one cover. For $k=1,2,3$, 
let $D_k^\sharp$ be the preimage of $D_k$ and let $D^\sharp:=D_1^\sharp+D_2^\sharp+D_3^\sharp$.
By our assumptions $D_1^\sharp\cap D_2^\sharp\cap D_3^\sharp=\{P^\sharp\}$.
Since $D_1\cdot D_2\cdot D_3=1/r$,
locally near $P^\sharp$ we have $D_1^\sharp\cdot D_2^\sharp\cdot D_3^\sharp=1$.
Hence $D^\sharp$
is a simple normal crossing divisor  \textup(near $P^\sharp$\textup).
In particular, $(X^\sharp, D^\sharp)$ is LC near $P^\sharp$
and so is $(X, D)$ near $P$.

Thus the pair $(X,D)$ is LC in some neighborhood $U\ni P$.
Since $D_1\cap D_2\cap D_3=\{P\}$, $P$ is a center of LC singularities for $(X,D)$.
Let $H$ be a general hyperplane section through 
$P$. Write $\lambda D_4\qq H$, where $\lambda>0$.
If $(X,D)$ is not LC in $X\setminus U$, then 
the locus of log canonical singularities of the pair 
$(X,D+\epsilon H-(\lambda \epsilon +\delta)D_4)$ is 
not connected for $0<\delta\ll \epsilon \ll 1$.
This contradicts Connectedness Lemma \cite{Shokurov-1992-re}, \cite{Utah}.
Therefore the pair $(X,D)$ is LC.
\end{proof}

\begin{case}
\label{proof-theorem-13}
\textbf{Proof of \ref{theorem-main-13} of Theorem \ref{theorem-main}.}
By Lemma \ref{lemma-LC-1} 
the pair $(X, S_1+S_3+S_4+S_5)$ is LC.
Since $K_X+S_1+S_3+S_4+S_5\sim 0$, it is easy to see that $a(E,S_1+S_3+S_4+S_5)=-1$.
Thus $K_{\tilde X}+\tilde S_1+\tilde S_3+\tilde S_4+\tilde S_5=f^*(K_X+S_1+S_3+S_4+S_5)\sim 0$.
Therefore the pairs $(\bar X, \bar S_1+\bar S_3+\bar S_4+\bar S_5+\bar E)$ 
and $(\hat X, \hat S_3+\hat S_4+\hat S_5+\hat E)$ are also LC.
It follows from  Proposition \ref{proposition-q13} and its proof that 
$\hat X\simeq \PP(1^3,2)$,
$\hat E\sim \hat S_3\sim \hat S_4\sim \Theta$, and $\hat S_5\sim 2\Theta$.
We claim that $\hat S_3+\hat S_4+\hat S_5+\hat E$ 
is a toric boundary (for a suitable choice of coordinates in $\PP(1^3,2)$). 
Let $(x_1:x_1': x_1'': x_2)$ be homogeneous coordinates in $\PP(1^3,2)$.
Clearly, we may assume that $\hat E=\{x_1=0\}$, $\hat S_3=\{x_1'=0\}$, and 
$\hat S_4=\{\alpha x_1+\alpha' x_1'+\alpha'' x_1''=0\}$ for some constants
$\alpha$, $\alpha'$, $\alpha''$. Since
$(\hat X, \hat S_3+\hat S_4+\hat E)$ is LC, $\alpha''\neq 0$ and
after a coordinate change we may assume that $\hat S_4=\{x_1''=0\}$.
Further, the surface $\hat S_5$ is given by the equation $\beta x_2+\psi(x_1,x_1', x_1'')=0$,
where $\beta$ is a constant and $\psi$ is a quadratic form.
If $\beta =0$, then $\hat S_3\cap \hat S_4\cap \hat E\cap \hat S_5\neq \emptyset$
and the pair $(\hat X, \hat S_3+\hat S_4+\hat S_5+\hat E)$ cannot be  LC.
Thus $\beta \neq 0$ and after a coordinate change we may assume that
$\hat S_5=\{x_2=0\}$. Therefore $\hat S_3+\hat S_4+\hat S_5+\hat E$ 
is a toric boundary. 
Then by Lemma \ref{lemma-birational-toric-pairs-mck} below the varieties $\bar X$,
$\tilde X$, and $X$ are toric.
This proves \ref{theorem-main-13} of Theorem \ref{theorem-main}.
\end{case}

\begin{lemma}[see, e.g., {\cite[3.4]{McKernan-2001}}]
\label{lemma-birational-toric-pairs-mck}
Let $V$ be a toric variety and let $\Delta$ be the toric \textup(reduced\textup) boundary.
Then every valuation $\nu$ 
with discrepancy $-1$ with respect to $K_V+\Delta$ is toric, that is,
there is a birational toric morphism $\tilde V\to V$ such that 
$\nu$ corresponds to an exceptional divisor.
\end{lemma}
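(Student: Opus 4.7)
The plan is to use the standard dictionary between torus-invariant divisorial valuations and lattice points in the fan of $V$. Write $\Sigma \subset N_{\mathbb{R}}$ for the fan of $V$; the toric divisorial valuations of $\CC(V)$ correspond bijectively to primitive lattice vectors $v \in N \cap |\Sigma|$, and such a valuation is realized as an exceptional divisor of the toric star subdivision of $\Sigma$ along the ray $\mathbb{R}_{\ge 0} v$. One inclusion of the lemma is immediate: if $\nu$ is toric, then in the corresponding star subdivision $\pi\colon V' \to V$ one has $\pi^{*}(K_V + \Delta) = K_{V'} + \Delta'$ since both sides represent the trivial class of the respective toric boundaries, and the new exceptional divisor $E_\nu$ appears in $\Delta'$ with coefficient $1$, so $a(E_\nu, \Delta) = -1$. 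The nontrivial direction is the converse.

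For the converse, I would first reduce to the smooth SNC case. Take a toric log resolution $\mu\colon \tilde V \to V$; then $\tilde V$ is smooth toric, the strict total transform $\tilde \Delta$ is an SNC toric boundary, and $K_{\tilde V} + \tilde \Delta = \mu^{*}(K_V + \Delta)$, so $a(\nu, \tilde \Delta) = a(\nu, \Delta) = -1$. The core step is then a local statement: on a smooth variety $\tilde V$ with SNC boundary $\tilde \Delta = \sum D_i$, any divisorial valuation $\nu$ with $a(\nu, \tilde \Delta) = -1$ is a \emph{monomial valuation} in local coordinates $x_1, \dots, x_n$ at its center, where $x_1, \dots, x_k$ cut out those $D_i$ containing the center of $\nu$, and in addition $\nu(x_j) = 0$ for $j > k$. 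Granting this, the monomial valuation with weights $w_i := \nu(x_i)$ is identified with the primitive lattice point $\sum_{i \le k} w_i\, e_i$ in the cone of $\tilde \Sigma$ spanned by the rays $e_i$ of $D_i$, $i \le k$; extracting it is the toric star subdivision along that lattice point, and composing with $\mu$ produces the required toric birational morphism $\tilde V \to V$.

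The main obstacle is the monomial characterization of discrepancy $-1$ valuations on an SNC pair, since a priori $\nu$ is an abstract divisorial valuation of $\CC(\tilde V)$ rather than one given by an explicit monomial formula. The standard argument is by induction on $\dim \tilde V$: pick $D_i$ with $\nu(x_i) > 0$ and apply adjunction, which produces an SNC log Calabi--Yau pair on $D_i$ to which $\nu$ restricts as a valuation of log discrepancy $0$; by the inductive hypothesis this restriction is monomial. A direct comparison of $\nu$ with the candidate monomial valuation on $\tilde V$ having the same weights $\nu(x_1), \dots, \nu(x_n)$, together with the sign constraint $\sum_{j>k} \nu(x_j) - 1 = -1$ coming from $a(\nu,\tilde \Delta) = -1$, forces the equality. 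This completes the toric extraction described above.
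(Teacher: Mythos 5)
The paper offers no proof of this lemma at all --- it is imported from McKernan's paper (the citation [McK01, 3.4]) --- so there is no in-paper argument to measure you against; I can only assess your sketch on its own terms. Your architecture is the standard one and agrees in substance with the cited source: the trivial direction via log crepancy of toric morphisms; reduction to a smooth toric SNC model using $K_{\tilde V}+\tilde\Delta=\mu^{*}(K_V+\Delta)$ (valid because $K_V+\Delta\sim 0$ is Cartier); the identification of discrepancy $-1$ valuations over a reduced SNC pair with monomial valuations whose weights are supported exactly on the boundary components through the center; and the extraction of such a valuation by a star subdivision along the corresponding lattice point. Each of these statements is correct, and the final toric bookkeeping (primitivity of the weight vector, composition with the toric log resolution) is fine.

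The step that does not survive scrutiny as written is your induction for the monomial characterization. A divisorial valuation $\nu$ of $\CC(\tilde V)$ does not ``restrict'' to a valuation of $\CC(D_i)$: if $\nu=\ord_E$ on a model $Y\to\tilde V$, the strict transform of $D_i$ on $Y$ may be disjoint from $E$, or may meet it along several components, so the object to which you want to apply the inductive hypothesis is not defined without a further connectedness/adjunction argument --- and supplying that argument is essentially as hard as the lemma itself. The standard repair replaces induction on dimension by induction on the length of a tower of smooth blowups realizing $E$ (Zariski's lemma): if a smooth center $W$ of codimension $c$ lies on exactly $m$ components of the reduced SNC boundary, then $m\le c$ and the exceptional divisor of the blowup of $W$ has discrepancy $c-1-m\ge -1$ with respect to the pair, with equality precisely when $W$ is a stratum; adding that exceptional divisor to the boundary reproduces the hypotheses one level up, so a valuation of discrepancy $-1$ forces every intermediate center in the tower to be a stratum of the current boundary. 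Since blowups of strata of the toric boundary are toric morphisms, this yields the lemma directly and even lets you bypass the explicit monomial description. With that substitution your proof closes; as it stands, the inductive step is a genuine gap.
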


\section{Case $\qQ(X)=17$}
Consider the case $\qQ(X)=17$. 
By Proposition \ref{prop-comput}\ $\B=(2,3,5,7)$.
For $r=2$, $3$, $5$, $7$, let $P_r$ be a (unique) point of index $r$.
In notation of \S \ref{sect-bir-constr}, take
$\MMM:=|5A|$ and apply Construction \eqref{eq3.1}. 
Near $P_7$ we have $A\sim -5K_X$ and $\MMM\sim -4K_X$.
By Lemma \ref{lemma-ct} we get $c\le 1/4$. 
In particular, the pair $(X,\MMM)$ is not canonical.

\begin{proposition}
In the above notation, $f$ is the Kawamata blowup of $P_7$, $g$ is birational,
it contracts $\bar S_2$,
and $\hat X\simeq \PP(1^2,2,3)$. Moreover, $\hat S_3\sim \hat S_5\sim \Theta$, $\hat E\sim 2\Theta$,
and $\hat S_7\sim 3\Theta$.
\end{proposition}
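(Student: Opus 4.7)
The strategy mirrors the proof of Proposition~\ref{proposition-q13}. By Proposition~\ref{prop-comput}, $|kA|\neq\emptyset$ for $k\in\{2,3,5,7\}$, so I fix reduced irreducible members $S_2\in|2A|$, $S_3\in|3A|$ together with general members $S_5\in\MMM=|5A|$ and $S_7\in|7A|$. The identities
\[
17=5\cdot 2+7=2+3\cdot 5=4\cdot 3+5=3+2\cdot 7
\]
yield four linear equivalences on $X$ which, after crepant pullback by $f$ and passage to $\bar X$, become
\begin{align*}
K_{\bar X}+5\bar S_2+\bar S_7+a_1\bar E&\sim 0,\\
K_{\bar X}+\bar S_2+3\bar S_5+a_2\bar E&\sim 0,\\
K_{\bar X}+4\bar S_3+\bar S_5+a_3\bar E&\sim 0,\\
K_{\bar X}+\bar S_3+2\bar S_7+a_4\bar E&\sim 0,
\end{align*}
with numerical identities $a_1=5\beta_2+\beta_7-\alpha$, $a_2=\beta_2+3\beta_5-\alpha$, $a_3=4\beta_3+\beta_5-\alpha$, $a_4=\beta_3+2\beta_7-\alpha$. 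Lemma~\ref{lemma-ct} applied to $\MMM$ gives $c=\alpha/\beta_5\le 1/4$, equivalently $\beta_5\ge 4\alpha$, whenever $S_5$ is not Cartier at $f(E)$.

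If $f(E)$ is a curve or a Gorenstein point, then $\alpha,\beta_k\in\ZZ$ with $\alpha\ge 1$, so $\beta_5\ge 4$ and the first relation gives $a_1\ge 12$; by the argument of Proposition~\ref{proposition-q13}, $g$ must be birational with $\hat q\ge 13$ and $\hat E\sim\Theta$, contradicting the empty row $|A|$ in Proposition~\ref{prop-comput}. Hence $P:=f(E)$ is one of $P_2,P_3,P_5,P_7$; Theorem~\ref{theorem-Kawamata-blowup} gives $\alpha=1/r$, and I compile a four-row table of $\beta_k=m_P(k)/r+m_k$ and $a_i$, as in Proposition~\ref{proposition-q13} and in the $q=10,11$ cases; here $m_P(k)\equiv-5k\pmod r$ with $0\le m_P(k)<r$, and $m_k\in\ZZ_{\ge 0}$. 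The constraint $\beta_5\ge 4/r$ enforces $m_5\ge 2$ for $r=2$ and $m_5\ge 1$ for $r=3$, and is vacuous for $r=5,7$.

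For each $r\in\{2,3,5\}$ I rule out the case by a dichotomy: if $g$ is of fiber type, restricting the four relations to a general fiber shows that some $a_i$ is too large for $-K$ on the fiber to be divisible by it, exactly as in Proposition~\ref{proposition-q13}; if $g$ is birational, the combination of Remark~\ref{remark-eq3.3} ($\dim|-K_{\hat X}|\ge\dim|-K_X|=12$), Lemma~\ref{lemma-c-Cl}, and the four relations forces $\hat q$ into a row of Proposition~\ref{prop-comput} that is incompatible with either the admissible basket or the empty row $|A|$. This leaves only $r=7$.

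With $f$ the Kawamata blowup of $P_7$, the fibre-type case is ruled out by the same moving-divisor argument. If $\bar S_2$ were not contracted, the first relation would give $\hat q\ge 8$, and the constraints $\dim|-K_{\hat X}|\ge 12$ and Lemma~\ref{lemma-c-Cl} leave no admissible target in Proposition~\ref{prop-comput}; thus $\bar S_2=\bar F$, whence $d=2$ and $(n,e)=(1,2)$ by Lemma~\ref{lemma-c-Cl}, giving $\hat E\sim 2\Theta$ and $\Cl(\hat X)$ torsion free. Matching the four relations then pins down $m_k=0$ for all $k$, $\hat q=7$, $s_3=s_5=1$, and $s_7=3$. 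Since $\hat S_3$ and $\hat S_5$ are distinct effective divisors with $\hat S_3\qq\hat S_5\qq\Theta$, part~\ref{theorem-main-q7} of Theorem~\ref{theorem-main}, already proved in~\ref{subsection-proof-q=5and7}, yields $\hat X\simeq\PP(1^2,2,3)$; the claimed class identities $\hat S_3\sim\hat S_5\sim\Theta$, $\hat E\sim 2\Theta$, $\hat S_7\sim 3\Theta$ then follow by direct substitution. The main obstacle is the case analysis for $r\in\{2,3,5\}$, which parallels but is more intricate than that of Proposition~\ref{proposition-q13} because four residues rather than three must be tracked.
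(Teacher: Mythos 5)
Your strategy is the paper's own: the same birational construction, the same style of linear relations on $\bar X$ coming from additive decompositions of $17$ (the paper uses $17=7\cdot2+3=2+5\cdot3=2+3\cdot5=3+2\cdot7$; your choice of decompositions is an immaterial variation), and the same endgame via part \ref{theorem-main-q7} of Theorem \ref{theorem-main}. The problem is that the core of the proof is not actually carried out. Eliminating $r\in\{2,3,5\}$, ruling out the fiber-type contractions, and pinning down $e$, $\hat q$, $s_3$, $s_5$, $s_7$ in the case $r=7$ is where all the work lies: in the paper this is a page of Diophantine analysis in the auxiliary quantities $u=s_2+em_2$, $v=s_3+em_3$, $w=s_5+em_5$, and you replace it by the assertion that it ``parallels'' Proposition \ref{proposition-q13} while conceding it is more intricate. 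That is a genuine gap, not a compression. In particular, your claim that $\bar S_2=\bar F$ forces $(n,e)=(1,2)$ does not follow from $d=ne=2$ alone (the alternative $(n,e)=(2,1)$ must be excluded); the paper first derives $e=2$ from the relations and only then gets torsion-freeness of $\Cl(\hat X)$ from Lemma \ref{lemma-c-Cl}.

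Moreover, two concrete errors in your setup would derail the deferred case analysis. First, the residue formula $m_P(k)\equiv-5k\pmod r$ is wrong: near $P_r$ one has $A\sim-l_rK_X$ with $17l_r\equiv1\pmod r$, so the fractional part of $\beta_k$ is $\overline{kl_r}/r$ with $l_2=1$, $l_3=2$, $l_5=3$, $l_7=5$; e.g.\ for $r=7$, $k=2$ this gives $3/7$, whereas your formula gives $\overline{-10}/7=4/7$. Second, the constraint $\beta_5\ge4\alpha$ is \emph{not} vacuous for $r=5$: there $\beta_5=m_5$, so it forces $m_5\ge1$, which Claim \ref{claim-q17} records and which the paper needs to close the $r=5$ case via $s_5+3m_5=w=4$. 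Smaller issues: in the Gorenstein/curve case your bound $a_1\ge12$ does not follow from $a_1=5\beta_2+\beta_7-\alpha$, since you have no lower bound on $\beta_7$ (the correct move is $a_2=\beta_2+3\beta_5-\alpha\ge11$, which is exactly the paper's inequality); and you never record the inequalities of the type $\beta_2+\beta_3\ge\beta_5$ (from $S_2+S_3\sim S_5$) that the paper's Claim \ref{claim-q17} extracts and uses to bound $m_2+m_3$ from below.
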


\begin{proof}
Similar to \eqref{eq-rel-Cl-10-1}-\eqref{eq-rel-Clb-1-10} we have
for some $a_1,a_2,a_3\in \ZZ$:
\begin{equation}
\label{eq-rel-Cl-17}
\begin{array}{lll}
K_{\bar X}+ 7\bar S_2+\bar S_3+a_1\bar E&\sim& 0,
\\[4pt]
K_{\bar X}+ \bar S_2+5\bar S_3+a_2\bar E&\sim &0,
\\[4pt]
K_{\bar X}+ \bar S_2+3\bar S_5+a_3\bar E&\sim& 0,
\end{array}
\end{equation}
\begin{equation}
\label{eq-beta-17}
\begin{array}{lll}
7\beta_2+\beta_3&=&a_1+\alpha,
\\[4pt]
\beta_2+5\beta_3&=&a_2+\alpha,
\\[4pt]
\beta_2+3\beta_5&=&a_3+\alpha.
\end{array}
\end{equation}
Since $S_5\in \MMM$ 
is a general member, 
by \eqref{eq-3-ct} we have
$c=\alpha/\beta_5\le 1/4$, so $4\alpha\le \beta_5$
and $a_3\ge 11\alpha+\beta_2$.
Since $S_2+S_3\sim S_5$, we have 
$\beta_2+\beta_3\ge \beta_5\ge 4\alpha$. 
Hence, $a_1\ge 6\beta_2+3\alpha$ and $a_2\ge 4\beta_3+3\alpha$.

First we consider the case where $f(E)$ is either a curve or a 
Gorenstein point on $X$.
Then $\alpha$ and $\beta_k$ are integers.
In particular, $a_3\ge 11$ and by the third relation in \eqref{eq-rel-Cl-17} 
we obtain that $g$ is birational. 
Moreover, $\hat q\ge 11$.
In particular, the group $\Cl(\hat X)$ is torsion free and so
$\hat E\ge 2\Theta$.
Hence, $\hat q\ge 2a_3\ge 22$, a contradiction.

Therefore $P:=f(E)$ is a 
non-Gorenstein point of index $r=2,3, 5$ or $7$.
Similar to \ref{explanations-beta-10} we have $\alpha=1/r$ and
{\scriptsize
\[
\begin{array}{l|llll|lll}
r&\beta_2&\beta_3&\beta_5&\beta_7&a_1&a_2&a_3
\\
\hline
&&&&
\\
2&m_2&\frac12+m_3&\frac12+m_5&\frac12+m_7&7m_2+m_3&2+m_2+5m_3&1+m_2+3m_5
\\[10pt]
3&\frac13+m_2&m_3&\frac13+m_5&\frac23+m_7&2+7m_2+m_3&m_2+5m_3&1+m_2+3m_5
\\[10pt]
5&\frac15+m_2&\frac45+m_3&m_5&\frac15+m_7&2+7m_2+m_3&4+m_2+5m_3&m_2+3m_5
\\[10pt]
7&\frac37+m_2&\frac17+m_3&\frac47+m_5&m_7&3+7m_2+m_3&1+m_2+5m_3&2+m_2+3m_5
\end{array} 
\]
}
\begin{claim}
\label{claim-q17}
\begin{enumerate}
\item 
If $r=2$, then $m_5\ge 2$ and $m_2+m_3\ge 2$. 
\item 
If $r=3$, then $m_5\ge 1$ and $m_2+m_3\ge 1$.
\item 
If $r=5$, then $m_5\ge 1$.
\end{enumerate}
\end{claim}
\begin{proof}
Note that $1/4\ge c=\alpha/\beta_5=1/r\beta_5$ and $r\beta_5\ge 4$.
This gives us inequalities for $m_5$.
The inequalities for $m_2+m_3$ follows from $\beta_2+\beta_3\ge \beta_5$.
\end{proof}

From this we have $\min (a_1,a_2, a_3)\ge 3$.
Moreover, the equality $\min (a_1,a_2, a_3)=3$ holds only if $r=7$.
Therefore the contraction
$g$ can be of fiber type only if $a_1=3$, $r=7$, $m_2=m_3=0$,
$\min (a_1,a_2, a_3)=3$,
$r=7$, $m_2=m_3=m_5=0$, $a_3=2$, and $a_2=1$. 
Then $g$ is a del Pezzo fibration of degree
$9$ and by the first relation in \eqref{eq-rel-Cl-17} divisors $\hat S_2$ and $\hat S_3$
are $g$-vertical. But then $a_2=3$, a contradiction. 
From now on we assume that $g$ is birational.

Since $\bar S_5$ is moveable, it is not contracted.
Therefore, $s_5\ge 1$. By \eqref {eq-rel-Cl-17} we have
\begin{equation*}
\begin{array}{lll}
\hat q&=& 7s_2+s_3+a_1e, 
\\[4pt]
\hat q&=&s_2+5s_3+a_2e, 
\\[4pt]
\hat q&=&s_2+3s_5+a_3e.
\end{array}
\end{equation*}
Put 
\[
u:=s_2+em_2,\qquad v:=s_3+em_3,\qquad w:=s_5+em_5. 
 \]

\begin{case} \textbf{Case: $r=2$.} 
Then $a_3\ge 7$ and $\hat q\ge 3s_5+a_3\ge 10$.
Hence the group $\Cl(\hat X)$ is torsion free. So, $e\ge 2$ and
$\hat q\ge 3s_5+2a_3\ge 17$. In this case $|\Theta|=\emptyset$.
Therefore,
$s_5\ge 2$ and $\hat q\ge 3s_5+2a_3\ge 20$, a contradiction.
\end{case}

\begin{case} \textbf{Case: $r=3$.} 
Then 
\[
\begin{array}{llrll}
\hat q & =& 7s_2+s_3+(2+7m_2+m_3)e&=& 7u+v+2e,
\\[4pt]
\hat q & =& s_2+5s_3+(m_2+5m_3)e&=&u+5v, 
\\[4pt]
\hat q & =& s_2+3s_5+(1+m_2+3m_5)e&=& u+3w+ e.
\end{array}
\]
Assume that $u>0$. Then $\hat q\ge 9$. Hence the group $\Cl(\hat X)$ is torsion free and $e\ge 2$.
Since $\dim |S_5|=1$ and $\dim |\Theta|\le 0$, we have $s_5\ge 2$. 
Since $m_5\ge 1$ (see Claim \ref{claim-q17}), we have $w\ge 4$ and $\hat q>13$.
In this case, $s_5\ge 5$, a contradiction.

Therefore, $u=0$, $m_2=0$, $s_3\neq 0$, $m_3\ge 1$, and
$v\ge 2$. So, 
$\hat q =5v\ge 10$.
Then we get a contradiction by \ref{theorem-main-10} of Theorem \ref{theorem-main}.
\end{case}

\begin{case} \textbf{Case: $r=5$.} 
Then 
\[
\begin{array}{lllll}
\hat q & =& 7s_2+s_3+(2+7m_2+m_3)e& =&7u+v+2e,
\\[4pt]
 \hat q & =& s_2+5s_3+(4+m_2+5m_3)e& =&u+5v+4e, 
\\[4pt]
\hat q & =& s_2+3s_5+(m_2+3m_5)e& =&u+3w.
\end{array}
\]
From the first two relations we have 
$3u=2v+e$ and $1\le u\le 2$. 
Further, $\hat q-4u=3(v+e)$, so $\hat q \equiv u\mod 3$.

If $u=2$, then 
$e$ is even and $\hat q=14+v+2e\ge 18$. So, $\hat q=19$,
a contradiction.

Thus $u=1$, $3=2v+e$, and $\hat q=7+v+2e\ge 9$. 
By \ref{theorem-main-10} of Theorem \ref{theorem-main} $\hat q$ is odd.
Hence, $v$ is even, $e=3$, $v=0$, $\hat q=13$. In this case,
$s_5+3m_5=w=4$. By Claim \ref{claim-q17} $m_5=s_5=1$.
Note that the group $\Cl(\hat X)$ is torsion free
and $s_2=1$. Thus $\dim |\Theta|>0$.
This contradicts Proposition \ref{prop-comput}.
\end{case}

\begin{case} \textbf{Case: $r=7$.} 
Then 
\[
\begin{array}{lllll}
\hat q & =& 7s_2+s_3+(3+7m_2+m_3)e& =& 7u+v+3e,
\\[4pt]
 \hat q & =& s_2+5s_3+(1+m_2+5m_3)e& =&u+5v+e, 
\\[4pt]
\hat q & =& s_2+3s_5+(2+m_2+3m_5)e& =&u+3w+2e.
\end{array}
\]
Assume that 
$u>0$. Then $\hat q\ge 10$, the group $\Cl(\hat X)$ is torsion free
and so $e\ge 2$, $\hat q\ge 13$, $u=1$.
From the first two relations we get $\hat q+2=7v$.
Hence, $v=3$, $\hat q=19$, $e=3$, and $s_2=0$. This contradicts
the equality $1=u=s_2+em_2$.

Therefore, $u=0$ and $s_2=m_2=0$.
From the first two relations we get $\hat q=7v$.
Thus, $\hat q=7$, $v=1$, $e=2$, $w=1$, $m_3=m_5=0$, and
$s_3=s_5=1$.
By Lemma \ref{lemma-c-Cl} the group $\Cl(\hat X)$ is torsion free
and so $\dim |\Theta|\ge \dim |\bar S_5|>0$.
From \ref{theorem-main-q7} of Theorem \ref{theorem-main} we have 
$\hat X\simeq \PP(1^2,2,3)$. In particular, $\dim |\Theta|=1$. 
Further, similar to \eqref{eq-rel-Cl-17} we have
\begin{equation*}
\label{eq-rel-Cl-17-4-pp}
\begin{array}{l}
K_{\bar X}+ \bar S_3+2\bar S_7+a_4\bar E\sim 0,
\\[4pt]
\beta_3+2\beta_7=a_4+\alpha.
\end{array}
\end{equation*}
This gives us
$a_4=2\beta_7$ and
$s_7+a_4=3$. 
Since $\dim |S_7|=2$, $s_7>1$,
$s_7=3$, $\hat S_7\sim 3\Theta$, $a_4=0$, and $\beta_7=0$, i.e., $P_7\notin S_7$.
\end{case}
\end{proof}

\begin{lemma}
\label{lemma-s-CAP-s-CAP-s-17}
\begin{enumerate}
\item
$S_2\cap S_3$ is a reduced irreducible curve.
\item
$S_2\cap S_3\cap S_5=\{P_7\}$.
\end{enumerate}
\end{lemma}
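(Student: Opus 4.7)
The plan is to follow the proof of Lemma~\ref{lemma-s-CAP-s-CAP-s-13} almost verbatim, with the substitution $S_1\leadsto S_2$, $S_4\leadsto S_5$, $S_5\leadsto S_7$, and $P_5\leadsto P_7$. The required numerical inputs, all read off from $A^3=1/210$ (Proposition~\ref{prop-comput}), are
\[
S_2\cdot S_3\cdot S_7=\tfrac15,\qquad S_2\cdot S_3\cdot S_5=\tfrac17,\qquad S_2\cdot S_5\cdot S_7=\tfrac13,
\]
together with the fact, established in the preceding proposition, that $P_7\notin S_7$. Using the local relation $A\sim -5K_X$ at $P_7$, the divisors $S_2$, $S_3$, $S_5$ are all non-Cartier at $P_7$ and so each contains $P_7$; in particular $P_7\in S_2\cap S_3\cap S_5$, so the inclusion $\supseteq$ in (ii) is free.

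For (i), I write $S_2\cap S_3=mC+\Gamma$, where $C$ is the reduced irreducible component through $P_7$, $m\geq 1$ is its multiplicity, and $\Gamma$ is an effective $1$-cycle with no component equal to $C$. For any curve $C''\not\subset S_7$ one has $S_7\cdot C''\geq 1/5$: since $P_7\notin S_7$ the only intersection points available are smooth points (contribution $\geq 1$) and $P_r$ with $r\in\{2,3,5\}$ (contribution at least $1/r\geq 1/5$, because the local Cartier index of $S_7$ at $P_r$ is $r$). Since $P_7\in C$ forces $C\not\subset S_7$, one gets $S_7\cdot C\geq 1/5$, and each component of $\Gamma$ also contributes positively. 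Intersecting with $S_7$ yields
\[
\tfrac15 = S_7\cdot(mC+\Gamma)\geq \tfrac{m}{5}+S_7\cdot\Gamma,
\]
which forces $m=1$ and $\Gamma=0$.

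For (ii), suppose $P\in S_2\cap S_3\cap S_5$ with $P\neq P_7$; then $P,P_7\in C\cap S_5$. If $C\not\subset S_5$, summing the local contributions at $P_7$ (at least $1/7$, since $S_5$ has local Cartier index $7$ at $P_7$) and at $P$ (at least $1/5$, since $P$ is smooth or a singularity of index $\leq 5$) gives $S_5\cdot C\geq 1/7+1/5>1/7$, contradicting $S_5\cdot C=1/7$. Hence $C\subset S_5$. Write $S_2\cap S_5=aC+\Delta$ with $a\geq 1$ and $\Delta$ effective with no component equal to $C$. If some component $C'$ of $\Delta$ were not contained in $S_7$, then $S_7\cdot(C+C')\geq 2/5>1/3$ would contradict $S_2\cdot S_5\cdot S_7=1/3$, so $\Supp\Delta\subset S_7$ and in particular $P_7\notin\Supp\Delta$.

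The final step, which I expect to be the main subtlety, is the integrality argument. Since $A$ has local Cartier index $r$ at $P_r$ for $r\in\{2,3,5\}$ and is non-Cartier at $P_7$, the divisor $30A$ is Cartier on $X\setminus\{P_7\}$, and as $\Supp\Delta$ avoids $P_7$, the number $30A\cdot\Delta$ must be a non-negative integer. However,
\[
30A\cdot\Delta=30A\cdot S_2\cdot(S_5-aS_3)=60(5-3a)A^3=\frac{10-6a}{7},
\]
which is negative for $a\geq 2$ (contradicting ampleness of $30A$ together with effectiveness of $\Delta$) and equals $4/7\notin\ZZ$ for $a=1$. This eliminates both cases. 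The essential bookkeeping is tracking the local Cartier indices of each $S_k$ at each $P_r$ so that the bounds $1/5$, $1/7$, $1/3$ on local intersection multiplicities are applied legitimately; once these are verified, the argument is a mechanical transcription of Lemma~\ref{lemma-s-CAP-s-CAP-s-13}.
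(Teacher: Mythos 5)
Your proof is correct and takes essentially the same approach as the paper, which for (i) simply says ``similar to Lemma~\xref{lemma-s-CAP-s-CAP-s-13}'' and for (ii) runs the same chain: $C\subset S_5$ from $S_2\cdot S_3\cdot S_5=1/7$, exclusion of extra components of $S_2\cap S_5$ outside $S_7$, and the non-integrality of $30A\cdot\Gamma=4/7$. If anything, your write-up is the cleaner one: you carry the multiplicity $a$ of $C$ in $S_2\cap S_5$ explicitly and use the correct comparison $S_2\cdot S_5\cdot S_7=1/3<2/5$, whereas the printed proof has the typos ``$C:=S_3\cap S_4$'' and ``$7/15=S_2\cdot S_7\cdot S_7\ge 2/5$, a contradiction'' (which is not literally a contradiction as stated).
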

\begin{proof}
(i) 
Similar to the proof of (i) of Lemma \ref{lemma-s-CAP-s-CAP-s-13}.

(ii) Put $C:=S_3\cap S_4$. Assume that $S_2\cap S_3\cap S_5\ni P\neq P_7$.
Since $1/7=S_2\cdot S_3\cdot S_5=S_5\cdot C$ and $P,\, P_7\in S_5\cap C$, we have 
$C\subset S_5$. 
If there is a component $C'\neq C$ of $S_2\cap S_5$ not contained in $S_7$, then, as above, 
$7/15=S_2\cdot S_7\cdot S_7\ge S_7\cdot C+S_7\cdot C'\ge 2/5$, a contradiction.
Thus we can write $S_2\cap S_5=C+\Gamma$, where 
$\Gamma$ is an effective $1$-cycle with $\Supp \Gamma \subset S_7$.
In particular, $P_7\notin \Gamma$.
The divisor $30A$ is Cartier at $P_2$, $P_3$, and $P_5$.
We get
\[
\frac{120}{210}= 120A^3  = 30A\cdot S_2\cdot (S_5-S_3)= 30A\cdot \Gamma\in \ZZ,
\]
a contradiction.
\end{proof}
Now the proof of \ref{theorem-main-17} of Theorem \ref{theorem-main}
can be finished similar to \ref{proof-theorem-13}:
the pair $(\hat X, \hat S_3+\hat S_5+\hat E+\hat S_7)$ is LC and
the corresponding discrepancy of $\bar S_2$ is equal to $-1$.

\section{Case $\qQ(X)=19$}
Consider the case $\qQ(X)=19$. By Proposition \ref{prop-comput}\ $\B=(3,4,5,7)$.
For $r=3$, $4$, $5$, $7$, let $P_r$ be a (unique) point of index $r$.
In notation of \S \ref{sect-bir-constr}, take
$\MMM:=|7A|=|S_7|$ and apply Construction \eqref{eq3.1}.
Near $P_5$ we have $A\sim -4K_X$ and $\MMM\sim -3K_X$.
By Lemma \ref{lemma-ct} we get $c\le 1/3$. 
In particular, the pair $(X,\MMM)$ is not canonical.

\begin{proposition}
\label{proposition-q19}
In the above notation, $f$ is the Kawamata blowup of $P_5$, $g$ is birational,
it contracts $\bar S_3$,
and $\hat X\simeq \PP(1^2,2,3)$.
Moreover, $\hat S_4\sim \hat S_7\sim \Theta$, $\hat E\sim 3\Theta$,
and $\hat S_5\sim 2\Theta$.
\end{proposition}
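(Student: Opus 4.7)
The plan is to follow the template of Section \xref{sect-q13} and the $\qQ(X) = 17$ case above. Exploiting the decompositions $3 + 4 + 5 + 7 = 3 \cdot 3 + 2 \cdot 5 = 4 + 3 \cdot 5 = 4 \cdot 3 + 7 = 19$ together with torsion-freeness of $\Cl(X)$ (Proposition \ref{prop-comput}), first record the four linear equivalences
\[
K_X + S_3 + S_4 + S_5 + S_7 \sim 0, \qquad K_X + 3 S_3 + 2 S_5 \sim 0,
\]
\[
K_X + S_4 + 3 S_5 \sim 0, \qquad K_X + 4 S_3 + S_7 \sim 0,
\]
pull them back via \eqref{eq-3-fol-discr}, and obtain four identities $K_{\bar X} + (\cdots) + a_i \bar E \sim 0$ with integer $a_i = (\text{integer combination of } \beta_k) - \alpha$. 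Lemma \xref{lemma-ct} applied at $P_5$ (where $\MMM \sim -3 K_X$) gives $c = \alpha/\beta_7 \le 1/3$, so $\beta_7 \ge 3\alpha$; and $S_3 + S_4 \in |7A| = \MMM$ with $S_7$ a general member gives $\beta_3 + \beta_4 \ge \beta_7$.

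Next I would exclude the possibility that $f(E)$ is a curve or a Gorenstein point: then $\alpha, \beta_k$ are non-negative integers with $\alpha \ge 1$, so $\beta_7 \ge 3$, the $a_i$ become large, and pushing the four relations down to $\hat X$ forces $g$ birational with $\hat q$ exceeding the allowed list of Proposition \ref{prop-comput} --- verbatim as in the $\qQ(X) = 17$ case. Hence by Theorem \ref{theorem-Kawamata-blowup}, $f$ is the Kawamata blowup of some $P_r$ with $r \in \{3, 4, 5, 7\}$ and $\alpha = 1/r$. Tabulate $\beta_k = \overline{k l_r}/r + m_k$ (with $19 l_r \equiv 1 \bmod r$) and the corresponding $a_i$ in terms of $m_k \in \ZZ_{\ge 0}$, introduce $u = s_3 + e m_3$, $v = s_4 + e m_4$, $w = s_5 + e m_5$, $z = s_7 + e m_7$, and convert the four push-downs (when $g$ is birational) to a linear system in $(u, v, w, z, e, \hat q)$. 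This system is constrained by $s_7 \ge 1$; by $z \ge 1 + e$ when $r \ne 5$ (from $\beta_7 \ge 3/r$); by parities needed for $v, w \in \ZZ_{\ge 0}$; and by Lemma \xref{lemma-c-Cl}, which identifies the $g$-exceptional divisor $F$ with an effective member of $|n e A|$, where $n$ is the torsion order of $\Cl(\hat X)$ and $d = n e$. Since $|A| = |2A| = \emptyset$ and $|kA|$ is a singleton for $k = 3, 4, 5, 6$, the divisor $F$ is sharply pinned down in most cases.

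The main step is the case-by-case elimination of $r \in \{3, 4, 7\}$, combining the allowed list of $\hat q$ from Proposition \ref{prop-comput}, the already-proved exclusion $\hat q \ne 10$ of part \xref{theorem-main-10} and part \xref{theorem-main-13} of Theorem \ref{theorem-main} (which together with $\dim|-K_{\hat X}| \ge \dim|-K_X| = 8$ forces $\hat X \simeq \PP(1, 3, 4, 5)$ whenever $\hat q = 13$), the torsion restrictions of Proposition \xref{lemma-torsion}, and parity plus $|A| = |2A| = \emptyset$. The fiber-type branch is handled uniformly: $\bar S_7$ is moveable of positive dimension, so it cannot lie in generic fibres while keeping the $a_i$ compatible with a small-degree Mori fibration. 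The only surviving configuration is $r = 5$ with $\hat q = 7$ and $(u, v, w, z, e) = (0, 1, 2, 1, 3)$. Then $\dim|\Theta| \ge \dim|\bar S_7| = 1$, so part \xref{theorem-main-q7} of Theorem \ref{theorem-main} (proved in \xref{subsection-proof-q=5and7}) identifies $\hat X \simeq \PP(1^2, 2, 3)$; $s_3 = 0$ shows the unique contracted divisor is $\bar S_3$; and reading off $\hat S_k \qq s_k \Theta$, $\hat E \qq e \Theta$ yields $\hat S_4 \sim \hat S_7 \sim \Theta$, $\hat S_5 \sim 2 \Theta$, $\hat E \sim 3 \Theta$, as required.

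The hard part will be the enumeration in Step 3, and especially the $r = 3$ branch: the bound $c \le 1/3$ is not particularly tight there, and several numerically consistent candidates $(u, v, w, z, e, \hat q)$ arise, each needing a separate elimination argument weaving together Lemma \xref{lemma-c-Cl}, Proposition \xref{lemma-torsion}, the vanishings $|A| = |2A| = \emptyset$, and the already-established parts of Theorem \ref{theorem-main}.
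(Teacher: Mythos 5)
Your overall strategy is the paper's own: four integral linear relations coming from decompositions of $19$, push-down through the Sarkisov link, exclusion of the Gorenstein/curve case for $f(E)$, a case analysis over $r\in\{3,4,5,7\}$ in the variables $u,v,w$ (the paper does not need your $z$, but it is harmless), and the same terminal configuration $r=5$, $\hat q=7$, $e=3$, $s_3=0$, $s_4=s_7=1$, $s_5=2$, identified via part \xref{theorem-main-q7} of Theorem~\ref{theorem-main}. Your choice of decompositions ($3+4+5+7$, $3\cdot3+2\cdot5$, $4+3\cdot5$, $4\cdot3+7$) differs from the paper's ($5\cdot3+4$, $3+4\cdot4$, $4+3\cdot5$, $5+2\cdot7$) but generates the same two-parameter linear system, so the enumeration is equivalent in principle; note only that the paper's pair $5\bar S_3+\bar S_4$, $\bar S_3+4\bar S_4$ is chosen so that $a_1+a_2\ge 13$ drops out immediately in the Gorenstein/curve case, whereas with your relations that step is not ``verbatim'' and has to be redone (e.g.\ $3\beta_3+2\beta_5-\alpha$ can a priori be small), and your appeal to part \xref{theorem-main-13} as an elimination tool is legitimate but not needed in the paper.

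The one genuine gap is your treatment of the fiber-type branch. The claim that ``$\bar S_7$ is moveable of positive dimension, so it cannot lie in generic fibres'' does not work: over a curve a one-dimensional mobile system can perfectly well be vertical (a multiple of the fiber class), and over a surface no contradiction follows from degree counting alone. The paper's actual argument is different and uses the rigid divisors, not the mobile one: if $\hat X$ is a surface, the first two relations force $\bar S_3$ and $\bar S_4$ to be vertical over ``lines'' on a del Pezzo surface with $K^2\le 6$, whence $g(\bar S_3)\sim g(\bar S_4)$ and $S_3\sim S_4$, a contradiction; if $\hat X$ is a curve, verticality of $\bar S_3,\bar S_4$ (and possibly $\bar S_5$) gives fiber multiplicities with $3k_3=4k_4(=5k_5)$, hence a fiber of multiplicity divisible by $4$, contradicting the theorem of Mori--Prokhorov \cite{Mori-Prokhorov-2008d} on multiple fibers of del Pezzo fibrations. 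This last step is a nontrivial external input that your outline does not supply, so the fiber-type case is not actually closed in your proposal.
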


\begin{proof}
Similar to \eqref{eq-rel-Cl-10-1}-\eqref{eq-rel-Clb-1-10} we have
for some $a_1,a_2,a_3, a_4\in \ZZ$:
\begin{equation}
\label{eq-rel-Cl-19}
\begin{array}{lll}
K_{\bar X}+ 5\bar S_3+\bar S_4+a_1\bar E&\sim& 0,
\\[4pt]
K_{\bar X}+ \bar S_3+4\bar S_4+a_2\bar E&\sim& 0,
\\[4pt]
K_{\bar X}+ \bar S_4+3\bar S_5+a_3\bar E&\sim &0,
\\[4pt]
K_{\bar X}+ \bar S_5+2\bar S_7+a_4\bar E&\sim &0,
\end{array}
\end{equation}
\begin{equation}
\label{eq-rel-Cl-19-u}
\begin{array}{lll}
5\beta_3+\beta_4&=&a_1+\alpha,
\\[4pt]
\beta_3+4\beta_4&=&a_2+\alpha,
\\[4pt]
\beta_4+3\beta_5&=&a_3+\alpha,
\\[4pt]
\beta_5+2\beta_7&=&a_4+\alpha.
\end{array}
\end{equation}
\begin{remark}
\label{remark-first-19}
Since $S_7\in \MMM$ is a general member, 
by \eqref{eq-3-ct} we have
$c=\alpha/\beta_7\le 1/3$, so $3\alpha\le \beta_7$
and $a_4\ge 5\alpha+\beta_5$.
Further, $S_3+S_4\sim S_7$. Thus, 
$\beta_3+\beta_4\ge \beta_7\ge 3\alpha$, 
$a_1\ge 4\beta_3+2\alpha$, and $a_2\ge 3\beta_4+2\alpha$.
\end{remark}

Assume that $\hat X$ is a surface. 
Then $\hat X$ is such as in Lemma \ref{lemma-diagram-surface}.
From the first and second relations in
\eqref{eq-rel-Cl-19}
we obtain that $S_3$ and $S_4$ are $g$-vertical. 
Since 
$\dim |\bar S_k|=0$, $\dim |g(\bar S_k)|=0$, $k=3$, $4$.
Hence, $K_{\hat X}^2\le 6$ and the curves $g(\bar S_k)$ are in fact lines on 
$\hat X$. In particular, $g(\bar S_3)\sim g(\bar S_4)$.
This implies $\bar S_3\sim \bar S_4$ and 
$S_3\sim S_4$, a contradiction.

Now assume that $\hat X$ is a curve and let $G$
be a general fiber of $g$. 
Clearly, divisors $\bar S_3$ and $\bar S_4$ 
are $g$-vertical.
If the divisor $\bar S_5$ is also
$g$-vertical, then
$k_3 \bar S_3\sim k_4 \bar S_4\sim k_5 \bar S_5\sim G$, where 
the $k_i$ are the multiplicities of corresponding fibres.
Considering proper transforms on $X$ we get 
$3k_3=4k_4=5k_5$ and so
$k_3=20k$, $k_4=14k$, $k_5=12k$ for some $k\in \ZZ$.
This contradicts the main result of \cite{Mori-Prokhorov-2008d}.
Therefore the divisor $\bar S_5$ is $g$-horizontal.
In this case the degree of the general fiber is $9$.
As above we have $k_3 \bar S_3\sim k_4 \bar S_4\sim G$,
$3k_3=4k_4$. So, $k_3=4k$, $k_4=3k$, $k\in \ZZ$.
Again by \cite{Mori-Prokhorov-2008d} $g$ has no 
fibers of multiplicity divisible by $4$.

From now on we assume that $g$ is birational.
Then 
\begin{equation}
\label{equation-qse-19}
\hat q=5s_3+s_4+a_1e=s_3+4s_4+a_2e=s_4+3s_5+a_3e.
\end{equation}

Consider the case where $f(E)$ is either a curve or a 
Gorenstein point on $X$.
Then $\alpha$ and $\beta_k$ are integers.
By Remark \ref{remark-first-19}
\[
a_1+a_2=5(\beta_3+\beta_4)+\beta_3-2\alpha\ge 13\alpha\ge 13.
\]
On the other hand, from
\eqref{equation-qse-19}
we obtain
$2\hat q\ge 6s_3+5s_4+13\ge 18$. So, $\hat q\ge 9$
(both $\bar S_3$ and $\bar S_4$ cannot be contracted).
In this case, the group $\Cl(\hat X)$ is torsion free and by Lemma \ref{lemma-c-Cl} we have 
$\hat E\ge 3\Theta$. Since $a_4\ge 5$, we have
$\hat E\sim 3\Theta$, $\hat q\ge 15$, and $\bar S_3$ is contracted. 
In this situation, $|\Theta|=\emptyset$, so $s_5,\, s_7\ge 2$.
This contradicts the fourth relation in \eqref{eq-rel-Cl-19}.

Therefore  $P:=f(E)$ is a 
non-Gorenstein point of index $r=3,4, 5$ or $7$.
Similar to \ref{explanations-beta-10} we have  $\alpha=1/r$ and
{\scriptsize
\[
\begin{array}{l|llll|lll}
 r& \beta_3& \beta_4
& \beta_5& \beta_7& a_1& a_2& a_3
\\
\hline
&&&&&&&
\\ 
 3& m_3&\frac13+m_4&\frac23+m_5&\frac13+m_7& 5m_3+m_4& 1+m_3+4m_4& 2+m_4+3m_5
\\[10pt]
4&\frac14+m_3&m_4&\frac34+m_5&\frac14+m_7&1+5m_3+m_4&m_3+4m_4&2+m_4+3m_5
\\[10pt]
5&\frac25+m_3&\frac15+m_4&m_5&\frac35+m_7&2+5m_3+m_4&1+m_3+4m_4&m_4+3m_5
\\[10pt]
7&\frac27+m_3&\frac57+m_4&\frac17+m_5&m_7&2+5m_3+m_4&3+m_3+4m_4&1+m_4+3m_5
\end{array} 
\]
}
\begin{claim}
\label{claim-q19}
\begin{enumerate}
\item 
If $r=3$ or $4$, then $m_7\ge 1$ and $m_3+m_4\ge 1$. 
\item 
If $r=7$, then $m_7\ge 1$.
\end{enumerate}
\end{claim}
\begin{proof}
To get inequalities for $m_7$ we use $1/3\ge c=\alpha/\beta_7=1/r\beta_7$, $r\beta_7\ge 3$.
The inequalities for $m_3+m_4$ follows from $\beta_3+\beta_4\ge \beta_7$.
\end{proof}

Thus, in all cases $a_1, a_2\ge 1$.
Put 
\[
u:=s_3+em_3,\qquad v:=s_4+em_4,\qquad w:=s_5+em_5. 
 \]

\begin{case} \textbf{Case: $r=3$.}
Then $u+v>e(m_3+m_4)\ge e$ by Claim \ref{claim-q19}. Further, 
\begin{equation*}
\begin{array}{lllll}
\hat q&=& 5s_3+s_4+(5m_3+m_4)e&=& 5u+ v,
\\[4pt]
\hat q&=& s_3+4s_4+(1+m_3+4m_4)e&=& u+4v+e,
\\[4pt]
\hat q&=& s_4+3s_5+(2+m_4+3m_5)e&=& v+3w+2e.
\end{array}
\end{equation*}

If $u=0$, then $v=\hat q=e+4 v$, a contradiction.

Assume that $u\ge 2$. Then $\hat q\ge 10$, $u\le 3$, the group $\Cl(\hat X)$ is torsion free
and by Lemma \ref{lemma-c-Cl} we have $e\ge 3$.
If $u=2$, then $v\ge 2$, $\hat q\ge 13$, $v=\hat q-10$, and $e\le \hat q-2-4v\le 2$,
a contradiction.
If $u=3$, then $v=2$, $e=6$, $\hat q=17$, and $m_3=m_4=0$. This contradicts Claim \ref{claim-q19}.

Therefore, $u=1$. Then $v=\hat q-5$, $19=e+3\hat q$, and
$\hat q\le 6$.
We get only one solution: 
$\hat q=6$, $u=v=w=e=1$.
Recall that $m_3+m_4\ge 1$ by Claim \ref{claim-q19}. 
Hence either $s_3=0$ and $\hat S_4\qq \hat S_5\qq \hat E\qq \Theta$
or $s_4=0$ and $\hat S_3\qq \hat S_5\qq \hat E\qq \Theta$.
In both cases $\hat S_5\not\sim \hat E$
(otherwise $\bar S_5\sim \bar E+l\bar F$ for some 
$l\in \ZZ$ and so $S_5\sim l F$, a contradiction).
Then we get a contradiction by Lemma \ref{proposition-main}.
\end{case}

\begin{case} \textbf{Case: $r=4$.}
As in the previous case, $u+v> e$ and
\begin{equation*}
\begin{array}{lllll}
\hat q&=& 5s_3+s_4+(1+5m_3+m_4)e&=&5u+v+e,
\\[4pt]
\hat q&=& s_3+4s_4+(m_3+4m_4)e&=&u+4v.
\end{array}
\end{equation*}
If $u$ is even, then so is $\hat q$. Hence, $\hat q\le 10$. From the first relation we 
have $u=0$, $\hat q=4v$, and $e=3v$.
This contradicts $u+v>e$.
Therefore $u$ is odd.

Assume that $u=1$. Then $\hat q=5+v+e=1+4v$ and $e=3v-4$.
Since $u+v>e$, there is only one possibility: 
$v=e=2$, $\hat q=9$.
Then the group $\Cl(\hat X)$ is torsion free. By Lemma \ref{lemma-c-Cl} we have
$F\in |2A|\neq \emptyset$, a contradiction.

Finally, assume $u\ge 3$. Then $u=3$ and
$\hat q=15+v+e=3+4v\ge 16$. 
Thus, $\hat q=19$, $v=4$, and $e=0$, a contradiction.
\end{case}

\begin{case} \textbf{Case: $r=7$.}
 Then
\begin{equation*}
\begin{array}{lllll}
\hat q&=& 5s_3+s_4+(2+5m_3+m_4)e&=& 5u+v+2e,
\\[4pt]
\hat q&=& s_3+4s_4+(3+m_3+4m_4)e&=& u+4v+3e,
\\[4pt]
\hat q&=& s_4+3s_5+(1+m_4+3m_5)e&=& v+3w+e.
\end{array}
\end{equation*}

In this case $u=(3v+e)/4>0$. Assume that $u\ge 2$. Then $\hat q\ge 13$ and 
the group $\Cl(\hat X)$ is torsion free. By Lemma \ref{lemma-c-Cl} we have $e\ge 3$.
Further, $u=2$, and $\hat q\ge 17$.
We get $m_3=0$, $s_3=2$, $e\ge 4$, $\hat q=19$, $e=4$, and $v=1$.
This contradicts the last relation.

Therefore, $u=1$. Then $3v+e=4$. 
Assume that $e=4$. Then $v=0$, $\hat q=13$, $w=3$, $s_4=0$, $s_3=1$, 
and $m_4=m_3=0$.
Since $\dim |\Theta|=\dim |2\Theta|=0$, we have 
$s_5\ge 3$. Recall that $m_7\ge 1$ by Claim \ref{claim-q19}. 
Hence, $\beta_7\ge 1$ and $a_4=2\beta_7\ge 2$.
This contradicts the fourth relation in \eqref{eq-rel-Cl-19}.

Therefore, $e<4$. In this case, $e=1$, $v=1$, and $\hat q=8$. 
Then $\hat E\qq \Theta$ and either $\hat S_3\qq \Theta$ or 
$\hat S_4\qq \Theta$ (because $u=v=1$).
This contradicts \ref{theorem-main-q7} of Theorem \ref{theorem-main}.
\end{case}

\begin{case} \textbf{Case: $r=5$.}
From \eqref{eq-rel-Cl-19} we obtain
\begin{equation}
\label{eq-19-f}
\begin{array}{lllll}
\hat q&=& 5s_3+s_4+(2+5m_3+m_4)e&=&5u+v+2e,
\\[4pt]
\hat q&=& s_3+4s_4+(1+m_3+4m_4)e&=&u+4v+e,
\\[4pt]
\hat q&=& s_4+3s_5+(m_4+3m_5)e&=&v+3w.
\end{array}
\end{equation}
Then $e=3v-4u$. If $u\ge 2$, then $e=3v-4u\le 3v-6$, and so $v\ge 3$.
Hence, $\hat q\ge 15$ and the group $\Cl(\hat X)$ is torsion free.
By Lemma \ref{lemma-c-Cl} we have $e\ge 3$. So $\hat q=19$, 
$e=3$, $s_3=0$, and $2=u=em_3\ge 3$, a contradiction.

Assume that $u=1$, then $e=3v-4$ and $v\ge 2$. 
Further, $\hat q=7v-3=v+3w\le 19$. We get
$\hat q =11$ and $e=2$. This contradicts Lemma \ref{lemma-c-Cl}.

Therefore, $u=0$. Then $e=3v$, $\hat q=7v=7$,
$v=1$, $e=3$, and $w=2$. By Lemma \ref{lemma-c-Cl}
the group $\Cl(\hat X)$ is torsion free.
Thus $s_3=0$, i.e., $\bar S_3$ is contracted,
$s_4=1$, $s_5=2$, and $m_5=\beta_5=0$.
This means, in particular, that $P_5\notin S_5$.
From the fourth relation in \eqref{eq-rel-Cl-19} we get $a_4=1$ and $s_7=1$.
In particular, $\dim |\Theta|>0$ and $\hat X\simeq \PP(1^2,2,3)$ by 
\ref{theorem-main-q7} of Theorem \ref{theorem-main}.
\end{case}
\end{proof}

\begin{lemma}
\label{lemma-s-CAP-s-CAP-s-19}
\begin{enumerate}
\item
$S_3\cap S_4$ is a reduced irreducible curve.
\item
$S_3\cap S_4\cap S_7=\{P_5\}$.
\end{enumerate}
\end{lemma}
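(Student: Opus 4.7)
The proof plan follows the template of Lemmas \ref{lemma-s-CAP-s-CAP-s-13} and \ref{lemma-s-CAP-s-CAP-s-17}. From Proposition \ref{proposition-q19} and the $\beta_k$--computation in its proof, we have $A^3 = 1/420$, the point $P_5$ lies in $S_3$, $S_4$, and $S_7$ but not in $S_5$, and the relevant triple intersections are
\[
S_3 \cdot S_4 \cdot S_5 = \tfrac{1}{7}, \qquad S_3 \cdot S_4 \cdot S_7 = \tfrac{1}{5}, \qquad S_3 \cdot S_5 \cdot S_7 = \tfrac{1}{4}.
\]
The local class of $A$ at $P_r$ (for $r = 3, 4, 5, 7$) is $2K_X, K_X, K_X, 4K_X$ respectively.

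For (i) I would write the effective $1$-cycle $S_3 \cdot S_4 = C + \Gamma$ singling out an irreducible component $C$ passing through $P_5$. Since $P_5 \notin S_5$, we have $C \not\subset S_5$; moreover any component of $S_3 \cap S_4$ through $P_5$ also passes through the common singular point $P_7$, as can be seen from the local structure of $S_3$ and $S_4$ at these cyclic quotients. Because $(S_5 \cdot C)_{P_7} \in \tfrac{1}{7}\mathbb{Z}_{>0}$, we get $S_5 \cdot C \geq 1/7$. The identity $S_5 \cdot C + S_5 \cdot \Gamma = 1/7$ together with $S_5 \cdot \Gamma > 0$ (ampleness of $S_5$) when $\Gamma \neq 0$ then forces $\Gamma = 0$, proving $C = S_3 \cap S_4$ is reduced and irreducible.

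For (ii) suppose $P \neq P_5$ also lies in $S_3 \cap S_4 \cap S_7$. Then $P \in C$, so $C \cap S_7 \supseteq \{P_5, P\}$. Since $(S_7 \cdot C)_{P_5} \in \tfrac{1}{5}\mathbb{Z}_{>0}$ and $(S_7 \cdot C)_P > 0$, if $C \not\subset S_7$ we would get $S_7 \cdot C > 1/5 = S_3 \cdot S_4 \cdot S_7$. Hence $C \subset S_7$, and I would write $S_3 \cdot S_7 = C + \Gamma'$. A second application of the same style of intersection bound, using $S_3 \cdot S_5 \cdot S_7 = 1/4$ and the lower estimate $S_5 \cdot C'' \geq 1/4$ for any hypothetical component $C'' \not\subset S_5$ of $\Gamma'$ meeting a suitable singular point in $S_5$, would force $\Supp \Gamma' \subset S_5$, and in particular $P_5 \notin \Gamma'$.

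The finish is then a Cartier-bookkeeping calculation exactly parallel to the $q = 13$ case: since $\lcm(3,4,7) = 84$ and the local classes of $A$ at $P_3, P_4, P_7$ are annihilated by the relevant multiples of $84$, the divisor $84 A$ is Cartier on $X \setminus \{P_5\}$, so $84 A \cdot \Gamma'$ is an integer. But
\[
84 A \cdot \Gamma' = 84 A \cdot S_3 \cdot (S_7 - S_4) = 84(3 \cdot 7 - 3 \cdot 4) A^3 = \frac{84 \cdot 9}{420} = \frac{9}{5},
\]
a contradiction. The main technical obstacle will be the intermediate step $\Supp \Gamma' \subset S_5$, which requires correctly identifying which singularities of $X$ lie on $C$ and on any potential extraneous component of $\Gamma'$, and choosing the right local lower bounds on the three-surface intersection numbers at those cyclic quotients.
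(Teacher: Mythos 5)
Your proposal is essentially the paper's own proof: part (i) is the same degree count against $S_3\cdot S_4\cdot S_5=1/7$ used for the $q=13$ case, and part (ii) follows the paper step for step, including $C\subset S_7$, the decomposition $S_3\cap S_7=C+\Gamma'$ with $\Supp\Gamma'\subset S_5$, and the final non-integrality $84A\cdot S_3\cdot(S_7-S_4)=9/5\notin\ZZ$. Two small corrections: in (i) you do not need (and have not justified) the claim that a component of $S_3\cap S_4$ through $P_5$ must pass through $P_7$ --- the bound $S_5\cdot C\ge 1/7$ already follows because $S_5$ is ample, $C\not\subset S_5$, and every local intersection multiplicity lies in $\frac1{r_P}\ZZ_{>0}$ with $r_P\le 7$. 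In (ii) the correct per-component lower bound is $S_5\cdot C''\ge 1/7$ (a component could meet $S_5$ only at $P_7$), not $1/4$; the contradiction is obtained by summing over the two components $C$ and $C''$, namely $1/4=S_3\cdot S_7\cdot S_5\ge S_5\cdot C+S_5\cdot C''\ge 2/7$, which is exactly how the paper closes this step.
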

\begin{proof}
(i) 
Similar to the proof of (i) of Lemma \ref{lemma-s-CAP-s-CAP-s-13}.

(ii) Put $C:=S_3\cap S_4$. 
Assume that $S_3\cap S_4\cap S_7\ni P\neq P_5$.
Since $1/5=S_3\cdot S_4\cdot S_7=S_7\cdot C$ and $P,\, P_5\in S_7\cap C$, we have 
$C\subset S_7$. 
If there is a component $C'\neq C$ of $S_3\cap S_7$ not contained in $S_5$, then, as above, 
$1/4=S_3\cdot S_7\cdot S_5\ge S_5\cdot C+S_5\cdot C'\ge 2/7$, a contradiction.
Thus we can write $S_3\cap S_7=C+\Gamma$, where 
$\Gamma$ is an effective $1$-cycle with $\Supp \Gamma \subset S_5$.
In particular, $P_5\notin S_5$.
The divisor $84A$ is Cartier at $P_3$, $P_4$, and $P_7$.
We get
\[
\frac {9}{5}= 84A\cdot S_3\cdot (S_7-S_4) =84A\cdot \Gamma\in \ZZ,
\]
a contradiction.
\end{proof}

Now the proof of \ref{theorem-main-19} of Theorem \ref{theorem-main}
can be finished similar to \ref{proof-theorem-13}.

\section{Toric Sarkisov links}
\begin{proposition}
Let $X$ be a toric $\QQ$-Fano threefold and let $P\in X$ be a 
cyclic quotient singularity of index $r$. 
Let $f\colon \tilde X\to X$ be the Kawamata blowup of $P\in X$.
Then a general member 
of $|-K_X|$ is a normal surface having at worst Du Val singularities.
The linear system $|-K_{X}|$ has only isolated base points.
In particular, $-K_{\tilde X}$ is nef and big.
The map $f\colon \tilde X\to X$ can be completed by 
a toric Sarkisov link \textup(cf. \eqref{eq3.1}\textup).
\end{proposition}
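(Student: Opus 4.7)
The plan is to reduce everything to the explicit finite list of toric $\QQ$-Fano threefolds from Proposition~\ref{proposition-main-toric} and to exploit the torus-equivariance of $|-K_X|$ throughout. The two Bertini-type claims would be handled first. Away from $\Sing(X)$, the usual Bertini theorem applied to the torus-moved linear system $|-K_X|$ gives smoothness of a general member $S$ on the smooth locus of $X$. At a cyclic quotient singularity $Q$ of index $r'$, I would pass to the index-one cover $\pi\colon (X^\sharp,Q^\sharp)\to (X,Q)$, on which $-K_X$ becomes Cartier and the lift $S^\sharp$ of a general $S$ is smooth by Bertini; adjunction $K_{S^\sharp}=(K_{X^\sharp}+S^\sharp)|_{S^\sharp}\sim 0$ forces the deck group $\muu_{r'}$ to act by $SL$-matrices on the tangent space $T_{Q^\sharp}S^\sharp$, so the quotient $S=S^\sharp/\muu_{r'}$ acquires a Du Val singularity at $Q$ and $S$ is normal. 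For the isolated-base-points statement, torus-equivariance forces $\Bs|-K_X|$ to be a union of closures of toric orbits; it suffices to show that no positive-dimensional one appears. For each $\PP(a_0,\ldots,a_3)$ from the list this is a short monomial check --- the weights in Proposition~\ref{proposition-main-toric} are pairwise coprime, so every coordinate $\PP^1$ carries a weighted monomial of degree $\sum a_i$ --- and the two remaining cases $\PP^3$ and $\PP^3/\muu_5(1,2,3)$ are classical.

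For the nef-and-big assertion I would use Theorem~\ref{theorem-Kawamata-blowup}, which gives $K_{\tilde X}=f^*K_X+\tfrac 1r E$. Since $|-K_X|$ has only isolated base points and $\dim|-K_X|\ge 1$ in every case of the list, a general $S\in|-K_X|$ passing through $P$ has $\ord_E(f^*S)=1/r$, the minimum compatible with $S$ being an integral Weil divisor through $P$. Then $\tilde S=f^*S-\tfrac 1r E$ is an effective member of $|-K_{\tilde X}|$, and bigness follows from $(-K_{\tilde X})^3=(-K_X)^3-r^{-3}E^3>0$ by direct computation. Nefness is most transparent on the fan of $\tilde X$: the Kawamata blowup is a toric weighted blowup, the fan of $\tilde X$ is the star subdivision of the fan of $X$ along the ray $\tfrac 1r(1,a,r-a)$, and nefness of $-K_{\tilde X}$ reduces to $-K_{\tilde X}\cdot C\ge 0$ for each torus-invariant curve $C$, which is a short combinatorial check on the new rays.

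Finally, since the Kawamata blowup of a toric cyclic quotient singularity is itself toric, $\tilde X$ is toric; the toric MMP on $\tilde X$ runs unconditionally on the fan and always terminates, producing a chain of toric flips $\tilde X\dashrightarrow\bar X$ followed by a toric extremal contraction $g\colon\bar X\to\hat X$, which is the required toric Sarkisov link as in~\eqref{eq3.1}. The main obstacle in this plan is the uniform verification of nefness in the middle step: in the most delicate cases such as $\PP(3,4,5,7)$ blown up at its index-$7$ point, all four coordinate points are simultaneously base points of $|-K_X|$, and the combinatorial check on the blown-up fan, though mechanical, must be carried out separately for each pair consisting of a toric variety from the list and a non-Gorenstein point on it.
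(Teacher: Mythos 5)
Your overall strategy coincides with the paper's: both reduce to the finite list of Proposition~\ref{proposition-main-toric} and verify the claims by explicit computation on each weighted projective space. The noteworthy divergence is in the nef step, which you single out as the main burden. The paper avoids the case-by-case fan computation entirely: once one checks that a general $S\in|-K_X|$ has $\ord_E(S)=1/r$ at $P$, the Kawamata blowup formula gives $K_{\tilde X}+\tilde S=f^*(K_X+S)\sim 0$, so $\tilde S$ is an \emph{anticanonical} member on $\tilde X$; since $|-K_{\tilde X}|\supset f^{-1}_*|-K_X|$ then has only isolated base points away from $E$ (and $-E|_E$ is ample on the exceptional $\PP(1,a,r-a)$), nefness of $-K_{\tilde X}$ follows at once, with bigness from $(-K_{\tilde X})^3>0$ exactly as you propose. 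Your route via torus-invariant curves on the star subdivision is correct but substantially more laborious, and it duplicates information already contained in the monomial analysis of $|-K_X|$. Your Du Val argument (index-one cover, adjunction, $SL_2$-action) is a legitimate alternative to the paper's shortcut ``$S$ has quotient singularities and $K_S$ is Cartier, hence Du Val.''

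One step as written does not hold up: you claim the lift $S^\sharp$ of a general $S$ to the index-one cover is ``smooth by Bertini.'' Bertini gives nothing at base points of the linear system, and in the relevant cases the non-Gorenstein points \emph{are} base points of $|-K_X|$ --- for $\PP(3,4,5,7)$ the base locus is precisely the four coordinate points, so every member passes through them. Smoothness (quasi-smoothness) of the general member at these points must instead be read off from the same list of monomials you invoke for the base-locus computation, by checking that the local equation at each coordinate point contains a linear term in the orbifold coordinates (e.g.\ $x_5x_7^2$ at $P_7$). The same explicit check is what justifies $\ord_E(f^*S)=1/r$, which you assert as ``the minimum compatible with $S$ being an integral Weil divisor through $P$'' but which in principle could fail if all monomials through $P$ had higher weighted multiplicity. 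Both gaps are repaired by the computation you already plan to perform, but they are not consequences of Bertini or of minimality alone.
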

\begin{proof}
This can be shown by explicit computations in all cases
of Proposition \ref{proposition-main-toric}. Consider, for example, the case
$X=\PP(3,4,5,7)$. Let $x_3$, $x_4$, $x_5$, $x_7$ be quasi-homogeneous coordinates in 
$\PP(3,4,5,7)$. A section $S\in |-K_X|$ is given by a quasi-homogeneous polynomial
of degree $19$. By taking this polynomial as a general linear combination of 
$x_3^5 x_4$,  $x_3^3x_5^2$, $x_3^4 x_7$, 
$x_4 x_5^3$, $x_4^3 x_7$,  
$x_5x_7^2$ we see that the base locus of $|-K_X|$ is the union of 
four coordinate points and the surface $S$ has only quotient singularities.
Since $K_S$ is Cartier, the singularities of $S$ are Du Val.
Further, we can write $K_{\tilde X}+\tilde S=f^*(K_X+S)\sim 0$,
where $\tilde S$ is the proper transform of $S$.
Hence, $\tilde S\in |-K_{\tilde S}|$ and the linear system 
$|-K_{\tilde X}|$ has only isolated base points
outside of $f^{-1}(P)$. In particular, $-K_{\tilde X}$ is nef.
It is easy to check that $-K_{\tilde X}^3>0$,
i.e., $-K_{\tilde X}$ is  big. Recall that $\rho(\tilde X)=2$. 
So, the Mori cone $\operatorname{NE}(\tilde X)$ has exactly two extremal rays,
say $R_1$ and $R_2$. Let $R_1$ is generated by $f$-exceptional curves. 
If $-K_{\tilde X}$ is ample, we run the MMP starting from $R_2$.
Otherwise we make a flop in $R_2$ and run the MMP.
Clearly, we obtain Sarkisov link \eqref{eq3.1}.
\end{proof}
Explicitly, for weighted projective spaces from Proposition \ref{proposition-main-toric},
we have the following diagram of Sarkisov links.
Here an arrow $X_1 \stackrel{\frac1r}{\longrightarrow} X_2$ indicates that 
there is Sarkisov link described above that starts from Kawamata blowup of 
a cyclic quotient singularity of index $r>1$ on $X_1$ and the target variety is 
$X_2$. 
\[
\xymatrix@R=3pc@C=7pc{
\\
\text{$\PP^1$-bundle}&
\PP(1,2,3,5)
\ar[dr]^{\frac13}
\ar[dl]_{\frac12}
\ar[r]^{\frac15}&
\text{\begin{tabular}{c} $\QQ$-conic \\ bundle\end{tabular}}
\\
\PP(1^3,2)
\ar[u]^{\frac12}
&
\PP(3,4,5,7)
\ar[r]|-{\frac15}
\ar@/_1.7pc/[ddl]|-<(.6){\frac17}
\ar@/^1.5pc/[ddl]|-<(.3){\frac14}
\ar[u]^{\frac13}
&
\PP(1^2,2,3)
\ar[u]^{\frac13}
\ar@/^0.7pc/[dd]|-{\frac12}
\\
\PP^3 
&
&
\\
\PP(1,3,4,5)
\ar@/^1.4pc/[uu]^{\frac15}
\ar@/^2pc/[rr]|-{\frac14}
\ar@/_0.4pc/[u]|-{\frac13}
&
\PP(2,3,5,7)
\ar@/^1.4pc/[l]|-{\frac15}
\ar@/_1.4pc/[uul]|-<(.8){\frac13}
\ar@/^1.4pc/[uur]|-{\frac12}
\ar@/_1.4pc/[uur]|-{\frac17}
&
\text{\begin{tabular}{c} del Pezzo \\ fibration\end{tabular}}&
} 
\]


\end{document}